\newcommand{\R}{\ensuremath{\mathbb{R}}}
\newcommand{\C}{\ensuremath{\mathcal{C}}}
\newcommand{\E}{\ensuremath{\mathbb{E}}}
\newcommand{\W}[2]{\ensuremath{W^{(#1)}(#2)}}
\newcommand{\dW}[2]{\ensuremath{W^{(#1)\prime}(#2)}}
\newcommand{\abs}[1]{\ensuremath{\left|#1\right|}}
\newcommand{\ZZ}[3]{\ensuremath{Z_{#3}^{(#1)}(#2)}}
\newcommand{\WW}[3]{\ensuremath{W_{#3}^{(#1)}(#2)}}
\newcommand{\dWW}[3]{\ensuremath{W_{#3}^{(#1)\prime}(#2)}}
\renewcommand{\P}{\ensuremath{\mathbb{P}}}
\newtheorem{theorem}{Theorem}[section]
\newtheorem{lem}[theorem]{Lemma}
\newtheorem{prop}[theorem]{Proposition}
\newtheorem{cor}[theorem]{Corollary}
\newtheorem{rem}[theorem]{Remark}
\title{Optimal prediction for positive self-similar Markov processes}
\author{Erik Baurdoux\thanks{Department of Statistics, LSE,
Houghton Street,
London, WC2A 2AE, UK. Email: \texttt{e.j.baurdoux@lse.ac.uk}},
\, Andreas E. Kyprianou\thanks{Corresponding author. Department of Mathematical Sciences, University of Bath, Claverton Down, Bath, BA2 7AY, UK. Email: \texttt{a.kyprianou@bath.ac.uk}} 
\, and 
Curdin Ott\thanks{Email: \texttt{curdin.ott@bluewin.ch}}
}
\begin{document}

\maketitle

\begin{abstract}
This paper addresses the question of predicting when a positive self-similar Markov process $X$ attains its pathwise global supremum or infimum before hitting zero for the first time (if it does at all). This problem has been studied in~\cite{GloHulPes} under the assumption that $X$ is a positive transient diffusion. We extend their result to the class of positive self-similar Markov processes by establishing a link to~\cite{baur_schaik}, where the same question is studied for a L\'evy process drifting to $-\infty$. The connection to~\cite{baur_schaik}  relies on the so-called Lamperti transformation ~\cite{lamperti} which links the class of positive self-similar Markov processes with that of L\'evy processes. 
Our approach will reveal that the results in~\cite{GloHulPes} for Bessel processes can also be seen as a consequence of self-similarity.

\medskip

\noindent {\bf Key words:} Optimal stopping, optimal prediction, positive self-similar Markov processes.

\medskip

\noindent {\bf Mathematics Subject Classification:} Primary 60G40, Secondary 60G51, 60J75
\end{abstract}

\section{Introduction}\label{c5_introduction}
In keeping with the development of a family of prediction problems for Brownian motion and, more generally, L\'evy processes, cf. \cite{GPS, dTP, GloHulPes, baur_schaik} to name but a few, we address the question of predicting the time when a positive self-similar Markov process (pssMp) attains its pathwise global supremum or infimum. Aside from the embedding of this problem within the general theory of optimal stopping, the interest and novelty in the current setting is to show that, in contrast to the approach in~\cite{GloHulPes} for self-similar diffusions,  the problem can reduced via time-change to a more homogenous setting.

We shall spend some time to set up some notation in order to formulate the problem rigorously.
A positive self-similar Markov process $X=\{X_t:t\geq 0\}$ with self-similarity index $\alpha>0$ is a $[0,\infty)$-valued standard Markov process defined on a filtered probability space \mbox{$(\Omega,\mathcal{G}, \mathbb{G}:=\{\mathcal{G}_t:t\geq 0\},\{P_x:x>0\}$)}, which has $0$ as an absorbing state and which satisfies the scaling property: for every $x,c>0$,
\begin{equation*}
\text{the law of }\{cX_{c^{-\alpha}t}:t\geq 0\}\text{ under $P_x$ is equal to the law of $X$ under $P_{cx}$}.
\end{equation*}
Here, we mean ``standard'' in the sense that $\mathbb{G}$ satisfies the natural conditions (cf.~\cite{bichteler}, Section 1.3, page 39) and $X$ is strong Markov with c\`adl\`ag and quasi-left-continuous paths. Lamperti~\cite{lamperti} proved in a seminal paper that the family of pssMp splits into three exhaustive classes which can be distinguished from each other by comparing their hitting time of $0$, that is, $\zeta:=\inf\{t>0:X_t=0\}$. The classification reads as follows:
{\renewcommand{\theenumi}{\roman{enumi}}
\renewcommand{\labelenumi}{(\theenumi)}
\begin{enumerate}
\item\label{c5_type1} $P_x[\zeta=\infty]=1$ for all starting points $x>0$,
\item\label{c5_type2} $P_x[\zeta<\infty, X_{\zeta-}=0]=1$ for all starting points $x>0$,
\item\label{c5_type3} $P_x[\zeta<\infty, X_{\zeta-}>0]=1$ for all starting points $x>0$.
\end{enumerate}}
\noindent In other words, a pssMp $X$ starting at $x>0$ either never hits zero, hits zero continuously or hits zero by jumping onto it. The two subclasses of pssMps that are used here are
\begin{eqnarray*}
\mathcal{C}&:=&\big\{X\text{ is spectrally negative with non-monotone paths and}\\
&&\;\;\text{either of type~\eqref{c5_type2} or~\eqref{c5_type3}}\big\},\label{c5_def_class}\\
\hat{\mathcal{C}}&:=&\big\{X\text{ is spectrally positive with non-monotone paths and}\\
&&\;\;\text{either of type~\eqref{c5_type1} and drifting to $\infty$ or of type~\eqref{c5_type3}}\big\}.\notag
\end{eqnarray*}
By spectrally negative and spectrally positive we mean that the trajectories of $X$ only have downward or upward jumps respectively.\\
\indent One of the aims here is to answer the following question: Given $X\in\mathcal{C}$, is it possible to stop ``as close as possible'' to the time at which $X$ ``attains'' its supremum? In more mathematical terms, define
\begin{equation*}
\Theta:=\sup\{t\geq 0:X_t=\overline X_\zeta\}=\sup\{0\leq t<\zeta:X_t=\overline X_\zeta\},
\end{equation*}
where $\overline X=\{\overline X_t:t\geq 0\}$ is the running maximum process $\overline X_t:=\sup_{0\leq u\leq t}X_u$, $t\geq 0$. By definition of $\mathcal{C}$, it follows that the set $\{t\geq 0: X_t=\overline X_\zeta\}$ is a singleton; see Subsection~\ref{c5_Lamperti} for details. We are interested in the optimal stopping problem 
\begin{equation}
\inf_{\tau}E_x[\vert\Theta-\tau\vert-\Theta],\label{c5_problem1}
\end{equation}
where $x>0$ and the infimum is taken over a certain set of $\mathbb{G}$-stopping times $\tau$ which is specified later. The term ``attains'' is used in a loose sense here. Indeed, if $X$ has negative jumps it might happen that the supremum is never attained. However, the above definition ensures that we have $X_{\Theta}=\overline X_\zeta$ on the event $\{X_\Theta\geq X_{\Theta -}\}$ while $X_{\Theta -}=\overline X_\zeta$ on the event $\{X_\Theta< X_{\Theta -}\}$.\\
\indent Analogously, one may try to stop ``as close as possible'' to the time at which a process $X\in\hat{\mathcal{C}}$ ``attains'' its infimum before hitting zero (if at all). To this end, let
\begin{equation*}
\hat\Theta:=\sup\{0\leq t<\zeta:X_t=\underline X_{\zeta-}\},
\end{equation*}
where $\underline X=\{\underline X_t:t\geq 0\}$ the running minimum process $\underline X_t:=\inf_{0\leq u\leq t}X_u$, $t\geq 0$. Again, by definition of $\hat{\mathcal{C}}$, the set $\{0\leq t<\zeta:X_t=\underline X_{\zeta-}\}$ a singleton; see Subsection~\ref{c5_Lamperti} for details. If $X$ has positive jumps, the word ``attains'' is used in a loose sense analogously to above. Stopping as close as possible to~$\hat\Theta$ then leads to solving the optimal stopping problem
\begin{equation}
\inf_{\tau}E_x[\vert\hat\Theta-\tau\vert-\hat\Theta],\label{c5_problem2}
\end{equation}
where $x>0$ and the infimum is taken over a certain set of $\mathbb{G}$-stopping times $\tau$ which is specified later.\\
\indent Our interest in~\eqref{c5_problem1} and~\eqref{c5_problem2} was raised thanks  to~\cite{GloHulPes}, where the authors solve~\eqref{c5_problem2} under the assumption that $X$ is a diffusion in $(0,\infty)$ such that $\lim_{t\to\infty}X_t=\infty$. Their result states that the optimal stopping time is given by
\begin{equation}
\rho^*_1=\inf\{t\geq 0:X_t\geq f^*(\underline X_t)\},\label{c5_intro_stopping_time}
\end{equation}
where $f^*$ is the minimal solution to a certain differential equation. In particular, when $X$ is a $d$-dimensional Bessel process with $d>2$, it is shown that $f^*(z)=\lambda^*_1 z$, $z\geq 0$, for some constant $\lambda_1^*>1$, which is a root of some polynomial. Due to the fact that the class of Bessel processes for $d>2$ belongs to the class of pssMps with $\alpha=2$, it is possible to express the optimal stopping time~\eqref{c5_intro_stopping_time} (up to a time-change) in terms of the underlying Lamperti representation $\xi$ (of $X$) reflected at its infimum. This raises the suspicion that the simple form of~\eqref{c5_intro_stopping_time} in the Bessel case could be a consequence of the self-similarity of $X$ and suggests that~\eqref{c5_problem2} (or an analogue of it) can also be solved for the class of pssMps.\\
\indent In this paper we show that the speculations in the previous paragraph are indeed true. Specifically, we prove that the optimal stopping times in~\eqref{c5_problem1} and~\eqref{c5_problem2} are of the simple form
\begin{equation*}
\tau^*=\inf\{t\geq 0:X_t\geq K^*\overline X_t\}\quad\text{and}\quad\hat \tau^*=\inf\{t\geq 0:X_t\leq\hat K^*\underline X_t\}
\end{equation*} 
for some constants $0<K^*<1$ and $\hat K^*>1$ respectively. As alluded to above, the key step is to reduce~\eqref{c5_problem1} and~\eqref{c5_problem2} to a one-dimensional problem with the help of the so-called Lamperti transformation~\cite{lamperti} which links pssMps to L\'evy processes.\\
\section{Preliminaries}
\subsection{Killed L\'evy processes}
A process $\xi$ with values in $\R\cup\{-\infty\}$ is called a L\'evy process killed at rate $q\geq 0$ if $\xi$ starts at $0$, has stationary and independent increments and $\mathbf{k}:=\inf\{t>0:\xi_t=-\infty\}$ has an exponential distribution with parameter $q\geq 0$. In the case $q=0$ it is understood that $\P[\mathbf{k}=\infty]=1$, that is, no killing. It is well known that a L\'evy process $X$ killed at rate $q$ is characterised by its L\'evy triplet $(\gamma,\sigma,\Pi)$ and the killing rate~$q$, where $\sigma\geq0, \gamma\in\R$ and $\Pi$ is a measure on $\R$ satisfying the condition $\int_{\R}(1\wedge x^2)\,\Pi({\rm d}x)<\infty$. The Laplace exponent of $\xi$ under $\P$ is defined by
\begin{equation*}
\psi(\theta):=\log(\E[{\rm e}^{\theta \xi_1}])
\end{equation*}
for any $\theta\in\R$ such that $\psi(\theta)<\infty$. It is known that (cf.~Theorem 3.6 in~\cite{kyprianou}), \mbox{for $\theta\in\R$},
\begin{equation}
\E[{\rm e}^{\theta \xi_t}]<\infty\text{ for all $t\geq 0$}\qquad\Longleftrightarrow\qquad \int_{\abs{x}\geq1}{\rm e}^{\theta x}\,\Pi({\rm d}x)<\infty,\label{c5_crit}
\end{equation}
and in this case we have
\begin{equation}
\psi(\theta)=-q-\gamma\theta+\frac{1}{2}\sigma^2\theta^2+\int_{\R}\big({\rm e}^{\theta x}-1-\theta x1_{\{\abs{x}<1\}}\big)\,\Pi({\rm d}x).\label{c5_l_exponent}
\end{equation}
In particular, if $\xi$ is of bounded variation,~\eqref{c5_l_exponent} may be written as
\begin{equation*}
\psi(\theta)=-q +\mathtt{d}\theta-\int_{\R}(1-{\rm e}^{\theta x})\,\Pi({\rm d}x)
\end{equation*} 
for some $\mathtt{d}\in\R$.\\
\indent Finally, for any killed L\'evy process (starting at zero) and any $v\in\R$ with $\psi(v)<\infty$ the process
\begin{equation*}
\exp(v\xi_t-\psi(v)t)1_{\{t<\mathbf{k}\}},\qquad t\geq 0,
\end{equation*}
is a $\P$-martingale. Hence, we may further define the family of measures $\{\P^v\}$ with Ra\-don-Ni\-ko\-dym derivatives
\begin{equation}
\frac{{\rm d}\P^v}{{\rm d}\P}\bigg\vert_{\mathcal{F}_t}=\exp(v\xi_t-\psi(v)t)1_{\{t<\mathbf{k}\}},\label{c5_changeofmeasure}
\end{equation}
where $\{\mathcal{F}_t : t\geq 0\}$ is the natural filtration associated to $\xi$.
In particular, under $\P^v$ the process $\xi$ is a L\'evy process and its Laplace exponent is given by $\psi_v(\theta)=\psi(v+\theta)-\psi(v)$ and infinite lifetime, that is, $\P^v[\mathbf{k}=\infty]=1$; cf.~Theorem 3.9 in~\cite{kyprianou}.

\subsection{Scale functions}
We suppose throughout this subsection that $\xi$ is an unkilled spectrally negative L\'evy process ($q=0$). Spectrally negative means that $\Pi$ is concentrated on $(-\infty,0)$ and thus $\xi$ only exhibits downward jumps. Observe that in this case, the Laplace exponent $\psi(\theta)$ exists at least for $\theta\geq 0$ by~\eqref{c5_crit}. Its right-inverse is defined by
\begin{equation*}
\Phi(\lambda):=\sup\{\theta\geq 0:\psi(\theta)=\lambda\},\qquad\lambda\geq 0.
\end{equation*} 
 A special family of functions associated with unkilled spectrally negative L\'evy processes is that of scale functions (cf.~\cite{KuzKypRiv,kyprianou}) which are defined as follows. For $\eta\geq 0$, the $\eta$-scale function $W^{(\eta)}:\R\to[0,\infty)$ is the unique function whose restriction to $(0,\infty)$ is continuous and has Laplace transform
\begin{equation*}
\int_0^\infty {\rm e}^{-\theta x}\W{\eta}{x}\,{\rm d}x=\frac{1}{\psi(\theta)-\eta},\quad\theta>\Phi(\eta),
\end{equation*}
and is defined to be identically zero for $x\leq 0$. Further, we shall use the notation $\WW{\eta}{x}{v}$ to mean the $\eta$-scale function associated to $X$ under $\P^v$. For fixed $x\geq 0$, it is also possible to analytically extend $\eta\mapsto\W{\eta}{x}$ to $\eta\in\mathbb{C}$. A useful relation that links the different scale functions is (cf. Lemma 3.7 in~\cite{KuzKypRiv})
\begin{equation}
\W{\eta}{x}={\rm e}^{vx}\WW{\eta-\psi(v)}{x}{v}\label{c5_scale5}
\end{equation}
for $v\in\R$ such that $\psi(v)<\infty$ and $\eta\in\mathbb{C}$. Moreover, the following regularity properties of scale functions are known; cf. Sections 2.3 and 3.1 of~\cite{KuzKypRiv}.\\

\noindent\textsl{Smoothness}: For all $\eta\geq 0$, $W^{(\eta)}$ is Lebesgue-almost everywhere differentiable. Moreover, 
\begin{equation}\label{c5_smoothness}
W^{(\eta)}\vert_{(0,\infty)}\in\begin{cases}
C^1(0,\infty),&\text{if $X$ is of bounded variation and $\Pi$ has no atoms},\\C^1(0,\infty),&\text{if $X$ is of unbounded variation and $\sigma=0$},\\C^2(0,\infty),&\text{$\sigma>0$}.
\end{cases}
\end{equation}
\\
\textsl{Continuity at the origin:} For all $\eta\geq 0$,
\begin{equation}\label{c5_continuityatorigin}
\W{q}{0+}=\begin{cases}\mathtt{d}^{-1},&\text{if $X$ is of bounded variation,}\\0,&\text{if $X$ is of unbounded variation.}
\end{cases}
\end{equation}
\\
\textsl{Right-derivative at the origin:} For all $q\geq 0$,
\begin{equation}\label{c5_derivativeatorigin}
W^{(q)\prime}_+(0+)=\begin{cases}
\frac{q+\Pi(-\infty,0)}{\mathtt{d}^2},&\text{if $\sigma=0$ and $\Pi(-\infty,0)<\infty$,}\\
\frac{2}{\sigma^2},&\text{if $\sigma>0$ or $\Pi(-\infty,0)=\infty$,}\
\end{cases}
\end{equation}
where we understand the second case to be $+\infty$ when $\sigma=0$.

The second scale function is $Z_v^{(\eta)}$ and defined as follows. For $v\in\R$ such that $\psi(v)<\infty$ and $\eta\geq 0$ we define $Z_v^{(\eta)}:\R\longrightarrow[1,\infty)$ by
\begin{equation}
\ZZ{\eta}{x}{v}=1+\eta\int_0^x\WW{\eta}{z}{v}\,{\rm d}z.\label{c5_def_Z}
\end{equation}

\subsection{The Lamperti transformation}\label{c5_Lamperti}
Lamperti's main result in~\cite{lamperti} asserts that any pssMp $X$ may, up to its first hitting time of zero, be expressed as the exponential of a time-changed L\'evy process. We will now explain this in more detail. Instead of writing $(X,P_x)$ to denote the positive self-similar Markov process starting at $x>0$, we shall sometimes write $X^{(x)}=\{X_t^{(x)}:t\geq 0\}$  in order to emphasise the dependency of the path on its initial value. Similarly, we write $\zeta^{(x)}=\inf\{t>0:X_t^{(x)}=0\}$.

For fixed $x>0$ define
\begin{equation*}
\varphi(t):=\int_0^{x^\alpha t}(X^{(x)}_s)^{-\alpha}\,{\rm d}s,\qquad t<x^{-\alpha}\zeta^{(x)}.
\end{equation*}
It will be important to understand the behaviour of $\varphi(x^{-\alpha}\zeta-):=\lim_{t\uparrow\zeta}\varphi(x^{-\alpha}t)$. In particular, note that the distribution of $\varphi(x^{-\alpha}\zeta-)$ does not depend on $x>0$. Moreover, the following result is known; see Lemma 13.3 in~\cite{kyprianou}.
\begin{lem}
In the case that $\zeta=\infty$ or that $\{\zeta<\infty\text{ and }X_{\zeta -}=0\}$, we have $P_x[\varphi(x^{-\alpha}\zeta-)=\infty]=1$, for all $x>0$. In the case that $\zeta<\infty$ and $X_{\zeta-}>0$, we have that, under $P_x$, $\varphi(x^{-\alpha}\zeta-)$ is exponentially distributed with a parameter that does not depend on the value of $x>0$.
\end{lem}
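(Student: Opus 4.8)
The plan is to read the result off directly from Lamperti's representation theorem~\cite{lamperti}. Recall that, associated to the pssMp $X$, there is a single L\'evy process $\xi$, killed at some rate $q\geq 0$ with lifetime $\mathbf{k}=\inf\{t>0:\xi_t=-\infty\}$ and not depending on the starting point of $X$, such that for every $x>0$
\[
X^{(x)}_t=x\exp\bigl(\xi_{\tau(t)}\bigr),\qquad 0\leq t<\zeta^{(x)},
\]
where $\tau$ is the inverse of the continuous, strictly increasing additive functional $I_u:=x^{\alpha}\int_0^u{\rm e}^{\alpha\xi_s}\,{\rm d}s$, $u<\mathbf{k}$, and $\zeta^{(x)}=\lim_{u\uparrow\mathbf{k}}I_u=x^{\alpha}\int_0^{\mathbf{k}}{\rm e}^{\alpha\xi_s}\,{\rm d}s$. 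Since $\varphi(t)=\int_0^{x^{\alpha}t}(X^{(x)}_s)^{-\alpha}\,{\rm d}s$, letting $t\uparrow x^{-\alpha}\zeta$ gives $\varphi(x^{-\alpha}\zeta-)=\int_0^{\zeta}(X^{(x)}_s)^{-\alpha}\,{\rm d}s$, and I would identify this quantity with $\mathbf{k}$.

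The crucial step is the change of variables $s=I_u$ in this last integral. Using ${\rm d}s=x^{\alpha}{\rm e}^{\alpha\xi_u}\,{\rm d}u$, the identity $X^{(x)}_{I_u}=x{\rm e}^{\xi_u}$ (which follows from $\tau(I_u)=u$), and the fact that $s\uparrow\zeta$ corresponds to $u\uparrow\mathbf{k}$, the integrand collapses to $1$:
\[
\varphi(x^{-\alpha}\zeta-)=\int_0^{\zeta}(X^{(x)}_s)^{-\alpha}\,{\rm d}s=\int_0^{\mathbf{k}}\bigl(x{\rm e}^{\xi_u}\bigr)^{-\alpha}x^{\alpha}{\rm e}^{\alpha\xi_u}\,{\rm d}u=\int_0^{\mathbf{k}}{\rm d}u=\mathbf{k},\qquad P_x\text{-a.s.}
\]
The same substitution also gives $\varphi(x^{-\alpha}r)=\tau(r)<\mathbf{k}$ for $r<\zeta$, so that $\varphi$ is finite on $[0,x^{-\alpha}\zeta)$ and the statement is not vacuous.

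The conclusion is then immediate from Lamperti's trichotomy. Cases~\eqref{c5_type1} and~\eqref{c5_type2} --- that is, $\zeta=\infty$, or $\zeta<\infty$ with $X_{\zeta-}=0$ --- are precisely the cases in which $\xi$ is unkilled, $q=0$; then $\mathbf{k}=\infty$ almost surely and $P_x[\varphi(x^{-\alpha}\zeta-)=\infty]=1$. Case~\eqref{c5_type3}, $\zeta<\infty$ with $X_{\zeta-}>0$, is exactly the case $q>0$, in which $\mathbf{k}$ is exponentially distributed with parameter $q$; since $\xi$, hence $q$, is intrinsic to $X$ and involves no dependence on the starting point, the parameter does not depend on $x>0$.

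I expect the only genuinely delicate point to be making the change-of-variables step watertight at the endpoint of the lifetime: one must confirm that $I$ is continuous and strictly increasing on $[0,\mathbf{k})$ (clear, since ${\rm e}^{\alpha\xi_s}>0$ for $s<\mathbf{k}$), that $\tau$ is honestly its right-continuous inverse so that $\tau(I_u)=u$, and that $I_u\uparrow\zeta$ as $u\uparrow\mathbf{k}$. It is worth keeping an eye on case~\eqref{c5_type2}, where $\mathbf{k}=\infty$ while $\zeta$ may be finite: this is consistent because $\int_0^{\infty}{\rm e}^{\alpha\xi_s}\,{\rm d}s<\infty$ when $\xi$ drifts to $-\infty$, and the identity $\varphi(x^{-\alpha}\zeta-)=\mathbf{k}=\infty$ still holds. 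Everything else is routine bookkeeping with the known classification of pssMps.
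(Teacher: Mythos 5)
Your computation is correct: the substitution $s=I_u$ does collapse $\int_0^{\zeta}(X^{(x)}_s)^{-\alpha}\,{\rm d}s$ to the lifetime $\mathbf{k}$ of the underlying L\'evy process, and the trichotomy then sorts out the two cases, with the $x$-independence of the parameter coming from the fact that the law of $\xi$ does not depend on the starting point. Note, however, that the paper offers no proof of this lemma at all --- it simply cites Lemma 13.3 of \cite{kyprianou} --- and your route inverts the logical order used both here and in that reference. The lemma is normally \emph{prior} to the Lamperti representation: the random time $\mathbf{e}$, whose law the lemma describes, is exactly what is used to define the killed L\'evy process $\xi_t=\log(X_{x^\alpha I_t}/x)$, $t<\mathbf{e}$, and hence to prove Proposition~\ref{c5_Lamp}. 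Deducing the lemma from the full representation theorem is therefore circular within this development, although it is not mathematically wrong if one is content to invoke Lamperti's theorem as an external black box. The self-contained proof avoids $\xi$ entirely: set $A:=\int_0^{\zeta}(X^{(x)}_s)^{-\alpha}\,{\rm d}s=\varphi(x^{-\alpha}\zeta-)$; the scaling property shows the law of $A$ under $P_x$ is independent of $x$; and the strong Markov property applied at the stopping time $x^{\alpha}I_t$ on the event $\{A>t\}$, combined again with scaling, gives $P_x[A>t+s]=P_x[A>t]\,P_x[A>s]$. Lack of memory forces $A$ to be either almost surely infinite or exponentially distributed with a strictly positive, $x$-independent parameter, and a short case analysis (for instance, in case~\eqref{c5_type3} the c\`adl\`ag path is bounded away from zero on $[0,\zeta)$, so $A\leq\zeta\cdot\sup_{s<\zeta}(X^{(x)}_s)^{-\alpha}<\infty$) identifies which alternative occurs in which of the three classes. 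If you want your write-up to stand on its own rather than lean on the representation theorem, that is the argument to give; what you have written is best regarded as a consistency check against Lamperti's theorem rather than an independent proof.
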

As the distribution of $\varphi(x^{-\alpha}\zeta^{(x)}-)$ is independent of $x$, we will rename it~$\mathbf{e}$. When $\mathbf{e}=\infty$ almost surely we interpret it as an exponential distribution with parameter zero. Now define the right-inverse of $\varphi$,
\begin{equation*}
I_u:=\inf\{0<t<x^{-\alpha}\zeta^{(x)}:\varphi(t)>u\},\quad u\geq 0.
\end{equation*}
Moreover, define the process $\xi:=\{\xi_t:t\geq 0\}$ by setting, for $x>0$,
\begin{equation*}
\xi_t:=\log(X_{x^\alpha I_t}/x),\qquad 0\leq t<\mathbf{e}
\end{equation*}
and $\xi_t=-\infty$ for $t\geq\mathbf{e}$ (in the case that $\mathbf{e}<\infty$).
The main result in~\cite{lamperti} states that a pssMp is nothing else than a space and time-changed killed L\'evy process.
\begin{prop}[Lamperti transformation]\label{c5_Lamp}
If $X^{(x)}$, $x>0$, is a positive self-similar Markov process with index of self-similarity $\alpha>0$, then it can be represented as
\begin{equation*}
X^{(x)}_t=x\exp(\xi_{\varphi(x^{-\alpha}t)}),\qquad t\geq 0,
\end{equation*}
and either
{\renewcommand{\theenumi}{\roman{enumi}}
\renewcommand{\labelenumi}{(\theenumi)}
\begin{enumerate}
\item $\zeta^{(x)}=\infty$ almost surely for all $x>0$, in which case $\xi$ is an unkilled L\'evy process satisfying $\limsup_{t\uparrow\infty}\xi_t=\infty$, or
\item $\zeta^{(x)}<\infty, X^{(x)}_{\zeta^{(x)}-}=0$ almost surely for all $x>0$, in which case $\xi$ is an unkilled L\'evy process satisfying $\lim_{t\uparrow\infty}\xi_t=-\infty$, or
\item $\zeta^{(x)}<\infty, X^{(x)}_{\zeta^{(x)}-}>0$ almost surely for all $x>0$, in which case $\xi$ is a killed L\'evy process.
\end{enumerate}}
Also note that we may identify
\begin{equation*}
I_t=\int_0^t{\rm e}^{\alpha\xi_s}\,{\rm d}s,\qquad t<\mathbf{e}.
\end{equation*}
\end{prop}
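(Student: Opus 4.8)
The plan is to reconstruct Lamperti's original argument; its heart is the verification that the time-changed, log-transformed process $\xi$ has stationary and independent increments, after which the lifetime of $\xi$ is identified with $\mathbf{e}=\varphi(x^{-\alpha}\zeta^{(x)}-)$ and the trichotomy follows from the lemma quoted just above. I would begin with the elementary properties of the time change. Since $X^{(x)}$ is c\`adl\`ag and strictly positive on $[0,\zeta^{(x)})$ with $\inf_{s\le r}X^{(x)}_s>0$ for every $r<\zeta^{(x)}$, the map $t\mapsto\varphi(t)$ is finite, continuous and strictly increasing on $[0,x^{-\alpha}\zeta^{(x)})$, hence has a continuous strictly increasing inverse, namely $I$; moreover $\{x^\alpha I_u<r\}=\{\varphi(r)>u\}\in\mathcal{G}_r$, so $x^\alpha I_u$ is a $\mathbb{G}$-stopping time for each $u\ge0$. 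Differentiating $\varphi(I_u)=u$ and using $\varphi'(t)=x^\alpha(X^{(x)}_{x^\alpha t})^{-\alpha}$ gives $I'_u=x^{-\alpha}(X^{(x)}_{x^\alpha I_u})^{\alpha}=x^{-\alpha}(xe^{\xi_u})^{\alpha}=e^{\alpha\xi_u}$, so $I_u=\int_0^u e^{\alpha\xi_s}\,{\rm d}s$ for $u<\mathbf{e}$, which is the last identity in the statement. Plugging $u=\varphi(x^{-\alpha}t)$ into the definition of $\xi$ and using $I_{\varphi(x^{-\alpha}t)}=x^{-\alpha}t$ yields $X^{(x)}_t=x\exp(\xi_{\varphi(x^{-\alpha}t)})$ for $t<\zeta^{(x)}$, and this extends to all $t\ge0$ under the conventions $\varphi(x^{-\alpha}t):=\mathbf{e}$ and $\xi_{\mathbf{e}}:=-\infty$ for $t\ge\zeta^{(x)}$.

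For the L\'evy property, fix $u\ge0$ with $u<\mathbf{e}$ and $v\ge0$. Since $x^\alpha I_u$ is a stopping time, the strong Markov property of $X$ gives that, conditionally on $\mathcal{G}_{x^\alpha I_u}$, the shifted path $\tilde X:=\{X^{(x)}_{x^\alpha I_u+r}:r\ge0\}$ is distributed as $X$ issued from $y:=X^{(x)}_{x^\alpha I_u}=xe^{\xi_u}$, which by the scaling property coincides with the law of $\{yX'_{y^{-\alpha}r}:r\ge0\}$ for an independent copy $X'$ of $(X,P_1)$. Writing $x^\alpha I_{u+v}=x^\alpha I_u+r^*$ and unwinding the definition of $\varphi$, one finds that the additional clock satisfies $v=\int_0^{r^*}(\tilde X_s)^{-\alpha}\,{\rm d}s=\int_0^{y^{-\alpha}r^*}(X'_w)^{-\alpha}\,{\rm d}w$, so $y^{-\alpha}r^*$ equals the Lamperti time change of $X'$ evaluated at $v$. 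Hence $\xi_{u+v}-\xi_u=\log(X^{(x)}_{x^\alpha I_{u+v}}/y)=\log(X'_{y^{-\alpha}r^*})$ is precisely $\xi'_v$, where $\xi'$ is the Lamperti process built from $X'$; and since the law of the Lamperti process does not depend on the starting point, $\xi'$ is a copy of $\xi$, independent of $\mathcal{G}_{x^\alpha I_u}$. This delivers stationary, independent increments, so $\xi$ is a (possibly killed) L\'evy process with killing time $\mathbf{k}=\mathbf{e}$. By the lemma preceding the proposition, in cases (i) and (ii) we have $\mathbf{e}=\infty$ a.s., so $\xi$ is unkilled; in case (iii), $\mathbf{e}$ is exponentially distributed and, by the conditional-independence structure just exhibited, plays the role of an independent exponential killing time.

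Finally, the asymptotic behaviour. In case (ii), $X^{(x)}_t\to0$ as $t\uparrow\zeta^{(x)}$ and $\varphi(x^{-\alpha}\zeta^{(x)}-)=\infty$, so $\xi_u=\log(X^{(x)}_{x^\alpha I_u}/x)\to-\infty$ as $u\to\infty$, i.e.\ $\lim_{t\to\infty}\xi_t=-\infty$. In case (i), suppose $\lim_{t\to\infty}\xi_t=-\infty$; then $\int_0^\infty e^{\alpha\xi_s}\,{\rm d}s<\infty$ a.s., so $I_u=\int_0^u e^{\alpha\xi_s}\,{\rm d}s$ converges to a finite limit as $u\uparrow\mathbf{e}=\infty$, which is impossible since $I$ is the inverse of $\varphi:[0,\infty)\to[0,\infty)$ and therefore increases to $\infty$. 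Recalling the standard trichotomy for the long-term behaviour of a L\'evy process, the only remaining possibilities are $\xi_t\to\infty$ and $\limsup_{t\to\infty}\xi_t=\infty=-\liminf_{t\to\infty}\xi_t$, and in both $\limsup_{t\to\infty}\xi_t=\infty$, as claimed. In case (iii) nothing further is needed.

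The step I expect to be the genuine obstacle is the cancellation in the second paragraph: verifying carefully that restarting $X$ at the \emph{random} level $y=xe^{\xi_u}$ introduces a $y$-dependence that is annihilated exactly by the transformation $\log(\,\cdot\,/y)$ together with the index-$\alpha$ self-similar time change, and in particular that the restarted Lamperti clock glues correctly onto $\varphi$ at time $u$. This is the point where self-similarity of $X$ is used in an essential way; the remainder is routine manipulation of the time change and an appeal to the quoted lemma.
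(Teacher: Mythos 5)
The paper offers no proof of this proposition, deferring instead to Theorem 13.1 of \cite{kyprianou} (and ultimately to Lamperti's original article); your reconstruction is exactly the standard argument given there --- the strong Markov property at the stopping times $x^\alpha I_u$ combined with the scaling property to obtain stationary independent increments, the quoted lemma to identify the killing time $\mathbf{e}$, and the a.s.\ finiteness of $\int_0^\infty {\rm e}^{\alpha\xi_s}\,{\rm d}s$ when $\xi_t\to-\infty$ to rule out that behaviour in case (i) --- and it is correct. The only blemishes are cosmetic: in your stopping-time identity the argument of $\varphi$ should be $x^{-\alpha}r$ rather than $r$ under the paper's normalisation of $\varphi$, and, as in the cited sources, the degenerate constant process ($\xi\equiv 0$) is tacitly excluded from case (i).
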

The version of the Lamperti transformation we have just given is Theorem~13.1 in~\cite{kyprianou}, where one can also find a proof of it.\\
\indent We conclude this subsection by explaining why the sets $\{t\geq 0:X_t=\overline X_\zeta\}$ and $\{0\leq t<\zeta:X_t=\underline X_{\zeta-}\}$ mentioned in the introduction are singletons. By definition of $\mathcal{C}$ and $\hat{\mathcal{C}}$ it is clear that both sets are non-empty, but they could potentially contain more than one element. In view of the Lamperti transformation we see that the aforementioned sets contain only a single element provided the same is true for the sets $\{t\geq 0:\xi_t=\sup_{0\leq u<\infty}\xi_u\}$ and $\{0\leq t<\mathbf{e}:\xi_t=\inf_{0\leq u<t}\xi_u\}$, where $\xi$ is the underlying Lamperti representation of $X$ in $\mathcal{C}$ and $\hat{\mathcal{C}}$ respectively. However, it is known that local extrema (and hence global extrema) of L\'evy processes are distinct except for compound Poisson processes, see Proposition 4 in~\cite{bertoin_book}. But for $X$ in $\mathcal{C}$ or $\hat{\mathcal{C}}$ the Lamperti transformation can never be a compound Poisson process and thus the assertion follows.

\section{Reformulation of problems and main results}\label{c5_refor_maxi_mini}
\subsection{Predicting the time at which the maximum is attained}\label{c5_maxi}
Suppose throughout this subsection that $X\in\mathcal{C}$ with parameter of self-similarity $\alpha>0$ and let $\xi$ be its Lamperti representation which is a spectrally negative L\'evy process killed at some rate $q\geq0$ satisfying $\lim_{t\uparrow\infty}\xi_t=-\infty$ whenever $q=0$. For $\theta\geq 0$, let $\psi(\theta)$ be the Laplace exponent of $\xi$ and $\phi(\theta)=q+\psi(\theta)$ the Laplace exponent of $\xi$ unkilled. Denote by $\Phi$ the right-inverse of $\phi$ and note that $\Phi(q)>0$.\\
\indent We begin our analysis with two steps that are almost identical to Lemmas~1 and 2 of~\cite{GloHulPes}. For this reason, we omit the proof of the first of the two lemmas below and streamline the proof the second to the particular case at hand.
\begin{lem}\label{c5_rewrite_0}
For any $\mathbb{G}$-stopping time $\tau$ we have
\begin{equation*}
\abs{\Theta-\tau}=\Theta+\int_0^\tau\big(21_{\{\Theta\leq t\}}-1\big)\,{\rm d}t.
\end{equation*}
\end{lem}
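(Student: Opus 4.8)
The claimed identity is purely pathwise and deterministic: it holds for every fixed $\omega$ once we read $\Theta=\Theta(\omega)$ and $\tau=\tau(\omega)$ as (a.s.~finite, since $X\in\mathcal{C}$ forces $\zeta<\infty$ and hence $0\le\Theta<\zeta<\infty$) nonnegative real numbers. Indeed, $\Theta$ is generally not a $\mathbb{G}$-stopping time, so no optional-stopping or martingale argument is available; the natural route is to evaluate both sides directly for fixed $\omega$. The plan is therefore simply to compute the integral on the right-hand side by splitting the range of integration at the value~$\Theta$.

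First I would record that the integrand $2\cdot 1_{\{\Theta\le t\}}-1$ equals $-1$ for $t<\Theta$ and $+1$ for $t\ge\Theta$, and is certainly Lebesgue-integrable on any bounded interval. I would then distinguish two cases. If $\tau\le\Theta$, then $1_{\{\Theta\le t\}}=0$ for all $t\in[0,\tau]$, so $\int_0^\tau(2\cdot 1_{\{\Theta\le t\}}-1)\,{\rm d}t=-\tau$, and the right-hand side becomes $\Theta-\tau=|\Theta-\tau|$. If $\tau>\Theta$, I would split $\int_0^\tau=\int_0^\Theta+\int_\Theta^\tau$, obtaining $-\Theta+(\tau-\Theta)=\tau-2\Theta$, so the right-hand side becomes $\Theta+(\tau-2\Theta)=\tau-\Theta=|\Theta-\tau|$. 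In both cases the two sides agree, which proves the lemma.

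I do not expect any genuine obstacle here; the statement is essentially a bookkeeping identity (the same one used in Lemma~1 of~\cite{GloHulPes}), and the only point worth a word is that the computation is valid pathwise without any regularity or adaptedness hypothesis on $\Theta$. Once the identity is in hand, the measurability of both sides and the subsequent passage to expectations in~\eqref{c5_problem1} follow from the fact that $\Theta$ and $\tau$ are random variables and the integrand is jointly measurable in $(t,\omega)$, so Fubini applies when the identity is later integrated against $E_x$.
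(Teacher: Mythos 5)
Your proof is correct, and it is exactly the elementary pathwise case-splitting computation that the paper has in mind when it omits the proof and refers to Lemma~1 of~\cite{GloHulPes}. Nothing further is needed.
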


\begin{lem}\label{c5_rewrite_1}
For $x>0$ and any $\mathbb{G}$-stopping time $\tau$ with finite mean we~have
\begin{equation}
E_x[\vert\Theta-\tau\vert-\Theta]=E_x\big[\int_0^{\tau\wedge\zeta}F(\overline X_t/X_t)\,{\rm d}t]+E_x[(\tau-\zeta)1_{\{\tau>\zeta\}}]\label{c5_rewrite_2},
\end{equation}
where $F(y)=1-2y^{-\Phi(q)}$, $y\geq 1$.
\end{lem}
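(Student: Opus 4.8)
\emph{Proof plan.} The plan is to reduce the left-hand side to an integral of a conditional probability via Lemma~\ref{c5_rewrite_0} and Fubini, to handle the part of the time axis beyond $\zeta$ directly, and to evaluate $P_x[\Theta\le t\mid\mathcal G_t]$ on $\{t<\zeta\}$ through the Markov property, the scaling property, and the Lamperti transformation.

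First, by Lemma~\ref{c5_rewrite_0} one has $\abs{\Theta-\tau}-\Theta=\int_0^\tau(21_{\{\Theta\le t\}}-1)\,{\rm d}t$; since the integrand is bounded by $1$ and $E_x[\tau]<\infty$, Fubini yields
$$E_x[\abs{\Theta-\tau}-\Theta]=\int_0^\infty E_x\big[1_{\{t<\tau\}}\big(21_{\{\Theta\le t\}}-1\big)\big]\,{\rm d}t .$$
Here $\{t<\tau\}\in\mathcal G_t$ because $\tau$ is a $\mathbb G$-stopping time, and similarly $\{t<\zeta\}\in\mathcal G_t$. On $\{t\ge\zeta\}$ we have $\Theta\le\zeta\le t$, hence $21_{\{\Theta\le t\}}-1=1$ there, so this part of the integral equals $\int_0^\infty E_x[1_{\{\zeta\le t<\tau\}}]\,{\rm d}t=E_x[(\tau-\zeta)1_{\{\tau>\zeta\}}]$, the second term of~\eqref{c5_rewrite_2}. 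It therefore suffices to show that, on $\{t<\zeta\}$,
$$2\,P_x[\Theta\le t\mid\mathcal G_t]-1=F\big(\overline X_t/X_t\big)\qquad P_x\text{-a.s.,}$$
for then $1_{\{t<\tau\}}1_{\{t<\zeta\}}=1_{\{t<\tau\wedge\zeta\}}$ is $\mathcal G_t$-measurable, conditioning on $\mathcal G_t$ gives $E_x[1_{\{t<\tau\wedge\zeta\}}F(\overline X_t/X_t)]$, and a second application of Fubini (valid since $\abs F\le 1$ on $[1,\infty)$ and $E_x[\tau\wedge\zeta]<\infty$) turns the remaining part into $E_x[\int_0^{\tau\wedge\zeta}F(\overline X_t/X_t)\,{\rm d}t]$.

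To evaluate the conditional probability, recall from Subsection~\ref{c5_Lamperti} that for $X\in\mathcal C$ the global maximum is attained at a unique time, so $\{\Theta\le t\}=\{\overline X_\zeta=\overline X_t\}=\{\sup_{t\le u<\zeta}X_u\le\overline X_t\}$. On $\{t<\zeta\}$, the Markov property of $X$ at the fixed time $t$ together with the $\mathcal G_t$-measurability of $X_t$ and $\overline X_t$ gives $P_x[\Theta\le t\mid\mathcal G_t]=G(X_t,\overline X_t)$, where $G(y,a):=P_y[\overline X_\zeta\le a]$ for $a\ge y>0$; by the scaling property $G(y,a)=G(1,a/y)=:h(a/y)$. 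Finally, by Proposition~\ref{c5_Lamp} the time change $\varphi$ is a strictly increasing continuous bijection of $[0,\zeta)$ onto $[0,\mathbf e)$, so under $P_1$ one has $\overline X_\zeta=\exp(\sup_{u\ge0}\xi_u)$, with $\xi$ the spectrally negative L\'evy process killed at rate $q$ attached to $X$. Since $\xi$ has no positive jumps it creeps upward, and with $\tau_y^+$ the first passage time of $\xi$ above level $y\ge0$ one has $P[\sup_{u\ge0}\xi_u>y]=E[{\rm e}^{-q\tau_y^+}]={\rm e}^{-\Phi(q)y}$ (covering also $q=0$, where $\mathbf e=\infty$ and $\Phi(0)>0$). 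Hence $h(r)=P[\sup_{u\ge0}\xi_u\le\log r]=1-r^{-\Phi(q)}$ for $r\ge1$, so $2h(r)-1=1-2r^{-\Phi(q)}=F(r)$, which completes the argument.

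The main obstacle is the middle identity $P_x[\Theta\le t\mid\mathcal G_t]=h(\overline X_t/X_t)$ on $\{t<\zeta\}$: one must invoke the Markov property at a deterministic time but against the random barrier $\overline X_t$, and one must make sure the event $\{\Theta\le t\}$ is correctly expressed through the running maximum rather than through $X$ literally visiting $\overline X_\zeta$, which is a point of care when $X$ has downward jumps; both are handled exactly as in Lemma~2 of~\cite{GloHulPes}. By contrast, the genuinely new ingredient---the identification of $h$ via the Lamperti representation and the $\mathrm{Exp}(\Phi(q))$ law of the supremum of a spectrally negative L\'evy process---is short once this reduction is in place.
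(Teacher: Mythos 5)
Your proposal is correct and follows essentially the same route as the paper: Lemma~\ref{c5_rewrite_0} plus Fubini, the Markov property at the deterministic time $t$ against the barrier $\overline X_t$, and the Lamperti transformation combined with the identity $P[\sup_{u<\mathbf e}\xi_u>y]={\rm e}^{-\Phi(q)y}$ for a spectrally negative L\'evy process. You merely spell out a few steps the paper delegates to Lemma~2 of~\cite{GloHulPes} (the splitting of the time axis at $\zeta$ and the creeping/first-passage justification of the exponential law of the supremum), so no further changes are needed.
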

\begin{proof}
For any $\mathbb{G}$-stopping time $\tau$ with finite mean, following verbatim the proof of Lemma 2 in \cite{GloHulPes}, we have by Fubini's theorem,
\begin{eqnarray}
E_x\big[\int_0^\tau (21_{\{\Theta\leq t\}}-1)\,{\rm d}t\big]
&=&E_x\big[\int_0^\tau (1-2\P_x[\Theta>t\vert\mathcal{G}_t])1_{\{t<\zeta\}}\,{\rm d}t\big]\label{c5_refo_1}\\
&&+E_x[(\tau-\zeta)1_{\{\tau>\zeta\}}].\notag
\end{eqnarray}
Using the strong Markov property of $X$ we obtain on $\{t<\zeta\}$,
\begin{eqnarray*}
P_x[\Theta>t\vert\mathcal{G}_t]&=&P_x\Big[\overline X_t<\sup_{t\leq u<\zeta}X_u\Big\vert\mathcal{G}_t\Big]\\
&=&P_x\Big[s<\sup_{t\leq u<\zeta}X_u\Big\vert\mathcal{G}_t\Big]\Big\vert_{s=\overline X_t}\\
&=&P_{x'}\Big[s<\sup_{0\leq u<\zeta}X_u\Big]\Big\vert_{s=\overline X_t, x' = X_t}.
\end{eqnarray*}
Hence, using the Lamperti transformation we obtain for $0<x\leq s$,
\begin{equation*}
P_x\Big[s<\sup_{0\leq u<\zeta}X_u\Big]=P_x\Big[\log(s/x)<\sup_{0\leq u<\mathbf{e}}\xi_u\Big]={\rm e}^{-\Phi(q)\log(s/x)}.
\end{equation*}
Plugging this into~\eqref{c5_refo_1} gives the result.
\end{proof}

We are interested in minimising the expectation on the left-hand side of~\eqref{c5_rewrite_2} over the set $\mathcal{M}$ of all integrable $\mathbb{G}$-stopping times $\tau$. The requirement that $\tau$ is integrable ensures that~\eqref{c5_rewrite_2} is well defined. Taking into account the specific form of the right-hand side of~\eqref{c5_rewrite_2}, one sees that for $x>0$,
\begin{equation*}
\inf_{\tau\in\mathcal{M}} E_x[\vert\Theta-\tau\vert-\Theta]=\inf_{\tau\in\mathcal{M}} E_x\big[\int_0^{\tau\wedge\zeta}F(\overline X_t/X_t)\,{\rm d}t].
\end{equation*}


It turns out that, in providing a solution to (\ref{c5_problem1}) we need to restrict ourselves to the case that the underlying L\'evy process in the Lamperti transform (and hence the pssMp) satisfies a condition. We therefore define the modified class
\begin{eqnarray*}
\mathcal{C}^1&:=&\{\text{$X\in\mathcal{C}$ such that {\color{black}$\psi(\alpha)<0$}}\}.
\end{eqnarray*}
The criterion $\psi(\alpha)<0$ is a technical one, which turns out to be equivalent to $\Theta$ being finite in mean (see Theorem \ref{meantheta}). Later on, in  Section~\ref{c5_examples},  we will provide examples where this condition can be checked.
\indent Summing up, for $X\in\mathcal{C}^1$ we are led to the optimal stopping problem
\begin{equation}
v(x,s)=\inf_{\tau} E_x\big[\int_0^{\tau\wedge\zeta} F((s\vee\overline X_t)/X_t)\,{\rm d}t],\qquad 0<x\leq s,\label{c5_problem3}
\end{equation}
where the infimum is taken over all integrable $\mathbb{G}$-stopping times $\tau$. We are now in a position to state our first main result.
 
\begin{theorem}\label{c5_main_res_1}
Let $X\in\mathcal{C}^1$ with index of self-similarity $\alpha>0$, in which case its Lamperti representation $\xi$ is a spectrally negative L\'evy process killed at rate $q\geq 0$. 
Recall that $\phi$ is the Laplace exponent of $\xi$ unkilled and $\Phi$ its right-inverse. Let $\W{\cdot}{z}$ be the scale function associated with $\phi$. Then the solution of~\eqref{c5_problem3} is given by
\begin{equation*}
v(x,s)=-\int_{K^*s}^xz^{\alpha-1}\bigg(1-2\Big(\frac{z}{s}\Big)^{\Phi(q)}\bigg)\W{q}{\log(x/z)}\,{\rm d}z
\end{equation*}
and $\tau^*:=\inf\{t\geq0:X_t\leq K^*(s\vee\overline X_t)\}$, where $K^*\in(0,2^{-\frac{1}{\Phi(q)}})$ is the unique solution to the equation (in $K$)
\begin{equation}
\int_0^{\log(1/K)}(1-2{\rm e}^{-\Phi(q)z})\dWW{q-\phi(\alpha)}{z}{\alpha}\,{\rm d}z=\W{q}{0}\quad\text{on $(0,1)$.}\label{c5_for_k_1}
\end{equation} 
\end{theorem}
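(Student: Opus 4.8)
The plan is to recognise \eqref{c5_problem3} as an optimal stopping problem driven by the reflected process $\overline{X}/X$ and to solve it via a guess-and-verify (``free-boundary'') argument, after first exploiting self-similarity to collapse the two-dimensional state $(x,s)$ onto the single variable $s/x$. Concretely, I would first reduce to $s=\overline{X}_0=x$, or rather track the ratio process, and pass through the Lamperti transform: writing $X_t=x\exp(\xi_{\varphi(x^{-\alpha}t)})$, the time change $t\mapsto\varphi(x^{-\alpha}t)$ with Jacobian ${\rm e}^{\alpha\xi}$ turns $\int_0^{\tau\wedge\zeta}F(\overline{X}_t/X_t)\,{\rm d}t$ into $x^{\alpha}\E\big[\int_0^{\sigma\wedge\mathbf{e}}{\rm e}^{\alpha\xi_u}F({\rm e}^{\overline{\xi}_u-\xi_u})\,{\rm d}u\big]$, where $\overline{\xi}$ is the running supremum of $\xi$ and $\sigma$ is the image stopping time. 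Since $F(y)=1-2y^{-\Phi(q)}$, the integrand is ${\rm e}^{\alpha\xi_u}-2{\rm e}^{\alpha\xi_u}{\rm e}^{-\Phi(q)(\overline{\xi}_u-\xi_u)}$. The natural candidate optimal region is $\{\overline{\xi}_u-\xi_u\geq\log(1/K^*)\}$, i.e.\ stop once $\xi$ has dropped a fixed amount below its running max; this is exactly $\tau^*=\inf\{t:X_t\leq K^*(s\vee\overline{X}_t)\}$ back in the original picture.

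Next I would compute the candidate value function explicitly. Using the Esscher change of measure \eqref{c5_changeofmeasure} with $v=\alpha$ (legitimate because $\psi(\alpha)<0<\infty$ by the definition of $\mathcal{C}^1$), under $\P^{\alpha}$ the process $\xi$ is still spectrally negative with Laplace exponent $\psi_{\alpha}$, and the factor ${\rm e}^{\alpha\xi_u-\psi(\alpha)u}$ is absorbed into the measure; the condition $\psi(\alpha)<0$ is precisely what makes the resulting functionals finite and, via Theorem~\ref{meantheta}, what makes $\Theta$ integrable so that \eqref{c5_rewrite_2} is meaningful. The relevant excursion-theoretic / scale-function identities for a spectrally negative L\'evy process reflected at its supremum (the standard two-sided exit and occupation formulas expressed through $W^{(\cdot)}$, $Z^{(\cdot)}$, and their $\P^{\alpha}$-counterparts $W^{(\cdot)}_{\alpha}$, $Z^{(\cdot)}_{\alpha}$, together with the transfer identity \eqref{c5_scale5}) then yield a closed form for
$$
v_K(x,s)=\E_x\Big[\int_0^{\tau_K\wedge\zeta}F((s\vee\overline{X}_t)/X_t)\,{\rm d}t\Big],
$$
which after undoing the time change and the Esscher transform becomes the stated integral $-\int_{Ks}^{x}z^{\alpha-1}(1-2(z/s)^{\Phi(q)})\,\W{q}{\log(x/z)}\,{\rm d}z$. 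The free-boundary condition determining the optimal $K$ is smooth fit (continuity of the $x$-derivative, or equivalently normal reflection in the $s$-variable) at the boundary $x=Ks$; carrying out this differentiation and using \eqref{c5_def_Z}, \eqref{c5_scale5} and \eqref{c5_continuityatorigin}, \eqref{c5_derivativeatorigin} should produce exactly equation \eqref{c5_for_k_1}, and I would check that the left-hand side of \eqref{c5_for_k_1} is continuous and strictly monotone in $K$ on $(0,1)$ with the right sign at the endpoints (using $\psi(\alpha)<0$, i.e.\ $\phi(\alpha)<q$, to control the $q-\phi(\alpha)$ scale function), giving existence and uniqueness of $K^*\in(0,2^{-1/\Phi(q)})$; the upper bound $2^{-1/\Phi(q)}$ corresponds to the sign change of $F$, since it can never be optimal to continue where $F>0$.

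Finally, I would run the verification argument. Set $g(x,s)=v_{K^*}(x,s)$. Using the regularity \eqref{c5_smoothness} of scale functions one checks $g\in C^{2,1}$ (or $C^{1,1}$ with $C^2$ in $x$ on each side of the boundary, enough for the It\^o–Meyer/Peskir change-of-variables formula) on the continuation region and $g=0$ on the stopping region. Applying the change-of-variables formula to $g(X_t,s\vee\overline{X}_t)$ and using that $\overline{X}$ increases only on $\{X_t=s\vee\overline{X}_t\}$ where the normal-reflection (smooth-fit) condition kills the $\mathrm{d}\overline{X}$ term, one gets a local martingale plus $\int_0^t(\mathcal{L}g)(X_u,\cdot)\,\mathrm{d}u$; by construction $\mathcal{L}g+F=0$ in the continuation region and $\mathcal{L}g+F\geq0$ in the stopping region (this inequality is the substantive thing to check, and should again reduce to a monotonicity statement equivalent to the defining property of $K^*$), whence for every integrable $\tau$, $E_x[\int_0^{\tau\wedge\zeta}F\,\mathrm{d}u]\geq g(x,s)$, with equality at $\tau^*$ after a localisation/uniform-integrability argument (here the integrability of $\Theta$, hence of the relevant dominating functionals, is used). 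The main obstacle I expect is the scale-function bookkeeping: getting the closed form of $v_{K^*}$ and, above all, verifying the sign $\mathcal{L}g+F\geq0$ throughout the stopping region $\{x\leq K^*s\}$ — this is where one genuinely needs the structure of $F$ and the $\psi(\alpha)<0$ hypothesis, and it is the step most likely to require a delicate monotonicity lemma rather than a routine computation.
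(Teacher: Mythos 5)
Your proposal follows essentially the same route as the paper: reduction to a one-dimensional problem for the Lamperti representation reflected at its supremum via the time change and the Esscher transform with $v=\alpha$ (where $\psi(\alpha)<0$ guarantees finiteness), computation of the candidate value $V_k$ through the scale-function expression for the potential measure of the reflected process, determination of $k^*$ by smooth/continuous fit leading to \eqref{c5_for_k_1} together with a monotonicity argument for uniqueness of the root, and a martingale/submartingale verification. The only cosmetic difference is that the paper carries out the verification entirely in the one-dimensional reflected-process coordinates rather than for $g(X_t,s\vee\overline X_t)$ directly, and it handles the (one-sided) nature of the time-changed stopping-time class by noting the reduction gives an inequality that is attained at the threshold rule; neither changes the substance of your argument.
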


\begin{rem}\label{c5_tech_con}
\rm
The right-hand side of~\eqref{c5_for_k_1} is equal to zero unless $\xi$ is of bounded variation; see~\eqref{c5_continuityatorigin}.
\end{rem}

It is interesting to note that 
if $E_x(\Theta)<\infty$, then minimising $E_x[\vert\Theta-\tau\vert-\Theta]$ is equivalent to minimising $E_x[\vert\Theta-\tau\vert]$.  The following theorem, also proved in Section \ref{c5_proofs}, shows that the additional condition on $\xi$ in the restricted class $\mathcal{C}^1$ implies that $\Theta$ always has a finite mean.

\begin{theorem}\label{meantheta}
 When $X$ belongs to $\mathcal{C}$, we have $E_x(\Theta)<\infty$ for all $x>0$ if and only if $\psi(\alpha)<0$.
\end{theorem}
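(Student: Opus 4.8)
The plan is to push everything through the Lamperti representation $\xi$ of $X$ and reduce both directions to computations with the exponential functional $\int_0^{\mathbf{e}}{\rm e}^{\alpha\xi_s}\,{\rm d}s$. First I would record two identities. By Proposition~\ref{c5_Lamp} one has $\zeta=x^{\alpha}\int_0^{\mathbf{e}}{\rm e}^{\alpha\xi_s}\,{\rm d}s$, and since $\xi$ is spectrally negative $\psi(\alpha)<\infty$ with $E_x[{\rm e}^{\alpha\xi_s}1_{\{s<\mathbf{e}\}}]={\rm e}^{\psi(\alpha)s}$; hence $E_x[\zeta]=x^{\alpha}\int_0^{\infty}{\rm e}^{\psi(\alpha)s}\,{\rm d}s$, which is finite exactly when $\psi(\alpha)<0$. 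For $\Theta$, write $\Theta=\int_0^{\infty}1_{\{t<\Theta\}}\,{\rm d}t$ (note $\Theta\le\zeta<\infty$) and use the conditional law established inside the proof of Lemma~\ref{c5_rewrite_1}, namely $P_x[\Theta>t\mid\mathcal{G}_t]=(X_t/\overline X_t)^{\Phi(q)}$ on $\{t<\zeta\}$ and $0$ on $\{t\ge\zeta\}$; Fubini's theorem then yields
\begin{equation*}
E_x[\Theta]=E_x\Big[\int_0^{\zeta}\big(X_t/\overline X_t\big)^{\Phi(q)}\,{\rm d}t\Big].
\end{equation*}
Since $\Phi(q)>0$ and $X_t\le\overline X_t$, the integrand is $\le1$, so $E_x[\Theta]\le E_x[\zeta]$; this already gives the ``if'' direction, $\psi(\alpha)<0\Rightarrow E_x[\Theta]<\infty$.

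For the converse I would first transfer the displayed identity to $\xi$. With $\beta:=\Phi(q)>0$, the Lamperti change of variables $t=x^{\alpha}I_s$ (so ${\rm d}t=x^{\alpha}{\rm e}^{\alpha\xi_s}\,{\rm d}s$ and $X_t/\overline X_t={\rm e}^{\xi_s-\overline\xi_s}$, where $\overline\xi_s=\sup_{u\le s}\xi_u$) gives
\begin{equation*}
E_x[\Theta]=x^{\alpha}\,E_x\Big[\int_0^{\mathbf{e}}{\rm e}^{(\alpha+\beta)\xi_s-\beta\overline\xi_s}\,{\rm d}s\Big].
\end{equation*}
Now suppose $\psi(\alpha)\ge0$. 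Because $\psi$ is strictly convex with $\psi(0)=-q\le0$ and $\psi(\infty)=\infty$, it is nonpositive on $[0,\beta]$ and strictly positive on $(\beta,\infty)$, so $\psi(\alpha)\ge0$ forces $\alpha\ge\beta$; this is the point where the analytic hypothesis meets the exponential tail rate of $\overline\xi_\infty$, since $P_x[\overline\xi_\infty>a]={\rm e}^{-\beta a}$. Fix $\delta>0$ small enough that $c_0:=P_x[\sup_{0\le s\le\delta}|\xi_s|<1]>0$, which is possible because $\xi$ is c\`adl\`ag with $\xi_0=0$. For $k=0,1,2,\dots$ let $\varrho_k:=\inf\{s\ge0:\xi_s>k\}$; since $\xi$ has no positive jumps it creeps upwards, so on $\{\varrho_k<\mathbf{e}\}$ we have $\xi_{\varrho_k}=\overline\xi_{\varrho_k}=k$, and $\varrho_0\le\varrho_1\le\cdots$. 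Put
\begin{equation*}
B_k:=\{\varrho_k<\mathbf{e}\}\cap\Big\{\sup_{\varrho_k\le s\le\varrho_k+\delta}|\xi_s-k|<1\Big\}.
\end{equation*}

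On $B_k$ the path is finite on $[\varrho_k,\varrho_k+\delta]$, so that interval lies inside $[0,\mathbf{e})$; since $\xi_s<k+1$ there one has $\varrho_{k+1}>\varrho_k+\delta$, whence the intervals $\{[\varrho_k,\varrho_k+\delta]:B_k\text{ occurs}\}$ are pairwise disjoint; and for $s$ in that interval $\xi_s>k-1$ while $\overline\xi_s=\max(k,\sup_{[\varrho_k,s]}\xi)<k+1$, so $(\alpha+\beta)\xi_s-\beta\overline\xi_s>\alpha k-\alpha-2\beta$. Using nonnegativity of the integrand and summing the contributions of these disjoint intervals,
\begin{equation*}
\int_0^{\mathbf{e}}{\rm e}^{(\alpha+\beta)\xi_s-\beta\overline\xi_s}\,{\rm d}s\ \ge\ \sum_{k\ge0}1_{B_k}\,\delta\,{\rm e}^{\alpha k-\alpha-2\beta}.
\end{equation*}
By the strong Markov property at $\varrho_k$ (after which the killing clock is again exponential of rate $q$) together with $P_x[\varrho_k<\mathbf{e}]=P_x[\overline\xi_\infty>k]={\rm e}^{-\beta k}$, one gets $E_x[1_{B_k}]={\rm e}^{-\beta k}c_0$, so that
\begin{equation*}
E_x[\Theta]\ \ge\ x^{\alpha}\,\delta\,c_0\,{\rm e}^{-\alpha-2\beta}\sum_{k\ge0}{\rm e}^{(\alpha-\beta)k}\ =\ \infty
\end{equation*}
because $\alpha\ge\beta$. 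This is the contrapositive of the ``only if'' implication, and the proof is complete.

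The hard part is making this last lower bound airtight: one must choose the approximating events $B_k$ so that the associated time intervals genuinely sit within the random lifetime $[0,\mathbf{e})$ and are mutually disjoint, while remaining computable through the strong Markov property with probability of order ${\rm e}^{-\beta k}$. The key device is to control the \emph{probability} of a ``stay within distance one of level $k$ for a short fixed time'' event — automatically bounded by $1$, and positive provided the horizon $\delta$ is chosen small — rather than an expected occupation time of a neighbourhood of level $k$, which could a priori be infinite and would wreck the summation. Everything else (the two identities, the Lamperti change of variables, and the elementary convexity fact locating the positive zero of $\psi$ at $\Phi(q)$) is routine.
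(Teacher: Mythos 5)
Your argument is correct, and it proves the theorem by a genuinely different route from the paper. The paper represents $x^{-\alpha}\Theta$ as the exponential functional $\int_0^{G}{\rm e}^{\alpha\xi_t}\,{\rm d}t$ up to the argmax $G$ of $\xi$ and then evaluates $E_1(\Theta)$ essentially exactly by excursion theory: it splits according to the excursions of $\overline\xi-\xi$ from $0$, applies the compensation formula, and computes the excursion-measure integral $n(\int_0^\varsigma {\rm e}^{-\alpha\epsilon(s)}\,{\rm d}s;\,\varsigma<\mathbf{e})=\alpha/\psi(\Phi(q)+\alpha)$ via the renewal measures of the ladder height processes; the finiteness criterion then drops out of $E_1[{\rm e}^{\alpha\chi}]<\infty\Leftrightarrow\Phi(q)>\alpha$, where $\chi$ is exponentially distributed with parameter $\Phi(q)$. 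You instead (a) settle the ``if'' direction by the domination $\Theta\le\zeta$ together with $E_x[\zeta]=x^\alpha\int_0^\infty{\rm e}^{\psi(\alpha)s}\,{\rm d}s$ (a fact the paper records only as a remark after the theorem), and (b) prove the ``only if'' direction by projecting onto $\mathcal{F}_s$ to get $E_x[\Theta]=x^{\alpha}E[\int_0^{\mathbf{e}}{\rm e}^{(\alpha+\beta)\xi_s-\beta\overline\xi_s}\,{\rm d}s]$ with $\beta=\Phi(q)$, and then bounding this below by summing the contributions of essentially disjoint time windows attached to the successive upward passage times $\varrho_k$, whose probabilities ${\rm e}^{-\Phi(q)k}$ lose to the growth ${\rm e}^{\alpha k}$ precisely when $\alpha\ge\Phi(q)$, i.e. when $\psi(\alpha)\ge0$. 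Your route is more elementary and self-contained --- it uses only creeping, the exponential law of $\overline\xi_{\mathbf{e}}$, and the strong Markov property, with no ladder-process or Doney-type computation --- and your choice of fixed-length confinement windows with a uniformly positive probability $c_0$ (rather than occupation times near level $k$, which could fail to be summable) is exactly the right device to make the divergence rigorous. What the paper's heavier method buys in exchange is an explicit expression for $E_1(\Theta)$ itself rather than merely the finiteness criterion. Two cosmetic remarks: the bound $E_x[\Theta]\le E_x[\zeta]$ needs none of the conditional-law machinery, since $\Theta\le\zeta$ pointwise by definition; and for $k=0$ the creeping statement $\xi_{\varrho_0}=0$ is immaterial, as discarding that single term does not affect the divergence of the series.
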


\noindent Noting from the Lamperti transformation  that $\zeta =\int_0^{\mathbf{e}}{\rm e}^{\alpha\xi_t}{\rm d}t$, one also sees that 
$\psi(\alpha)<0$ is also necessary and sufficient for $E_x(\zeta)<0$ for all $x>0$. 

Theorem \ref{c5_main_res_1} is a result is a consequence of the analysis in Section~\ref{c5_reduction} and~\ref{c5_key_problem} and its proof is given in Section \ref{c5_proofs}.  An explicit example is provided in Section~\ref{c5_examples}. 

\subsection{Predicting the time at which the minimum is attained}\label{c5_mini}
Suppose throughout this subsection that $X\in \hat{\mathcal{C}}$ with parameter of self-similarity $\alpha>0$ and let $\xi$ again be its Lamperti representation which is a spectrally positive L\'evy process killed at rate $q\geq 0$ satisfying $\lim_{t\uparrow\infty}\xi_t=\infty$ whenever $q=0$. Introduce the dual $\hat\xi=\{\hat\xi_t:t\geq 0\}$ of $\xi$ which is defined as
\begin{equation*}
\hat\xi_t:=\begin{cases}-\xi_t,&t<\mathbf{e},\\
-\infty,&t\geq\bf{e},\end{cases}
\end{equation*}
where $\mathbf{e}=\inf\{t>0:\xi_t=-\infty\}$. It follows that $\hat\xi$ is a spectrally negative L\'evy process killed at rate $q\geq 0$ satisfying $\lim_{t\uparrow\infty}\hat\xi_t=-\infty$ whenever $q=0$. For $\theta\geq 0$, let $\hat\psi$ be the Laplace exponent of $\hat\xi$ and $\hat\phi(\theta)=q+\hat\psi(\theta)$ the Laplace exponent of $\hat\xi$ unkilled. Finally, denote by $\hat\Phi$ the right-inverse of $\hat\phi$ and note that $\hat\Phi(q)>0$.\\
\indent Analogously to Lemma~\ref{c5_rewrite_0} and~\ref{c5_rewrite_1}, one can prove the following result.
\begin{lem}
For $x>0$ and any $\mathbb{G}$-stopping time $\tau$ with finite mean we~have
\begin{equation}
E_x[\vert\hat\Theta-\tau\vert-\hat\Theta]=E_x\big[\int_0^{\tau\wedge\zeta}\hat F(X_t/\underline X_t)\,{\rm d}t]+E_x[(\tau-\zeta)1_{\{\tau>\zeta\}}]\label{c5_rewrite_3},
\end{equation}
where $\hat F(y):=1-2y^{-\hat\Phi(q)}$, $y\geq 1$.
\end{lem}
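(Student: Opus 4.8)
The plan is to mirror the proof of Lemma~\ref{c5_rewrite_1}, working with the dual process $\hat\xi$ rather than $\xi$, and exploiting the self-similarity and the Lamperti transformation in exactly the same way. First I would apply Lemma~\ref{c5_rewrite_0} (in its obvious analogue for $\hat\Theta$, which holds by the same argument since it only uses that $\hat\Theta$ is a well-defined random variable and $|a-\tau| = a + \int_0^\tau(2\mathbf{1}_{\{a\le t\}}-1)\,{\rm d}t$) to write
\begin{equation*}
E_x[|\hat\Theta-\tau|-\hat\Theta] = E_x\Big[\int_0^\tau\big(2\mathbf{1}_{\{\hat\Theta\le t\}}-1\big)\,{\rm d}t\Big].
\end{equation*}
Then, splitting the integral at $\zeta$ and using Fubini (justified by integrability of $\tau$), the part of the integral on $\{t\ge\zeta\}$ contributes $E_x[(\tau-\zeta)\mathbf{1}_{\{\tau>\zeta\}}]$, since $\hat\Theta<\zeta$ almost surely, while the part on $\{t<\zeta\}$ becomes $E_x\big[\int_0^{\tau\wedge\zeta}(1-2P_x[\hat\Theta>t\mid\mathcal{G}_t])\,{\rm d}t\big]$.

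Next I would compute the conditional probability $P_x[\hat\Theta>t\mid\mathcal{G}_t]$ on $\{t<\zeta\}$. By definition $\hat\Theta>t$ means the global infimum of $X$ (before $\zeta$) has not yet been attained by time $t$, i.e. $\underline X_t > \inf_{t\le u<\zeta}X_u$. Using the strong Markov property of $X$ at time $t$, this equals $P_{x'}[\inf_{0\le u<\zeta}X_u < s]\big|_{s=\underline X_t,\,x'=X_t}$ for $s\le x'$. By the Lamperti transformation, $\inf_{0\le u<\zeta}X_u = x'\exp(\inf_{0\le u<\mathbf{e}}\xi_u)$ under $P_{x'}$, so $P_{x'}[\inf_{0\le u<\zeta}X_u<s] = P[\inf_{0\le u<\mathbf{e}}\xi_u < \log(s/x')] = P[\sup_{0\le u<\mathbf{e}}\hat\xi_u > \log(x'/s)]$, where the last step uses the definition of the dual $\hat\xi$. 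Since $\hat\xi$ is spectrally negative, killed at rate $q$, with $\hat\phi$ its unkilled Laplace exponent and $\hat\Phi$ the right-inverse, the overall supremum $\sup_{u<\mathbf{e}}\hat\xi_u$ is exponentially distributed with parameter $\hat\Phi(q)$ (this is the standard fact that, for a spectrally negative L\'evy process killed at rate $q$, the first passage over level $a>0$ occurs before killing with probability ${\rm e}^{-\hat\Phi(q)a}$; here note $\hat\Phi(q)>0$ as stated). Hence $P_{x'}[\inf_{0\le u<\zeta}X_u<s] = (x'/s)^{-\hat\Phi(q)}$.

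Plugging this back, on $\{t<\zeta\}$ we get $1 - 2P_x[\hat\Theta>t\mid\mathcal{G}_t] = 1 - 2\,(X_t/\underline X_t)^{-\hat\Phi(q)} = \hat F(X_t/\underline X_t)$, which is exactly the claimed integrand, and the lemma follows. I do not expect any serious obstacle here: the argument is a verbatim transcription of the proof of Lemma~\ref{c5_rewrite_1} with $(\xi,\psi,\phi,\Phi,\overline X,\sup)$ replaced by $(\hat\xi,\hat\psi,\hat\phi,\hat\Phi,\underline X,\inf)$. The only point requiring a little care is the passage from $\xi$ to $\hat\xi$ inside the probability and making sure the exponential law of the overall supremum of a killed spectrally negative process has parameter $\hat\Phi(q)$ (consistent with the computation already carried out for $\xi$ in the maximum case), together with checking that $\hat\Theta<\zeta$ a.s.\ so that the event $\{\hat\Theta\le t\}$ coincides with $\{\hat\Theta\le t\}\cap\{t<\zeta\}$ up to the handling of the tail $\{t\ge\zeta\}$; both are immediate from the definition of $\hat{\mathcal{C}}$ and the discussion of singletons in Subsection~\ref{c5_Lamperti}.
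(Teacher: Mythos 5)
Your proposal is correct and is exactly the argument the paper intends: the paper omits the proof of this lemma, stating only that it follows ``analogously'' to Lemmas~\ref{c5_rewrite_0} and~\ref{c5_rewrite_1}, and your transcription to the dual process $\hat\xi$ (with the first-passage identity $P[\sup_{u<\mathbf{e}}\hat\xi_u>a]={\rm e}^{-\hat\Phi(q)a}$ replacing the corresponding fact for $\xi$) is precisely that analogue. No gaps.
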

The specific form of the right-hand side of~\eqref{c5_rewrite_3} shows again that for $x>0$,
\begin{equation*}
\inf_{\tau\in\mathcal{M}}E_x[\vert\hat\Theta-\tau\vert-\hat\Theta]=\inf_{\tau\in\mathcal{M}}E_x\big[\int_0^{\tau\wedge\zeta}\hat F(X_t/\underline X_t)\,{\rm d}t],
\end{equation*}
where $\mathcal{M}$ is the set of all integrable $\mathbb{G}$-stopping times $\tau$. 
Similarly to the problem of predicting the maximum, in order to solve the problem of predicting the minimum, we need to work in a more restrictive class of pssMp than $\hat{\mathcal{C}}.$ To this end, let use define

\begin{eqnarray*}
\hat{\mathcal{C}}^1&:=&\{\text{$X\in\hat{\mathcal{C}}$ such that $\hat\psi$ exists at $-\alpha$ and $\hat\psi(-\alpha)<0$ if $q>0$}\}.
\end{eqnarray*}

For $X\in\hat{\mathcal{C}}^1$, we are led to the optimal stopping problem
\begin{equation}
\hat v(x,i):=\inf_{\tau}E_x[\int_0^{\tau\wedge\zeta}\hat F(X_t/(i\wedge\underline X_t))\,{\rm d}t],\qquad 0<i\leq x,\label{c5_problem4}
\end{equation}
where the infimum is taken respectively with the two cases over all $\mathbb{G}$-stopping times $\tau$ or all integrable $\mathbb{G}$-stopping times $\tau$. We can now state the analogue of Theorem~\ref{c5_main_res_1}. 

\begin{theorem}\label{c5_main_res_2}
Assume that $X\in\hat{\mathcal{C}}^1$ with index of self-similarity $\alpha>0$, in which case the dual $\hat \xi$ of the Lamperti representation of $X$ is a spectrally negative L\'evy process killed at rate $q\geq 0$. 
Moreover, recall that $\hat\phi$ is the Laplace exponent of the dual $\hat\xi$ unkilled and $\hat\Phi$ its right-inverse. Let $\hat W^{(\cdot)}(z)$ be the scale function associated with $\hat\phi$. Then the solution of~\eqref{c5_problem4} is given by
\begin{equation*}
\hat v(x,i)=-\int_x^{\hat K^*i}z^{\alpha-1}\bigg(1-2\Big(\frac{i}{z}\Big)^{\hat\Phi(q)}\bigg)\hat W^{(q)}(\log(z/x))\,{\rm d}z
\end{equation*}
and $\hat\tau^*:=\inf\{t\geq0:X_t\geq\hat K^*(i\wedge\underline X_t)\}$, where $\hat K^*>2^{1/\hat\Phi(q)}$ is the unique solution to the equation (in $K$) 
\begin{equation}
\int_0^{\log(K)}(1-2{\rm e}^{-\hat\Phi(q)z})\hat W_{-\alpha}^{(q-\hat\phi(-\alpha))\prime}(z)\,{\rm d}z=\hat W^{(q)}(0)\qquad\text{on $(1,\infty)$.}\label{c5_for_k_2}
\end{equation}
\end{theorem}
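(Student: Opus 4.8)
\noindent\emph{Proof strategy.} The plan is to re-run the argument behind Theorem~\ref{c5_main_res_1}, developed in Sections~\ref{c5_reduction} and~\ref{c5_key_problem}, with the spectrally negative process $\xi$ there replaced by the spectrally negative dual $\hat\xi$ here, and with a single structural change. The first step is to feed $X\in\hat{\mathcal{C}}^1$ through the Lamperti transformation (Proposition~\ref{c5_Lamp}): writing $u=\varphi(x^{-\alpha}t)$ for the Lamperti time, one has ${\rm d}t=x^\alpha{\rm e}^{\alpha\xi_u}\,{\rm d}u=x^\alpha{\rm e}^{-\alpha\hat\xi_u}\,{\rm d}u$ and, for $0<i\le x$ and $c:=\log(x/i)\ge0$,
\[ \frac{X_t}{i\wedge\underline X_t}=\exp\Big(\big(c\vee\overline{\hat\xi}_u\big)-\hat\xi_u\Big),\qquad\overline{\hat\xi}_u:=\sup_{0\le r\le u}\hat\xi_r, \]
so that~\eqref{c5_problem4} becomes
\[ \hat v(x,i)=x^\alpha\inf_{T}E\Big[\int_0^{T\wedge\mathbf{e}}\hat F\big({\rm e}^{(c\vee\overline{\hat\xi}_u)-\hat\xi_u}\big)\,{\rm e}^{-\alpha\hat\xi_u}\,{\rm d}u\Big], \]
the infimum being over the stopping times $T$ of $\hat\xi$ that correspond to admissible $\tau$. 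This is exactly the optimal stopping problem treated en route to Theorem~\ref{c5_main_res_1}, for the spectrally negative killed process $\hat\xi$ reflected at its supremum, \emph{save that the weight under the integral is now ${\rm e}^{-\alpha\hat\xi_u}$ rather than ${\rm e}^{+\alpha\xi_u}$}. This one sign change is precisely why $\hat{\mathcal{C}}^1$ is defined through $\hat\psi$ at $-\alpha$: finiteness of $\hat\psi(-\alpha)$ is what makes the Esscher transform in Step~2 legitimate, while $\hat\psi(-\alpha)<0$ for $q>0$ is what keeps the reduced problem well posed --- it is the condition guaranteeing $E_x[\hat\Theta]<\infty$ (and $E_x[\zeta]<\infty$ when $q>0$), the counterpart of Theorem~\ref{meantheta}, which I would record separately using $\zeta=\int_0^{\mathbf{e}}{\rm e}^{\alpha\xi_t}\,{\rm d}t=\int_0^{\mathbf{e}}{\rm e}^{-\alpha\hat\xi_t}\,{\rm d}t$.

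Step~2 is to solve this reduced problem by repeating Section~\ref{c5_key_problem} with $-\alpha$ in the role of $\alpha$; none of those arguments uses the sign of the exponent, only finiteness of the relevant Laplace exponent at it. Apply the change of measure ${\rm d}\mathbb{P}^{-\alpha}/{\rm d}\mathbb{P}\big|_{\mathcal F_t}=\exp\big(-\alpha\hat\xi_t-\hat\psi(-\alpha)t\big)1_{\{t<\mathbf{e}\}}$ of~\eqref{c5_changeofmeasure}, under which $\hat\xi$ loses its killing, acquires Laplace exponent $\hat\psi_{-\alpha}$, the weight ${\rm e}^{-\alpha\hat\xi_u}$ is absorbed, and the effective spectral parameter becomes $q-\hat\phi(-\alpha)$ (positive precisely when $q>0$, by the $\hat{\mathcal{C}}^1$ condition), with the scale functions $\hat W_{-\alpha}^{(\cdot)}$ tied to $\hat W^{(\cdot)}$ through~\eqref{c5_scale5}. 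Then postulate the threshold rule $\hat\tau^*=\inf\{t\ge0:X_t\ge\hat K^*(i\wedge\underline X_t)\}$, i.e. stopping the first time the reflected functional $(c\vee\overline{\hat\xi}_u)-\hat\xi_u$ reaches $\log\hat K^*$; evaluate the resulting weighted, killed hitting-time integral for the reflected process by the standard resolvent identities for reflected spectrally negative Lévy processes, which after undoing the change of measure and the Lamperti time-change produce exactly the stated formula for $\hat v(x,i)$; and fix $\hat K^*$ by imposing the appropriate (continuous in the bounded-variation case, smooth otherwise) fit at the boundary $z=\hat K^* i$, giving~\eqref{c5_for_k_2}. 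That~\eqref{c5_for_k_2} has a unique root $\hat K^*\in(2^{1/\hat\Phi(q)},\infty)$ follows from strict monotonicity in $K$ of its left-hand side; the cut-off $2^{1/\hat\Phi(q)}$ is forced since $\hat F<0$ on $[1,2^{1/\hat\Phi(q)})$, so stopping there is never optimal (cf. Remark~\ref{c5_tech_con} for the right-hand side).

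Step~3 is the verification, along the lines of Section~\ref{c5_proofs}: show that the candidate $\hat v$ is non-positive, vanishes on $\{x\ge\hat K^* i\}$, is regular enough, and that $t\mapsto\hat v\big(X_t,\,i\wedge\underline X_t\big)+\int_0^t\hat F\big(X_s/(i\wedge\underline X_s)\big)\,{\rm d}s$ is a $P_x$-submartingale which is a $P_x$-martingale up to $\hat\tau^*$; optional sampling together with the integrability from Step~1 then identifies $\hat v$ with the value in~\eqref{c5_problem4} and shows $\hat\tau^*$ optimal. Apart from the resolvent computation everything is a transcription of the maximum case with the inequalities flipped ($\hat K^*>1$ versus $K^*<1$; integration from $x$ up to $\hat K^* i$ versus from $K^* s$ up to $x$; scale-function argument $\log(z/x)$ versus $\log(x/z)$), so the genuine work --- and the main obstacle --- is twofold: (a) checking that the replacement $\alpha\mapsto-\alpha$ breaks none of the integrability estimates of Sections~\ref{c5_reduction}--\ref{c5_key_problem}, above all that $\mathbb{P}^{-\alpha}$ is a bona fide probability measure and that $\hat W^{(q)}$ and $\hat W_{-\alpha}^{(q-\hat\phi(-\alpha))}$ retain the regularity of~\eqref{c5_smoothness}--\eqref{c5_derivativeatorigin}; and (b) the verification lemma itself, where the fit condition~\eqref{c5_for_k_2} and the finiteness of $E_x[\hat\Theta]$ are what make the (super)martingale arguments go through. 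As a cross-check, Step~1 can equivalently be phrased through the time-changed inverse $1/X$, again a positive self-similar Markov process of index $\alpha$ but with Lamperti representation $\hat\xi$, for which predicting the minimum of $X$ becomes predicting the maximum of $1/X$; the extra multiplicative weight introduced by that time-change is exactly what shifts the relevant Esscher parameter from $\alpha$ to $-\alpha$, reproducing the same reduced problem.
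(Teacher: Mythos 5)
Your proposal follows essentially the same route as the paper: reduce via the Lamperti time-change and the Esscher transform at $-\alpha$ to a one-dimensional stopping problem for the dual $\hat\xi$ reflected at its supremum, solve that by scale-function/resolvent formulas plus smooth or continuous fit, and verify optimality by a submartingale argument (the paper packages these steps as Lemma~\ref{c5_reduction_2} and Theorem~\ref{c5_main_res} with $\beta=-\alpha$, $y=\log(x/i)$). The only cosmetic difference is that you assert the reduction as an equality where the paper records only an inequality and closes the gap afterwards by exhibiting the admissible stopping time attaining the bound, which your verification step accomplishes in any case.
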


\noindent This result is again a consequence of the analysis of Sections~\ref{c5_reduction} and~\ref{c5_key_problem} and the analogue of Remark~\ref{c5_tech_con} applies here as well. An example including the case when $X$ is a $d$-dimensional Bessel process for $d>2$ is provided in Section~\ref{c5_examples}.

As in Subsection~\ref{c5_maxi}, it is natural to 
ask when $\hat\Theta$ has a finite mean. In this respect we have the following result.

\begin{theorem}\label{thetahatfinite} Suppose that $X\in\hat{\mathcal{C}}$ and $q=0$. Then $E_x(\hat\Theta)<\infty$, for all $x>0$, if and only if $\hat\psi(\alpha)<0$. If $X\in\hat{\mathcal{C}}$ and $q>0$ then $\hat\psi(\alpha)<0$ becomes a sufficient condition.
\end{theorem}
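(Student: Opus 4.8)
The plan is to transfer the question to the Lamperti representation, decompose the path of the underlying L\'evy process at its overall supremum, and then apply an Esscher transform that collapses everything to the elementary integrability of an exponential functional of a L\'evy process. \emph{Reduction.} By the scaling property, under $P_{cx}$ the variable $\hat\Theta$ has the law of $c^\alpha\hat\Theta$ under $P_x$, so it suffices to treat $x=1$. By Proposition~\ref{c5_Lamp}, writing $\hat\xi=-\xi$ for the dual of the Lamperti representation (a spectrally negative L\'evy process killed at rate $q$, drifting to $-\infty$ when $q=0$), one has $X_t=\exp(\xi_{\varphi(t)})$ and $\underline X_{\zeta-}=\exp(-M)$ with $M:=\sup_{0\le u<\mathbf e}\hat\xi_u$, so that $\hat\Theta=\int_0^{g}{\rm e}^{-\alpha\hat\xi_s}\,{\rm d}s$, where $g$ is the a.s.\ unique last time $\hat\xi$ is at $M$. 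Decomposing the path on $[0,g)$ into the excursions of $\overline{\hat\xi}-\hat\xi$ below the running supremum, with $\overline{\hat\xi}$ itself as local time at the supremum (legitimate since $\hat\xi$ is spectrally negative), noting that the only excursion on $[0,\mathbf e)$ not returning to $0$ is the one straddling $g$ and that $M\sim\mathrm{Exp}(\hat\Phi(q))$, and writing $\hat\xi_s=\overline{\hat\xi}_s-e_u$ with $e_u\ge0$ the excursion height, the compensation formula yields
\begin{equation*}
E_1(\hat\Theta)=\frac{1}{\alpha+\hat\Phi(q)}\Big(\kappa+\mathbf n\Big(\int_0^{\zeta_e}{\rm e}^{\alpha e_u}\,{\rm d}u\,;\ \zeta_e<\infty\Big)\Big),
\end{equation*}
with $\mathbf n$ the excursion measure and $\kappa\ge0$ a bounded-variation drift constant. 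The prefactor being finite and positive, $E_1(\hat\Theta)<\infty$ if and only if $\mathbf n(\int_0^{\zeta_e}{\rm e}^{\alpha e_u}\,{\rm d}u\,;\ \zeta_e<\infty)<\infty$.

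\emph{Esscher transform.} Since $\hat\psi(\hat\Phi(q))=\hat\phi(\hat\Phi(q))-q=0$, the change of measure~\eqref{c5_changeofmeasure} at $v=\hat\Phi(q)$ has unit time-exponent; as a finite excursion begins and ends at the current supremum level, its law and rate are therefore unchanged, and under $\P^{\hat\Phi(q)}$ the process $\hat\xi$ is unkilled with $\hat\psi_{\hat\Phi(q)}'(0)=\hat\phi'(\hat\Phi(q))>0$, hence drifts to $+\infty$, so all its excursions below the supremum are finite. Thus $\mathbf n(\int_0^{\zeta_e}{\rm e}^{\alpha e_u}\,{\rm d}u\,;\ \zeta_e<\infty)=\mathbf n^{\hat\Phi(q)}(\int_0^{\zeta_e}{\rm e}^{\alpha e_u}\,{\rm d}u)$. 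Now $Y:=-\hat\xi$ under $\P^{\hat\Phi(q)}$ is a spectrally positive L\'evy process drifting to $-\infty$; the excursions of $\overline{\hat\xi}-\hat\xi$ below the supremum are exactly the excursions of $Y$ above its continuously decreasing infimum, and the same compensation argument run at the infimum of $Y$, started from $Y_0=0$, gives
\begin{equation*}
\alpha^{-1}\mathbf n^{\hat\Phi(q)}\Big(\int_0^{\zeta_e}{\rm e}^{\alpha e_u}\,{\rm d}u\Big)+(\text{finite drift term})=E_0^{\hat\Phi(q)}\Big[\int_0^{\infty}{\rm e}^{\alpha Y_u}\,{\rm d}u\Big]=\int_0^\infty {\rm e}^{u\,\chi(\alpha)}\,{\rm d}u
\end{equation*}
by Tonelli, where $\chi(\alpha):=\log E_0^{\hat\Phi(q)}[{\rm e}^{\alpha Y_1}]=\hat\psi_{\hat\Phi(q)}(-\alpha)=\hat\phi(\hat\Phi(q)-\alpha)-q\in(-\infty,\infty]$. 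Hence $E_1(\hat\Theta)<\infty$ if and only if $\chi(\alpha)<0$ (an explicit formula for $E_1(\hat\Theta)$ then also drops out, but is not needed).

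\emph{Conclusion.} It remains to compare $\chi(\alpha)<0$ with $\hat\psi(\alpha)<0$. As $\hat\phi$ is convex with $\hat\phi(0)=0$ and $\hat\phi<q$ on $[0,\hat\Phi(q))$ (recall $\hat\Phi(q)>0$), the assumption $\hat\psi(\alpha)<0$, i.e.\ $\alpha<\hat\Phi(q)$, forces $\hat\phi(\hat\Phi(q)-\alpha)<q$, i.e.\ $\chi(\alpha)<0$; this gives the sufficiency claim for every $q\ge0$. When $q=0$ we have $\hat\phi=\hat\psi$ with $\hat\Phi(0)>0$, and $\hat\phi<0$ exactly on $(0,\hat\Phi(0))$ while $\hat\phi\ge0$ on $(-\infty,0]$ (by convexity and $\hat\phi'(0)<0$); therefore $\chi(\alpha)=\hat\phi(\hat\Phi(0)-\alpha)<0$ if and only if $\alpha<\hat\Phi(0)$, that is, if and only if $\hat\psi(\alpha)<0$, which is the full equivalence. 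For $q>0$ the reverse implication may fail: $\chi(\alpha)<0$ can hold for some $\alpha\ge\hat\Phi(q)$ when the large downward jumps of $\hat\xi$ are light enough that $\hat\phi$ stays below $q$ to the left of the origin (no condition on $\hat\xi$ at negative arguments is imposed here), so only sufficiency survives, exactly as stated.

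\emph{Main obstacle.} The technical crux is the excursion bookkeeping: justifying the two compensation-formula identities --- at the supremum of $\hat\xi$ and at the infimum of $Y$ --- with the correct local-time normalisation, and in particular verifying that the Esscher transform at $\hat\Phi(q)$, which is admissible precisely because $\hat\psi(\hat\Phi(q))=0$, preserves the finite excursions and kills off the infinite one. Once these structural facts are secured, the finiteness criterion reduces to the elementary statement $\int_0^\infty {\rm e}^{u\chi(\alpha)}\,{\rm d}u<\infty\Leftrightarrow\chi(\alpha)<0$, and the dichotomy between $q=0$ and $q>0$ is read directly off the sign of $\hat\phi$ on $(-\infty,0]$.
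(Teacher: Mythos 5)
Your argument is correct and follows the paper's skeleton for its first half — pass to the Lamperti representation, write $\hat\Theta=\int_0^{g}{\rm e}^{-\alpha\hat\xi_s}\,{\rm d}s$, decompose at the supremum of $\hat\xi$ using that the ascending ladder height of a spectrally negative process is a unit drift, and reduce everything (since the prefactor $\tfrac{1}{\alpha+\hat\Phi(q)}$, which is exactly the paper's $\tfrac1\alpha E_1[1-{\rm e}^{-\alpha\hat\chi}]$, is always finite) to the finiteness of the excursion functional $\mathbf n\big(\int_0^{\varsigma}{\rm e}^{\alpha\epsilon(s)}{\rm d}s;\varsigma<\mathbf e\big)$. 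Where you genuinely diverge is in how this quantity is handled: the paper evaluates it exactly, via Doney's limiting first-passage method and the Bertoin--Doney renewal-measure identity under $\mathbb P^{\hat\Phi(q)}$, obtaining $-\alpha/\hat\psi(\hat\Phi(q)-\alpha)$ and then arguing finiteness through monotonicity in $\alpha$; you instead observe that after the Esscher transform at $\hat\Phi(q)$ (which, because $\hat\psi(\hat\Phi(q))=0$ and finite excursions begin and end at the same supremum level, turns the surviving-excursion measure into the full excursion measure of the unkilled transformed process) the same excursion sum can be re-assembled, by a second compensation formula at the infimum of $Y=-\hat\xi$, into the exponential functional $E_0^{\hat\Phi(q)}\big[\int_0^\infty{\rm e}^{\alpha Y_u}{\rm d}u\big]$, which Tonelli evaluates as $\int_0^\infty{\rm e}^{u\chi(\alpha)}{\rm d}u$ with $\chi(\alpha)=\hat\psi(\hat\Phi(q)-\alpha)$. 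Both routes land on the same criterion, and your closing convexity analysis of $\hat\phi$ (full equivalence when $q=0$ because $\hat\phi>0$ on the negative half-line; only sufficiency when $q>0$ because the behaviour of $\hat\psi$ at negative arguments is not controlled) matches the paper's concluding remarks exactly. What your route buys is that it avoids scale functions, ladder renewal measures and the $\varepsilon\downarrow0$ limit, replacing them with a second, symmetric excursion decomposition plus Tonelli, at the cost of not producing the closed-form value $-\alpha/\hat\psi(\hat\Phi(q)-\alpha)$ of the excursion functional (which you correctly note is not needed for the finiteness statement); the excursion bookkeeping and the Esscher identification of excursion measures that you flag as the technical crux are asserted at essentially the same level of detail as in the paper's own proof, so there is no gap relative to the published argument.
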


\noindent Note also that, if $q=0$ and $X\in \hat{\mathcal{C}}$, then the issue of whether $E_x(\zeta)<\infty$ is irrelevant since $\zeta=\infty$ almost surely. On the other hand, when $q>0$ and $X\in\hat{\mathcal{C}}$, again noting that $\zeta = \int_0^{\mathbf{e}}{\rm e}^\alpha\xi_t{\rm d}t$, we see that $E_x(\zeta)<\infty$ for all $x>0$ if and only if $\psi(-\alpha)<0$, in which case one also has $E_x(\hat\Theta)<\infty$ for all $x>0$.

\section{Reduction to a one-dimensional problem}\label{c5_reduction}
\subsection{Reduction of problem~\eqref{c5_problem3}}\label{c5_redu_1}
The aim in this subsection is to reduce~\eqref{c5_problem3} to a one-dimensional optimal stopping problem.\\
\indent We begin by reducing~\eqref{c5_problem3} to an optimal stopping problem in which $X$ starts at $x=1$. More precisely, the self-similarity of $X$ implies that the process
 \begin{equation}
 \int_0^{t\wedge\zeta^{(x)}}F((s\vee\overline X^{(x)}_u)/X^{(x)}_u)\,{\rm d}u,\qquad t\geq 0,\label{c5_pro1}
 \end{equation}
is equal in law to the process
\begin{equation}
x^\alpha\int_0^{(x^{-\alpha}t)\wedge\zeta^{(1)}} F(((s/x)\vee \overline X^{(1)}_u)/X^{(1)}_u)\,{\rm d}u,\qquad t\geq 0.\label{c5_pro2}
\end{equation}
Note that the process in~\eqref{c5_pro1} is adapted to $\mathbb{G}$, whereas the process in~\eqref{c5_pro2} is adapted to $\tilde{\mathbb{G}}^{(x)}=\{\tilde{\mathcal{G}}^{(x)}_u:u\geq 0\}$, where $\tilde{\mathcal{G}}^{(x)}_u:=\mathcal{G}_{x^{-\alpha}u}$. 
We conclude that for $0<x\leq s$,
\begin{eqnarray*}
v(x,s)&=&\inf_\tau E_x[\int_0^{\tau\wedge\zeta} F((s\vee\overline X_t)/X_t)\,{\rm d}t]\\
&=&x^\alpha\inf_{\tau^\prime} E_1[\int_0^{(x^{-\alpha}\tau^\prime)\wedge\zeta} F(((s/x)\vee\overline X_t)/X_t)\,{\rm d}t],
\end{eqnarray*}
where the first infimum is taken over $\mathbb{G}$-stopping times $\tau$ and the second over $\tilde{\mathbb{G}}^{(x)}$-stopping times $\tau^\prime$. Before we can continue with the reduction of~\eqref{c5_problem3}, we need to introduce a new filtration $\mathbb{H}:=\{\mathcal{H}_t:t\geq 0\}$ in $\mathcal{G}$. Recall that the process
\begin{equation*}
\varphi(t)=\int_0^t(X^{(1)}_u)^{-\alpha}\,{\rm d}u,\qquad t<\zeta^{(1)},
\end{equation*}
is right-continuous and adapted to~$\mathbb{G}$. Then
\begin{equation*}
I_u=\inf\{0<t<\zeta^{(1)}:\varphi(t)>u\},\qquad u\geq 0,
\end{equation*}
is a right-continuous process which is strictly increasing on $[0,\varphi(\zeta^{(1)}-))$. In particular, $I_u$ is a $\mathbb{G}$-stopping time for each $u\geq 0$. We now use $I_u$, $u\geq 0$, to time-change the filtration $\mathbb{G}$ according to
\begin{equation}
\mathcal{H}_u:=\mathcal{G}_{I_u},\qquad u\geq 0.\label{c5_def_H}
\end{equation}
By Lemma 7.3 in~\cite{kallenberg} it follows that $\mathbb{H}$ is right-continuous. Also observe that the Lamperti representation $\xi$ is adapted to $\mathbb{H}$. Finally, denote by $\mathcal{M}_1^{(x)}$ the set of all $\tilde{\mathbb{G}}^{(x)}$-stopping times and by $\mathcal{M}_2$ the set of all $\mathbb{H}$-stopping times. As a final piece of notation before we formulate the main result of this subsection, define the measure $P^\alpha$ by
\begin{eqnarray}
\frac{{\rm d}P^\alpha}{{\rm d}P_1}\bigg\vert_{\mathcal{H}_t}={\rm e}^{\alpha\xi_t-\psi(\alpha)t}1_{\{t<\mathbf{e}\}}.\label{c5_change1}
\end{eqnarray}

\begin{lem}\label{c5_reduction_1}
Let $f(z)=1-2{\rm e}^{-\Phi(q)z}$, $z\geq 0$, where $\Phi$ and $q$ are as at the beginning of Subsection~\ref{c5_maxi}. For $0<x\leq s$, we have
\begin{eqnarray}
v(x,s)&=&x^{\alpha}\inf_{\tau^\prime\in\mathcal{M}^{(x)}_1}E_1[\int_0^{(x^{-\alpha}\tau^\prime)\wedge\zeta} F(((s/x)\vee\overline X_t)/X_t)\,{\rm d}t]\label{c5_problem5}\\*
&\geq&x^{\alpha}\inf_{\nu\in\mathcal{M}_2}E^\alpha[\int_0^{\nu}{\rm e}^{\psi(\alpha)u}f(Y^{\log(y)}_u)\,{\rm d}u],\label{c5_one_dim_1}
\end{eqnarray}
where $y=s/x$, $Y^{\log(y)}_u:=\log(y)\vee\overline\xi_u-\xi_u$ and $\overline\xi_u:=\sup_{0\leq t\leq u}\xi_t$ for $u\geq 0$. In particular, under $P^\alpha$ the spectrally negative L\'evy process $\xi$ is not killed.
\end{lem}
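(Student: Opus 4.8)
The first equality in the lemma, namely~\eqref{c5_problem5}, is simply the self-similarity reduction already carried out in the paragraph preceding the statement; the only thing I would add is the observation that, since $|F|\le 1$ on $[1,\infty)$, the integral $\int_0^{(x^{-\alpha}\tau')\wedge\zeta}F(((s/x)\vee\overline X_t)/X_t)\,{\rm d}t$ is dominated in absolute value by $\zeta$, which has finite mean under $P_1$ because $X\in\mathcal{C}^1$ forces $\psi(\alpha)<0$ (Theorem~\ref{meantheta} and the remark following it). Hence the quantity inside the infimum is finite for \emph{every} stopping time, so nothing is lost by taking the infimum over all of $\mathcal{M}_1^{(x)}$. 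The substantive content is the inequality~\eqref{c5_one_dim_1}, and the tool is the Lamperti time-change of Proposition~\ref{c5_Lamp}.

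For the inequality I would proceed as follows. Set $y=s/x$. Under $P_1$ the Lamperti representation gives $X_t=\exp(\xi_{\varphi(t)})$ for $t<\zeta$, with inverse time-change $I_w=\int_0^w{\rm e}^{\alpha\xi_s}\,{\rm d}s$; the maps $\varphi\colon[0,\zeta)\to[0,\mathbf{e})$ and $I\colon[0,\mathbf{e})\to[0,\zeta)$ are mutually inverse, continuous and strictly increasing, and I extend them by $\varphi(\zeta):=\mathbf{e}$, $I_{\mathbf{e}}:=\zeta$. It follows that $X_{I_w}={\rm e}^{\xi_w}$ and, since both the time-change and the exponential are increasing, that $\overline X_{I_w}={\rm e}^{\overline\xi_w}$ for $w<\mathbf{e}$. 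Fixing $\tau'\in\mathcal{M}_1^{(x)}$ and substituting $t=I_w$, ${\rm d}t={\rm e}^{\alpha\xi_w}\,{\rm d}w$, in the integral over $[0,(x^{-\alpha}\tau')\wedge\zeta]$, and using $(y\vee{\rm e}^{\overline\xi_w})/{\rm e}^{\xi_w}={\rm e}^{Y_w^{\log y}}$ together with $F({\rm e}^z)=1-2{\rm e}^{-\Phi(q)z}=f(z)$, I obtain
\[
\int_0^{(x^{-\alpha}\tau')\wedge\zeta}F\big((y\vee\overline X_t)/X_t\big)\,{\rm d}t=\int_0^{\nu}f(Y_w^{\log y}){\rm e}^{\alpha\xi_w}\,{\rm d}w,\qquad \nu:=\varphi\big((x^{-\alpha}\tau')\wedge\zeta\big)\in[0,\mathbf{e}].
\]
The key point is then that $\nu$ is an $\mathbb{H}$-stopping time: for $w<\mathbf{e}$ one has $\{\nu\le w\}=\{(x^{-\alpha}\tau')\wedge\zeta\le I_w\}\in\mathcal{G}_{I_w}=\mathcal{H}_w$, because $(x^{-\alpha}\tau')\wedge\zeta$ and $I_w$ are both $\mathbb{G}$-stopping times, while for $w\ge\mathbf{e}$ the event equals $\Omega$. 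Thus $\tau'\mapsto\varphi((x^{-\alpha}\tau')\wedge\zeta)$ maps $\mathcal{M}_1^{(x)}$ into $\mathcal{M}_2$, and taking infima gives the right-hand side of~\eqref{c5_one_dim_1} with $E_1$ and integrand $f(Y_w^{\log y}){\rm e}^{\alpha\xi_w}$ in place of $E^\alpha$ and ${\rm e}^{\psi(\alpha)w}f(Y_w^{\log y})$. Only an inclusion of stopping-time classes is available here, which is precisely why the displayed relation is an inequality: an $\mathbb{H}$-stopping time may exceed $\mathbf{e}$, whereas $\varphi((x^{-\alpha}\tau')\wedge\zeta)\le\mathbf{e}$ always.

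It then remains to change measure. For fixed $\nu\in\mathcal{M}_2$ I would write $\int_0^\nu f(Y_w^{\log y}){\rm e}^{\alpha\xi_w}\,{\rm d}w=\int_0^\infty 1_{\{w<\nu\}}f(Y_w^{\log y}){\rm e}^{\alpha\xi_w}1_{\{w<\mathbf{e}\}}\,{\rm d}w$ (using ${\rm e}^{\alpha\xi_w}=0$ on $\{w\ge\mathbf{e}\}$), note that $1_{\{w<\nu\}}f(Y_w^{\log y})$ is bounded and $\mathcal{H}_w$-measurable (as $\xi$, hence $\overline\xi$, is $\mathbb{H}$-adapted), apply Fubini — legitimate because $\int_0^\nu{\rm e}^{\alpha\xi_w}\,{\rm d}w=I_\nu\le\zeta\in L^1(P_1)$ — insert the Radon–Nikodym density ${\rm e}^{\alpha\xi_w-\psi(\alpha)w}1_{\{w<\mathbf{e}\}}$ from~\eqref{c5_change1}, and apply Fubini once more, now under $P^\alpha$ and valid since $\psi(\alpha)<0$ makes $\int_0^\infty{\rm e}^{\psi(\alpha)w}\,{\rm d}w$ finite, to reach $E^\alpha[\int_0^\nu{\rm e}^{\psi(\alpha)w}f(Y_w^{\log y})\,{\rm d}w]$; this yields~\eqref{c5_one_dim_1}. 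For the last assertion, $\alpha$ is an admissible tilting parameter because $\xi$ is spectrally negative, so $\psi$ is finite on $[0,\infty)$; by the exponential change of measure for killed Lévy processes (see~\eqref{c5_changeofmeasure} and Theorem~3.9 in~\cite{kyprianou}), under $P^\alpha$ the process $\xi$ is an unkilled Lévy process with Laplace exponent $\theta\mapsto\psi(\alpha+\theta)-\psi(\alpha)$, still spectrally negative since tilting does not affect the sign of the jumps; in particular $P^\alpha[\mathbf{e}=\infty]=1$.

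The steps I expect to be most delicate are the bookkeeping between the two time-scales — that $\varphi$ carries $[0,\zeta)$ onto $[0,\mathbf{e})$, that the time-changed running supremum is precisely ${\rm e}^{\overline\xi}$, and that the transformed upper limit $\varphi((x^{-\alpha}\tau')\wedge\zeta)$ is an $\mathbb{H}$-stopping time yet does not exhaust $\mathcal{M}_2$ — together with the two Fubini interchanges and the measure change; all the integrability used there ultimately rests on the standing assumption $\psi(\alpha)<0$, equivalently $E_1(\zeta)<\infty$.
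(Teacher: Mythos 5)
Your proposal is correct and follows essentially the same route as the paper: the self-similarity reduction for the equality, the Lamperti time-change $u=\varphi(t)$ (equivalently $t=I_w$) to pass to an integral of $f(Y^{\log y})\,{\rm e}^{\alpha\xi}$ up to the $\mathbb{H}$-stopping time $\varphi((x^{-\alpha}\tau')\wedge\zeta)$, enlargement of the stopping-time class to obtain the inequality, and then Fubini plus the change of measure \eqref{c5_change1} to arrive at \eqref{c5_one_dim_1}. The extra care you take with the stopping-time property of $\varphi((x^{-\alpha}\tau')\wedge\zeta)$ and the two Fubini interchanges is detail the paper leaves implicit, but it does not change the argument.
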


\begin{proof}
Using the fact that $\varphi$ is strictly increasing on $[0,\zeta)$ and the Lamperti transformation shows that for $\tau^\prime\in\mathcal{M}_1^{(x)}$,
 \begin{eqnarray}
&&E_1[\int_0^{(x^{-\alpha}\tau^\prime)\wedge\zeta}F((y\vee\overline X_t)/X_t)\,{\rm d}t]\label{c5_change_variable}\\
&&=E_1[\int_0^{(x^{-\alpha}\tau^\prime)\wedge\zeta}F((y\vee\overline X_t)/X_t)1_{\{t<\zeta\}}\,{\rm d}t]\notag\\
&&=E_1\big[\int_0^{(x^{-\alpha}\tau^\prime)\wedge\zeta} f\big(\log(y)\vee\overline \xi_{\varphi(t)}-\xi_{\varphi(t)}\big)1_{\{\varphi(t)<\varphi(\zeta)\}}\,{\rm d}t\big].\notag
\end{eqnarray}
Next, note that $
\varphi^\prime(t)=(X^{(1)}_t)^{-\alpha}={\rm e}^{-\alpha\xi_{\varphi(t)}}$ for $t<\zeta^{(1)}$. Hence, changing variables with $u=\varphi(t)$ shows that the right-hand side of~\eqref{c5_change_variable} is equal to
\begin{equation*}
E_1[\int_0^{\varphi((x^{-\alpha}\tau^\prime)\wedge\zeta)} {\rm e}^{\alpha\xi_u}f\big(\log(y)\vee\overline \xi_u-\xi_u\big)1_{\{u<\mathbf{e}\}}\,{\rm d}u].
\end{equation*}
As $\tau^\prime\in\mathcal{M}_1^{(x)}$, it follows that $\varphi((x^{-\alpha}\tau^\prime)\wedge\zeta)$ is a $\mathbb{H}$-stopping time that is less or equal than $\mathbf{e}$, and hence we conclude that
\begin{equation}
v(x,s)\geq x^{-\alpha}\inf_{\nu\in\mathcal{M}_2}E_1[\int_0^\nu {\rm e}^{\alpha\xi_u}f\big(\log(y)\vee\overline \xi_u-\xi_u\big)1_{\{u<\mathbf{e}\}}\,{\rm d}u].\label{c5_alt_expr}
\end{equation}
In other words, we have found a lower bound for $v(x,s)$ in terms of an optimal stopping problem for the Lamperti representation $\xi$ reflected at its maximum. Using Fubini's theorem and a change of measure according to~\eqref{c5_change1} yields for $\nu\in\mathcal{M}_2$,
\begin{eqnarray*}
&&E_1[\int_0^{\nu}{\rm e}^{\alpha\xi_u}f\big(\log(y)\vee\overline \xi_u-\xi_u\big)1_{\{u<\mathbf{e}\}}\,{\rm d}u]\notag\\
&&=\int_0^\infty E_1[{\rm e}^{\alpha\xi_u}f\big(\log(y)\vee\overline \xi_u-\xi_u\big)1_{\{u<\mathbf{e}\}}1_{\{u<\nu\}}]\,{\rm d}u\label{c5_switch}\\
&&=\int_0^\infty E^\alpha[{\rm e}^{\psi(\alpha)u}f\big(\log(y)\vee\overline \xi_u-\xi_u\big)1_{\{u<\nu\}}]\,{\rm d}u\notag\\
&&=E^\alpha[\int_0^{\nu}{\rm e}^{\psi(\alpha)u}f(Y^{\log(y)}_u)\,{\rm d}u].\notag
\end{eqnarray*}
Finally, note that the Laplace exponent of $\xi$ under $P^\alpha$ is given by the expression $\psi_\alpha(\theta)=\psi(\theta+\alpha)-\psi(\alpha)$, $\theta\geq 0$. In particular, $\psi_\alpha(0)=0$ and hence $\xi$ is not killed under $P^\alpha$.
\end{proof}

Despite the inequality in~\eqref{c5_one_dim_1}, we are in a good enough position with this lemma to deduce the solution of~\eqref{c5_problem3}. To see why, suppose that the optimal stopping time for~\eqref{c5_one_dim_1} is given~by
\begin{equation*}
\nu^*=\inf\{t\geq 0:Y_t^{\log(y)}\geq k^*\}
\end{equation*}
for some $k^*>0$. Additionally, setting $K^*:={\rm e}^{-k^*}$, define 
\begin{eqnarray*}
&&\tau^*=\inf\{t\geq 0:X_t\leq K^*(s\vee\overline X_t)\},\\
&&\tau^\prime=\inf\{t\geq 0:X_{x^{-\alpha}t}\leq K^*((s/x)\vee\overline X_{x^{-\alpha}t})\}.
\end{eqnarray*}
It then holds that
\begin{eqnarray*}
E_x\int_0^{\tau^*} F((s\vee\overline X_t)/X_t)\,{\rm d}t]&=&x^{\alpha}E_1[\int_0^{x^{-\alpha}\tau^\prime}F(((s/x)\vee\overline X_t)/X_t)\,{\rm d}t]\\
&=&x^{\alpha}E^\alpha[\int_0^{\nu^*}{\rm e}^{\psi(\alpha)t}f(Y^{\log(s/x)}_t)\,{\rm d}t]
\end{eqnarray*}
and thus $\tau^*$ is optimal for~\eqref{c5_problem3}. Hence it remains to show that the optimal stopping time for~\eqref{c5_one_dim_1} is indeed of the assumed form. This is done in Section~\ref{c5_key_problem}.

\subsection{Reduction of problem~\eqref{c5_problem4}}\label{c5_redu_2}
Analogously to the previous subsection, we want to reduce~\eqref{c5_problem4} to a one-dimensional optimal stopping problem.\\
\indent Let $\mathcal{M}^{(x)}_1$ be the set of all $\tilde{\mathbb{G}}^{(x)}$-stopping times and 
$\mathcal{M}_2$ the set of all $\mathbb{H}$-stopping times whenever $X\in\hat{\mathcal{C}}^1$ is of 
type~\eqref{c5_type3}. On the other hand, if $X\in\hat{\mathcal{C}}^1$ is of type~\eqref{c5_type1}, then 
denote by $\mathcal{M}^{(x)}_1$ the set of all integrable $\tilde{\mathbb{G}}^{(x)}$-stopping times 
and by $\mathcal{M}_2$ the set of all $\mathbb{H}$-stopping times $\nu$ such that
\begin{equation*}
\hat E^{-\alpha}[\int_0^\nu {\rm e}^{\hat\psi(-\alpha)t}\,{\rm d}t]<\infty,
\end{equation*}
where 
\begin{eqnarray}
\frac{{\rm d}\hat P^{-\alpha}}{{\rm d}P_1}\bigg\vert_{\mathcal{H}_t}={\rm e}^{-\alpha\hat\xi_t-\hat\psi(-\alpha)t}1_{\{t<\mathbf{e}\}}.\label{c5_change2}
\end{eqnarray}

 Following the same line of reasoning as in Subsection~\ref{c5_redu_1}, one may obtain the analogue of Lemma~\ref{c5_reduction_1}; see Lemma~\ref{c5_reduction_2} below. The only difference is that we express all in terms of the dual process $\hat\xi$ so that we obtain a one-dimensional optimal stopping problem in~\eqref{c5_one_dim_2} that is of the same type as in~\eqref{c5_one_dim_1} (a one-dimensional optimal stopping problem for a spectrally negative L\'evy process reflected at its supremum). The advantage of this is that once the one-dimensional problem is solved, we can deduce the solution for both~\eqref{c5_problem3} and~\eqref{c5_problem4}. Moreover, the fact that~\eqref{c5_one_dim_1} and~\eqref{c5_one_dim_2} only differ by switching to the dual essentially says that the problem of predicting the time at which the maximum or minimum is attained is, at least on the level of Lamperti representations, essentially the same.
\begin{lem}\label{c5_reduction_2}
Let $\hat f(z)=1-2{\rm e}^{-\hat\Phi(q)z}$, $z\geq 0$, where $\hat\Phi$ and $q$ are as at the beginning of Subsection~\ref{c5_mini}. 
For $0<i\leq $x, we have
\begin{eqnarray}
\hat v(x,i)&=&x^{\alpha}\inf_{\tau^\prime\in\mathcal{M}^{(x)}_1}E_1[\int_0^{x^{-\alpha}\tau^\prime\wedge\zeta}\hat F(X_t/(i\wedge\underline X_t))\,{\rm d}t]\label{c5_problem6}\\
&\geq&x^\alpha\inf_{\nu\in\mathcal{M}_2}\hat E^{-\alpha}[\int_0^{\nu} {\rm e}^{\hat\psi(-\alpha)u}\hat f(\hat Y^{\log(\hat y)}_t)\,{\rm d}u],\label{c5_one_dim_2}
\end{eqnarray}
where $\hat y=x/i$, $\hat Y^{\log(y)}_u:=\log(y)\vee\overline{\hat\xi_u}-\hat\xi_u$ and $\overline{\hat\xi_u}:=\sup_{0\leq t\leq u}\hat\xi_t$ for $u\geq 0$. In particular, under $\hat P^{-\alpha}$ the spectrally negative L\'evy process $\hat\xi$ is not killed.
\end{lem}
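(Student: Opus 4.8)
The plan is to mirror the proof of Lemma~\ref{c5_reduction_1} step by step, but to express everything in terms of the dual process $\hat\xi$ rather than $\xi$ itself. First I would record the self-similarity reduction: exactly as in the display preceding Lemma~\ref{c5_reduction_1}, the scaling property of $X$ gives
\begin{equation*}
\hat v(x,i)=x^\alpha\inf_{\tau'}E_1\big[\int_0^{(x^{-\alpha}\tau')\wedge\zeta}\hat F(X_t/((i/x)\wedge\underline X_t))\,{\rm d}t\big],
\end{equation*}
where the infimum runs over $\tilde{\mathbb{G}}^{(x)}$-stopping times $\tau'$ (or integrable ones in the type~\eqref{c5_type1} case); this yields the equality~\eqref{c5_problem6} after writing $\mathcal{M}_1^{(x)}$ for the relevant stopping-time class. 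Here I write $\hat y=x/i\geq 1$, so that $(i/x)\wedge\underline X_t = \hat y^{-1}\wedge \underline X_t$ and $X_t/((i/x)\wedge\underline X_t) = \hat y (X_t/(1\vee \hat y\underline X_t))$; in logarithmic variables the argument of $\hat F$ becomes $\xi_{\varphi(t)} - (\log(1/\hat y)\wedge \underline\xi_{\varphi(t)}) = \log(\hat y)\vee\overline{\hat\xi}_{\varphi(t)} - \hat\xi_{\varphi(t)} = \hat Y^{\log(\hat y)}_{\varphi(t)}$, using that $\hat\xi=-\xi$ on $[0,\mathbf e)$ so that $\overline{\hat\xi} = -\underline\xi$.

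Next I would carry out the Lamperti time change exactly as in~\eqref{c5_change_variable}: using that $\varphi$ is strictly increasing on $[0,\zeta)$ and that $\varphi'(t) = (X_t^{(1)})^{-\alpha} = {\rm e}^{-\alpha\xi_{\varphi(t)}} = {\rm e}^{\alpha\hat\xi_{\varphi(t)}}$ for $t<\zeta^{(1)}$, the substitution $u=\varphi(t)$ converts the $X$-integral into
\begin{equation*}
E_1\big[\int_0^{\varphi((x^{-\alpha}\tau')\wedge\zeta)}{\rm e}^{-\alpha\hat\xi_u}\hat f\big(\hat Y^{\log(\hat y)}_u\big)1_{\{u<\mathbf e\}}\,{\rm d}u\big],
\end{equation*}
where $\hat f(z)=1-2{\rm e}^{-\hat\Phi(q)z}$ is the transcription of $\hat F$ under $y=e^z$. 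Since $\tau'\in\mathcal M_1^{(x)}$ forces $\nu:=\varphi((x^{-\alpha}\tau')\wedge\zeta)$ to be an $\mathbb{H}$-stopping time bounded above by $\mathbf e$, taking the infimum over such $\tau'$ and enlarging to the infimum over all $\nu\in\mathcal M_2$ gives the inequality
\begin{equation*}
\hat v(x,i)\geq x^\alpha\inf_{\nu\in\mathcal M_2}E_1\big[\int_0^\nu {\rm e}^{-\alpha\hat\xi_u}\hat f(\hat Y^{\log(\hat y)}_u)1_{\{u<\mathbf e\}}\,{\rm d}u\big].
\end{equation*}
Finally, Fubini together with the change of measure~\eqref{c5_change2} — noting that ${\rm e}^{-\alpha\hat\xi_u - \hat\psi(-\alpha)u}1_{\{u<\mathbf e\}}$ is the relevant $P_1$-martingale density, which requires $\hat\psi(-\alpha)<\infty$, guaranteed by membership in $\hat{\mathcal C}^1$ — turns ${\rm e}^{-\alpha\hat\xi_u}1_{\{u<\mathbf e\}}\,{\rm d}P_1$ into ${\rm e}^{\hat\psi(-\alpha)u}\,{\rm d}\hat P^{-\alpha}$, yielding~\eqref{c5_one_dim_2}. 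That the Laplace exponent of $\hat\xi$ under $\hat P^{-\alpha}$ is $\hat\psi_{-\alpha}(\theta)=\hat\psi(\theta-\alpha)-\hat\psi(-\alpha)$ with $\hat\psi_{-\alpha}(0)=0$ shows $\hat\xi$ is unkilled under $\hat P^{-\alpha}$.

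The one genuinely delicate point, as opposed to routine transcription, is the bookkeeping in the type~\eqref{c5_type1} case (where $q=0$, $\zeta=\infty$): there one must check that the class $\mathcal M_2$ of $\mathbb H$-stopping times $\nu$ with $\hat E^{-\alpha}[\int_0^\nu {\rm e}^{\hat\psi(-\alpha)t}\,{\rm d}t]<\infty$ is exactly the image under $\nu=\varphi((x^{-\alpha}\tau')\wedge\zeta)$ of the integrable $\tilde{\mathbb G}^{(x)}$-stopping times, so that the passage from the second line of~\eqref{c5_problem6} to~\eqref{c5_one_dim_2} neither loses nor gains admissible strategies in a way that would invalidate later identification of the optimiser; this is where the finiteness condition $\hat\psi(-\alpha)<0$ in the definition of $\hat{\mathcal C}^1$ (when $q>0$) and the integrability bookkeeping interact, and it must be handled with the same care as the corresponding remark following Lemma~\ref{c5_reduction_1}. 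Everything else is a direct dualisation of the already-established argument.
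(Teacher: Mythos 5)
Your proposal is correct and follows precisely the route the paper intends: the paper omits the proof of Lemma~\ref{c5_reduction_2}, stating only that it follows the same line of reasoning as Lemma~\ref{c5_reduction_1} with everything expressed in terms of the dual $\hat\xi$, which is exactly what you carry out (self-similar scaling, Lamperti time change, the change of measure~\eqref{c5_change2}, the unkilled check, and the integrability bookkeeping for the type~(i) case). The only blemish is the intermediate identity $X_t/((i/x)\wedge\underline X_t)=\hat y\,X_t/(1\vee\hat y\,\underline X_t)$, where the $\vee$ should be a $\wedge$; your subsequent logarithmic identification of the argument of $\hat F$ with $\hat Y^{\log(\hat y)}_{\varphi(t)}$ is nevertheless correct.
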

Analogously to Subsection~\ref{c5_redu_1}, it follows that if the optimal stopping time for~\eqref{c5_one_dim_2} is given by $\nu^*=\inf\{t\geq0:Y_t^{\log(y)}\geq \hat{k}^*\}$ for some $\hat{k^*}>0$, then
\begin{equation*}
\hat\tau^*=\inf\{t\geq0:X_t\geq\hat K^*(i\wedge\underline X_t)\}
\end{equation*}
is optimal in~\eqref{c5_problem4}, where $\hat K^*:={\rm e}^{\hat k^*}$. The remaining task is again to solve~\eqref{c5_one_dim_2} and show that the optimal stopping time is indeed given by $\nu^*$. This is done in Section~\ref{c5_key_problem}.

\section{The one-dimensional optimal stopping problem}\label{c5_key_problem}
In this section we solve a separate optimal stopping problem which is set up in such a way that once it is solved one can use it to deduce the solution of~\eqref{c5_one_dim_1} and~\eqref{c5_one_dim_2} and hence the solution of~\eqref{c5_problem3} and~\eqref{c5_problem4} respectively. This section is self-contained and can be read completely independently of Sections~\ref{c5_refor_maxi_mini} and~\ref{c5_reduction}. Therefore, for convenience we will reuse some of the notation -- there should be no confusion.
\subsection{Setting and formulation of one-dimensional problem}
\indent Let us spend some time introducing the notation and formulating the problem. Suppose that $\Xi=\{\Xi_t:t\geq 0\}$ is an (unkilled) spectrally negative L\'evy process defined on a filtered probability space $(\Omega,\mathcal{F},\mathbb{F}:=\{\mathcal{F}_t:t\geq 0\},\tilde\P)$ satisfying the natural conditions; cf.~\cite{bichteler}, Section 1.3, p.39. For convenience we will assume without loss of generality that $(\Omega,\mathcal{F})=(\R^{[0,\infty)},\mathcal{B}^{[0,\infty)})$, where $\mathcal{B}$ is the Borel-$\sigma$-field on~$\R$. The coordinate process on $(\Omega,\mathcal{F})$ is denoted by $Y=\{Y_t:t\geq 0\}$. Further, let $q\geq 0$ and suppose that $\Xi$ under $\tilde\P$ is such that $\lim_{t\uparrow\infty}\Xi_t=-\infty$ whenever $q=0$. Also assume that the L\'evy measure associated with $\Xi$ has no atoms whenever $\Xi$ is of bounded variation. This is a purely technical condition which ensures that the $q$-scale functions $W^{(q)}$ associated with $\Xi$ are continuously differentiable on $(0,\infty)$; see~\eqref{c5_smoothness}. Next, let $\beta\in\R\setminus\{0\}$ such that $\tilde\E[{\rm e}^{\beta\Xi_1}]<\infty$. This condition is automatically satisfied if $\beta>0$ due to the spectral negativity of~$\Xi$ and hence it is only an additional assumption when $\beta<0$. The Laplace exponent is given by
\begin{equation*}
\phi(\theta):=\log(\tilde\E[{\rm e}^{\theta\Xi_1}]),\qquad\theta\geq 0\wedge\beta,
\end{equation*}
and its right-inverse is defined as
\begin{equation*}
\Phi(\lambda):=\sup\{\theta\geq 0:\phi(\theta)=\lambda\},\qquad\lambda\geq 0.
\end{equation*}
In particular, note that $\Phi(q)>0$ and define 
\begin{equation*}
f(y):=1-2{\rm e}^{-\Phi(q)y},\qquad y\geq 0.
\end{equation*}
Moreover, denote by $\tilde\P^\beta$ the measure obtained by the change of measure
\begin{equation*}
\frac{{\rm d}\tilde\P^\beta}{{\rm d}\tilde\P}\bigg\vert_{\mathcal{F}_t}={\rm e}^{\beta\Xi_t-\phi(\beta)t},\qquad t\geq 0.
\end{equation*}
Finally, for $y\geq 0$, let $\P^\beta_y$ be the law of
\begin{equation*}
y\vee\sup_{0\leq u\leq t}\Xi_u-\Xi_t,\qquad t\geq 0,
\end{equation*}
under $\tilde\P^\beta$.\\
\indent We are interested in the optimal stopping problem
\begin{equation}
V^*(y):=\inf_{\tau\in\mathcal{M}}\E_y^\beta[\int_0^\tau {\rm e}^{-qt+\phi(\beta)t}f(Y_t)\,{\rm d}t]\label{c5_problem7}
\end{equation}
for $y\geq 0$ and $(q,\beta)\in\mathcal{A}$, where
\begin{equation*}
\mathcal{A}:=\{(q,\beta)\in[0,\infty)\times\R\setminus\{0\}: q>\phi(\beta)\text{ or $q=0$ and $\beta<0$}\},
\end{equation*}
and the set $\mathcal{M}$ denotes the set of $\mathbb{F}$-stopping times such that
\begin{equation}
\E_y^\beta[\int_0^\tau {\rm e}^{-qt+\phi(\beta)t}\,{\rm d}t]<\infty.\label{c5_int_cond}
\end{equation}
Note that $\mathcal{M}$ is the set of all $\mathbb{F}$-stopping times except when $q=0$ and $\beta<0$ in which case~\eqref{c5_int_cond} is indeed a restriction because $\phi(\beta)>0$ due to the assumption that $\lim_{t\uparrow\infty}\Xi_t=-\infty$.

\subsection{Solution of one-dimensional problem}
\indent Given the underlying Markovian structure of~\eqref{c5_problem7}, it is reasonable to look for an optimal stopping time of the form
\begin{equation*}
\tau_k=\inf\{t\geq0:Y_t\geq k\},\qquad k>0.
\end{equation*}
However, when $q=0$ and $\beta<0$, we need to check whether $\tau_k\in\mathcal{M}$.
\begin{lem}\label{c5_adm}
Let $k>0$. If $q=0$ and $\beta<0$ (and hence $\phi(\beta)>0$), it holds that $\E_y^\beta[\int_0^{\tau_k}{\rm e}^{\phi(\beta)t}\,{\rm d}t]<\infty$ for all $y\geq 0$.
\end{lem}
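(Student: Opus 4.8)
The plan is to bound the expected functional by a quantity that is manifestly finite using the change of measure together with known fluctuation identities for the reflected process. First I would observe that under $\tilde\P^\beta$ with $\beta<0$, the process $\Xi$ is still a spectrally negative L\'evy process (with Laplace exponent $\phi_\beta(\theta)=\phi(\theta+\beta)-\phi(\beta)$), but now it drifts to $+\infty$; this is because $\phi_\beta'(0+)=\phi'(\beta)$ and $\beta<0$ lies to the left of the (strictly positive) root of $\phi$, where $\phi$ is decreasing. Hence $\overline\Xi_t-\Xi_t$ is a positive recurrent reflected process under $\tilde\P^\beta$, and the passage time $\tau_k=\inf\{t\geq0:Y_t\geq k\}$ of the reflected process $Y_t=y\vee\overline\Xi_t-\Xi_t$ above level $k$ is almost surely finite. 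The real content, however, is the integrability of $\int_0^{\tau_k}\mathrm{e}^{\phi(\beta)t}\,{\rm d}t$, which grows exponentially in $\tau_k$, so finiteness of $\tau_k$ alone is not enough.

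The key step is therefore to control $\tilde\E^\beta_y[\mathrm{e}^{\phi(\beta)\tau_k}]$, or equivalently $\mathcal{F}_y^\beta\big[\int_0^{\tau_k}\mathrm{e}^{\phi(\beta)t}\,{\rm d}t\big]=\phi(\beta)^{-1}\big(\tilde\E^\beta_y[\mathrm{e}^{\phi(\beta)\tau_k}]-1\big)$. I would compute this expectation by undoing the change of measure: for any $y\in[0,k]$,
\begin{equation*}
\tilde\E^\beta_y\big[\mathrm{e}^{\phi(\beta)\tau_k}\big]=\tilde\E\big[\mathrm{e}^{\phi(\beta)\tau_k}\mathrm{e}^{\beta\Xi_{\tau_k}-\phi(\beta)\tau_k}\big]=\tilde\E\big[\mathrm{e}^{\beta\Xi_{\tau_k}}\big],
\end{equation*}
where on the event under consideration $\tau_k$ is the first time the reflected process (under the original measure $\tilde\P$, started from $y\vee\overline\Xi_0=y$) reaches $k$; the martingale factor cancels the exponential weight exactly. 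Now on $\{\tau_k<\infty\}$ we have $Y_{\tau_k}\geq k$, i.e. $\overline\Xi_{\tau_k}-\Xi_{\tau_k}\geq k$ (for $y\leq k$ the running max at time $\tau_k$ has already moved past $y$), so $\beta\Xi_{\tau_k}\leq \beta(\overline\Xi_{\tau_k}-k)$ since $\beta<0$; because $\overline\Xi_{\tau_k}\geq y\geq 0$ this gives $\mathrm{e}^{\beta\Xi_{\tau_k}}\leq\mathrm{e}^{-\beta k}\,\mathrm{e}^{\beta\overline\Xi_{\tau_k}}\leq\mathrm{e}^{-\beta k}$. Hence $\tilde\E^\beta_y[\mathrm{e}^{\phi(\beta)\tau_k}]\leq\mathrm{e}^{-\beta k}<\infty$, and dividing by $\phi(\beta)>0$ yields the claim for $y\in[0,k]$. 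For $y> k$ the stopping time $\tau_k$ is identically zero (since $Y_0=y\geq k$) and the integral vanishes, so the bound holds trivially there too.

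The main obstacle I anticipate is being careful about the overshoot of the running maximum: at time $\tau_k$ the process $\overline\Xi_{\tau_k}$ is not simply $y+k$ but $y\vee\overline\Xi_{\tau_k}$, and one must verify that the decomposition $Y_{\tau_k}=y\vee\overline\Xi_{\tau_k}-\Xi_{\tau_k}\geq k$ genuinely forces $\Xi_{\tau_k}\leq \overline\Xi_{\tau_k}-k$ with $\overline\Xi_{\tau_k}\geq 0$, which is what makes the sign of $\beta$ work in our favour when we bound $\mathrm{e}^{\beta\Xi_{\tau_k}}$. A secondary point is to justify the first equality in the displayed computation, namely that optional stopping applies to the exponential martingale $\mathrm{e}^{\beta\Xi_t-\phi(\beta)t}$ at the (possibly unbounded) time $\tau_k$; this follows from the bound just derived together with a standard uniform-integrability / localization argument (stop at $\tau_k\wedge n$, use the bound $\mathrm{e}^{\beta\Xi_{t}}\leq \mathrm{e}^{-\beta\overline\Xi_t}\cdot\mathrm{e}^{\ldots}$ to pass to the limit by dominated convergence). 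Once these two points are handled, the lemma follows immediately.
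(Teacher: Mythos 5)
Your reduction to $\tilde\E[{\rm e}^{\beta\Xi_{\tau_k}}]$ follows the paper exactly: both you and the authors write $\E_y^\beta[\int_0^{\tau_k}{\rm e}^{\phi(\beta)t}\,{\rm d}t]=\phi(\beta)^{-1}\big(\E_y^\beta[{\rm e}^{\phi(\beta)\tau_k}]-1\big)$ and then undo the change of measure at the stopping time. The problem is your final estimate. From $Y_{\tau_k}=y\vee\overline\Xi_{\tau_k}-\Xi_{\tau_k}\geq k$ you get $\Xi_{\tau_k}\leq y\vee\overline\Xi_{\tau_k}-k$; multiplying by $\beta<0$ \emph{reverses} this inequality, so what follows is $\beta\Xi_{\tau_k}\geq\beta\big(y\vee\overline\Xi_{\tau_k}-k\big)$, a \emph{lower} bound on ${\rm e}^{\beta\Xi_{\tau_k}}$, not the upper bound ${\rm e}^{\beta\Xi_{\tau_k}}\leq{\rm e}^{-\beta k}$ you claim. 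Indeed, your claimed inequality is equivalent (divide by $\beta<0$ and rearrange) to $Y_{\tau_k}\leq k$, which together with $Y_{\tau_k}\geq k$ forces $Y_{\tau_k}=k$: it holds only when there is no overshoot. Since $\Xi$ is spectrally negative, the reflected process typically does overshoot the level $k$ via a downward jump of $\Xi$, and then ${\rm e}^{\beta\Xi_{\tau_k}}$ can be arbitrarily large. This is exactly the overshoot issue you flag at the end of your write-up, but your treatment of it does not close the gap.

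The correct estimate, which is what the paper uses, is
\begin{equation*}
{\rm e}^{\beta\Xi_{\tau_k}}={\rm e}^{-\beta\left(y\vee\overline\Xi_{\tau_k}-\Xi_{\tau_k}\right)}\,{\rm e}^{\beta\left(y\vee\overline\Xi_{\tau_k}\right)}\leq{\rm e}^{-\beta Y_{\tau_k}},
\end{equation*}
using $\beta<0$ and $y\vee\overline\Xi_{\tau_k}\geq 0$. One is then left with the exponential moment of the overshoot, $\tilde\E\big[{\rm e}^{-\beta Y_{\tau_k}}\big]$, whose finiteness is not elementary: it relies on the standing assumption $\tilde\E[{\rm e}^{\beta\Xi_1}]<\infty$ (an exponential moment condition on the negative jumps) and is read off from the explicit scale-function expression for the joint transform of $(\tau_k, Y_{\tau_k})$ in Theorem~1 of~\cite{exitproblems}. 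Without such a fluctuation identity, or at least a direct estimate on the overshoot distribution in terms of the L\'evy measure, the lemma does not follow. A smaller point: under $\tilde\P^\beta$ with $\beta<0$ the process $\Xi$ drifts to $-\infty$ (since $\phi'(\beta)<0$ there), not to $+\infty$ as you assert; this still gives $\tau_k<\infty$ almost surely, so it does not affect the argument, but the reasoning should be corrected.
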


\begin{proof}
Throughout this proof, let $\overline\Xi_t:=\sup_{0\leq u\leq t}\Xi_u$, $t\geq 0$, and write $\tau_{k,y}:=\inf\{t\geq0:y\vee\overline\Xi_t-\Xi_t\geq k\}$ for $y\geq 0$. If $y\geq k$ the assertion is clearly true and hence suppose that $y<k$. Using the fact that $\beta<0$ in the second inequality, we have
\begin{eqnarray*}
\E_y^\beta[\int_0^{\tau_k}{\rm e}^{\phi(\beta)t}\,{\rm d}t]&=&\E_y^\beta\bigg[\frac{{\rm e}^{\tau_k\phi(\beta)}}{\phi(\beta)}\bigg]-\frac{1}{\phi(\beta)}\\
&\leq&\phi(\beta)^{-1}\E_y^\beta[{\rm e}^{\tau_k\phi(\beta)}]\\
&=&\phi(\beta)^{-1}\tilde\E[{\rm e}^{\beta\Xi_{\tau_{k,y}}}]\\
&=&\phi(\beta)^{-1}\tilde\E\big[{\rm e}^{-\beta(y\vee\overline\Xi_{\tau_{k,y}}-\Xi_{\tau_k,y})+\beta(y\vee\overline\Xi_{\tau_{k,y}})}\big]\\
&\leq&\phi(\beta)^{-1}\tilde\E\big[{\rm e}^{-\beta(y\vee\overline\Xi_{\tau_{k,y}}-\Xi_{\tau_{k,y}})}\big].
\end{eqnarray*}
It is now shown in Theorem 1 in~\cite{exitproblems} that the expression on the right-hand side is finite.
\end{proof}

The next question we address is what the value function associated with the stopping times $\tau_k$ looks like. To this end, introduce the quantity
\begin{equation*}
V_k(y):=\E_y^\beta[\int_0^{\tau_k} {\rm e}^{-qt+\phi(\beta)t}f(Y_t)\,{\rm d}t],\qquad y\geq 0.
\end{equation*}
The next result gives an expression for $V_k$ in terms of scale functions.
\begin{lem}\label{c5_v_k}
For $k\geq 0$, we have
\begin{eqnarray}
V_k(y)&=&-\int_y^kf(z)\WW{q-\phi(\beta)}{z-y}{\beta}\,{\rm d}z\notag\\
&&+\frac{\WW{q-\phi(\beta)}{k-y}{\beta}}{\dWW{q-\psi(\beta)}{k}{\beta}}\bigg(\int_0^kf(z)\dWW{q-\phi(\beta)}{z}{\beta}\,{\rm d}z-\WW{q-\phi(\beta)}{0}{\beta}\bigg).\label{c5_value_functions}
\end{eqnarray}
\end{lem}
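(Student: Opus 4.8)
The plan is to compute $V_k(y)$ directly using the exit theory for the reflected process $Y_t = y\vee\overline\Xi_t - \Xi_t$ under $\tilde\P^\beta$, combined with the change-of-measure relation~\eqref{c5_scale5} which allows us to express everything in terms of scale functions associated with $\Xi$ under $\tilde\P^\beta$ (i.e., the $\WW{\cdot}{\cdot}{\beta}$ family). First I would reduce to the case $y=0$ is not possible directly, so instead I would work with general $y\in[0,k)$ (the case $y\geq k$ being trivial since then $\tau_k=0$ and $V_k(y)=0$, which one should check is consistent with the formula). The key observation is that $\tau_k$ is the first passage time of the process $Y$ above level $k$, and $Y$ is a spectrally negative L\'evy process $-\Xi$ reflected at its running infimum, started from $y$; equivalently $\tau_k = \inf\{t\geq 0: \Xi_t \leq (y\vee\overline\Xi_t) - k\}$.

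The main computational step is a Fubini/occupation-density argument: writing
\begin{equation*}
V_k(y) = \int_0^\infty \E_y^\beta\big[{\rm e}^{-qt+\phi(\beta)t} f(Y_t) 1_{\{t<\tau_k\}}\big]\,{\rm d}t = \int_0^k f(z)\, u^{(q-\phi(\beta))}(y,z)\,{\rm d}z,
\end{equation*}
where $u^{(\eta)}(y,z)$ is the $\eta$-resolvent density (with $\eta = q-\phi(\beta)$) of the process $Y$ killed upon leaving $[0,k)$, i.e. of $\Xi$ reflected at its supremum and killed at $\tau_k$. The point is that when $q>\phi(\beta)$ we have $\eta>0$ and this is the classical $\eta$-resolvent of a reflected spectrally negative L\'evy process killed on exiting an interval, whose density is explicitly known in terms of scale functions; when $q=0,\beta<0$ we have $\eta=-\phi(\beta)<0$, and one needs the analytic extension of $\eta\mapsto W^{(\eta)}$ to negative arguments, which is exactly the extension discussed after~\eqref{c5_scale5}. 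The standard formula (see e.g. the exit/resolvent results for reflected spectrally negative L\'evy processes, e.g.\ in~\cite{kyprianou}) gives
\begin{equation*}
u^{(\eta)}(y,z) = \frac{\WW{\eta}{k-y}{\beta}\,\dWW{\eta}{z}{\beta}}{\dWW{\eta}{k}{\beta}} - \WW{\eta}{z-y}{\beta},
\end{equation*}
for $0\leq y,z\leq k$; substituting $\eta = q-\phi(\beta)$ and integrating against $f(z)\,{\rm d}z$ over $[0,k]$ yields exactly~\eqref{c5_value_functions}, after noting $\WW{\eta}{z-y}{\beta}=0$ for $z<y$ so that $\int_0^k$ becomes $\int_y^k$ in the first term. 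I would state this resolvent density as the one genuinely external input and verify it carefully against the references rather than rederive it.

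The hard part, and the place where care is needed, is twofold. First, the passage from $\Xi$-scale functions $W^{(\cdot)}$ (with exponent $\phi$) to the $\beta$-tilted scale functions $\WW{\cdot}{\cdot}{\beta}$: under $\tilde\P^\beta$ the process $\Xi$ has Laplace exponent $\phi_\beta(\theta)=\phi(\beta+\theta)-\phi(\beta)$, and the factor ${\rm e}^{\phi(\beta)t-qt}$ in the integrand is precisely ${\rm e}^{-(q-\phi(\beta))t}$, so the discounting rate relative to $\tilde\P^\beta$ is $\eta=q-\phi(\beta)$ and the relevant scale functions are $\WW{\eta}{\cdot}{\beta}$ — one must make sure the indices match throughout (the statement writes $\dWW{q-\psi(\beta)}{k}{\beta}$ in one denominator, which, given $\psi=\phi$ here since $\Xi$ is unkilled, should read $q-\phi(\beta)$; I would flag this as a typo). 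Second, justifying Fubini and the finiteness of the resolvent integral: for $q>\phi(\beta)$ this is immediate, but for $q=0,\beta<0$ one must invoke Lemma~\ref{c5_adm} (which gives $\E_y^\beta[\int_0^{\tau_k}{\rm e}^{\phi(\beta)t}\,{\rm d}t]<\infty$) together with the boundedness of $f$ on $[0,k]$ to guarantee absolute integrability, so that the interchange of expectation and time-integral is legitimate and the formula makes sense. Modulo these bookkeeping points and the cited resolvent identity, the proof is a direct computation.
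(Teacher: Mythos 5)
Your overall strategy coincides with the paper's: represent $V_k(y)$ as the integral of $f$ against the $(q-\phi(\beta))$-potential measure of the reflected process killed on first passage above $k$, and read that measure off from the exit theory for reflected spectrally negative L\'evy processes (Theorem 8.11 of \cite{kyprianou}); you also correctly flag the typo $q-\psi(\beta)$ for $q-\phi(\beta)$. However, there are two genuine gaps. First, the potential measure $U^{(\eta)}(y,{\rm d}z)$ of $Y$ on $[0,k)$ is \emph{not} absolutely continuous: it carries an atom at $z=0$, namely $\WW{\eta}{k-y}{\beta}\WW{\eta}{0}{\beta}\,\delta_0({\rm d}z)/\dWW{\eta}{k}{\beta}$, reflecting the positive time the reflected process spends at the origin when $\Xi$ has bounded variation. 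Since $f(0)=-1$, it is precisely this atom that produces the $-\WW{q-\phi(\beta)}{0}{\beta}$ term inside the parentheses of \eqref{c5_value_functions}. Your proposed density, integrated against $f$, misses that term, so the computation does not in fact ``yield exactly'' \eqref{c5_value_functions}; it does so only in the unbounded variation case, where $\WW{\eta}{0}{\beta}=0$ by \eqref{c5_continuityatorigin}.

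Second, when $q=0$ and $\beta<0$ the discount rate is $\eta=q-\phi(\beta)=-\phi(\beta)<0$, and the potential-density formula you cite is established in the references only for $\eta\geq 0$. Knowing that the left-hand side is finite (Lemma \ref{c5_adm}) and that the right-hand side makes sense via the analytic extension of $\eta\mapsto W^{(\eta)}$ does not by itself give the \emph{equality} of the two sides for negative $\eta$; you have only argued that both sides are well defined. The paper closes this by an analytic continuation argument: the map $w\mapsto \E_y^\beta[\int_0^{\tau_k}{\rm e}^{-wt}f(Y_t)\,{\rm d}t]$ is shown to be analytic on $\{w:\mathfrak{Re}(w)>-\phi(\beta)\}$ (via uniform approximation by the truncated expectations on $\{\tau_k\leq n\}$, using exactly the bound of Lemma \ref{c5_adm}, and Weierstrass' theorem), the scale-function expression is entire in $\eta$, the two agree for $\eta\geq 0$, hence agree on the half-plane by the identity theorem, and the boundary value $\eta=-\phi(\beta)$ is recovered by dominated convergence. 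You have identified the right ingredients, but this step must actually be carried out rather than absorbed into ``the formula makes sense''.
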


\begin{proof}
Define for $\eta\geq 0$ the functions
\begin{equation*}
\tilde V_k(y):=\E_y^\beta[\int_0^{\tau_k} {\rm e}^{-\eta t}f(Y_t)\,{\rm d}t].
\end{equation*}
Now recall from Theorem 8.11 in~\cite{kyprianou} that the density of the $\eta$-potential measure of $Y$ upon leaving $[0,k)$ under $\P_y^\beta$ is, for $y,z\in[0,k]$, given by
\begin{eqnarray}
U^{(\eta)}(y,dz)&=&\bigg(\WW{\eta}{k-y}{\beta}\frac{\dWW{\eta}{z}{\beta}}{\dWW{\eta}{k}{\beta}}-\WW{\eta}{z-y}{\beta}\bigg)dz\notag\\
&&+\WW{\eta}{k-y}{\beta}\frac{\WW{\eta}{0}{\beta}}{\dWW{\eta}{k}{\beta}}\delta_0({\rm d}z).\label{lebae}
\end{eqnarray}
Here and for the remainder of this section, unless otherwise stated, all derivatives of scale functions will be understood as the right limit of their densities with respect to Lebesgue measure.
Using the expression in (\ref{lebae}), we see that for $y\geq 0$,
\begin{eqnarray}
\tilde V_k(y)&=&\int_0^k f(z)\Bigg(\WW{\eta}{k-y}{\beta}\frac{\dWW{\eta}{z}{\beta}}{\dWW{\eta}{k}{\beta}}-\WW{\eta}{z-y}{\beta}\Bigg)\,{\rm d}z\notag\\
&&-\WW{\eta}{k-y}{\beta}\frac{\WW{\eta}{0}{\beta}}{\dWW{\eta}{k}{\beta}}\label{c5_value_functions_1}.
\end{eqnarray}
If $(q,\beta)\in\mathcal{A}$ is such that $q>\phi(\beta)$ the result follows by setting $\eta=q-\phi(\beta)$. Hence, the remaining case is when $q=0$ and $\beta<0$ (and hence $\phi(\beta)>0$). In this case, note that by Lemma~\ref{c5_adm} we have for any $w\in U:=\{z\in\mathbb{C}:\mathfrak{Re}(z)>-\phi(\beta)\}$,
\begin{equation*}
\vert\E_y^\beta[\int_0^{\tau_k} {\rm e}^{-wt}f(Y_t)\,{\rm d}t]\vert\leq\E_y^\beta[\int_0^{\tau_k} {\rm e}^{\phi(\beta)t}\,{\rm d}t]<\infty.
\end{equation*}
Now define for $w\in U$ the functions
\begin{eqnarray*}
&&g(w):=\E_y^\beta[\int_0^{\tau_k} {\rm e}^{-wt}f(Y_t)\,{\rm d}t]\quad\text{and}\\
&&g_n(w):=\E_y^\beta[\int_0^{\tau_k} {\rm e}^{-wt}f(Y_t)\,{\rm d}t1_{\{\tau_k\leq n\}}],\qquad n\geq 0.
\end{eqnarray*}
The functions $g_n$ are analytic in $U$ since one can differentiate under the integral sign. Moreover, for $w\in U$ we have the estimate
\begin{equation*}
\vert g(w)-g_n(w)\vert \leq\E_y^\beta[\int_0^{\tau_k} {\rm e}^{\phi(\beta)t}\,{\rm d}t1_{\{\tau_k>n\}}]
\end{equation*}
which together with the fact that the right-hand side tends to zero as $n\uparrow\infty$ implies that $g_n$ converges uniformly to $g$ in $U$. Thus, Weierstrass' theorem shows that $g$ is analytic in $U$. Next, we deal with the right-hand side of~\eqref{c5_value_functions_1}. From the series representation of $\W{q}{x}$ provided in the proof of Lemma 3.6 in~\cite{KuzKypRiv}, it is possible to show that (after some work) the right-hand side of~\eqref{c5_value_functions_1} is also analytic (on the whole of $\mathbb{C}$). By the identity theorem it then follows that~\eqref{c5_value_functions_1} holds for $\eta\in U$, in particular for real $\eta$ such that $\eta>-\phi(\beta)$. Finally, to obtain the result for $\eta=-\phi(\beta)$, take limits on both sides of~\eqref{c5_value_functions_1} and use dominated convergence on the left-hand side and analyticity on the right-hand side. This completes the proof.
\end{proof}

Having this semi-explicit form for $V_k$, the next step is to find the ``good'' threshold $k>0$. This is done using the principle of smooth or continuous fit (cf.~\cite{mikhalevich,pes_shir,peskir}) which suggests choosing $k$ such that $\lim_{y\uparrow k}V^\prime_k(y)=0$ if $\Xi$ is of unbounded variation and $\lim_{y\uparrow k}V_k(y)=0$ if $\Xi$ is of bounded variation. Note that, although the smooth or continuous fit condition is not necessarily part of the general theory of optimal stopping, it is imposed by the ``rule of thumb'' outlined in Section 7 of~\cite{some_remarks}.\\
\indent First assume that~$\Xi$ is of unbounded variation. In that case, we know that scale functions are continuously differentiable on $(0,\infty)$.  Using~\eqref{c5_scale5} and~\eqref{c5_continuityatorigin}, it follows that
\begin{eqnarray*}
V^\prime_k(y)&=&\int_y^kf(z)\dWW{q-\phi(\beta)}{z-y}{\beta}\,{\rm d}z-\frac{\dWW{q-\phi(\beta)}{k-y}{\beta}}{\dWW{q-\phi(\beta)}{k}{\beta}}\int_0^kf(z)\dWW{q-\phi(\beta)}{z}{\beta}\,{\rm d}z.
\end{eqnarray*}
Letting $y$ tend to $k$ yields
\begin{equation}
0=\lim_{y\uparrow k}\frac{\dWW{q-\phi(\beta)}{k-y}{\beta}}{\dWW{q-\phi(\beta)}{k}{\beta}}\int_0^kf(z)\dWW{q-\phi(\beta)}{z}{\beta}\,{\rm d}z.\label{c5_conc}
\end{equation}
Now note that by~\eqref{c5_scale5} and~\eqref{c5_derivativeatorigin} we have
\begin{equation*}
\lim_{y\uparrow k}\dWW{q-\phi(\beta)}{k-y}{\beta}=\lim_{y\uparrow k}{\rm e}^{-\beta(k-y)}(\dW{q}{k-y}-\beta\W{q}{k-y})\in(0,\infty].
\end{equation*}
Similarly, $\dWW{q-\phi(\beta)}{k}{\beta}={\rm e}^{-\beta k}(\dW{q}{k}-\beta\W{q}{k})$ which is clearly positive if $\beta<0$. If $\beta>0$, this is still true because $\dW{q}{z}/\W{q}{z}>\Phi(q)$
 for $z>0$ and $\Phi(q)>\beta$. In view of~\eqref{c5_conc}, we are forced to conclude that
\begin{equation*}
\int_0^kf(z)\dWW{q-\phi(\beta)}{z}{\beta}\,{\rm d}z=0.
\end{equation*}
Similarly, if $\Xi$ is of bounded variation, we get
\begin{equation*}
0=\frac{\WW{q-\phi(\beta)}{0}{\beta}}{\dWW{q-\phi(\beta)}{k}{\beta}}\bigg(\int_0^kf(z)\dWW{q-\phi(\beta)}{z}{\beta}\,{\rm d}z-\WW{q-\phi(\beta)}{0}{\beta}\bigg)
\end{equation*}
and hence, using~\eqref{c5_scale5} and~\eqref{c5_continuityatorigin}, we infer
\begin{equation}
\int_0^kf(z)\dWW{q-\phi(\beta)}{z}{\beta}\,{\rm d}z=\W{q}{0}.\label{c5_cond_k}
\end{equation}
Summing up, irrespective of the path variation of $\Xi$, we expect the optimal $k>0$ to solve~\eqref{c5_cond_k} and therefore we need to investigate the equation more closely.
\begin{lem}\label{c5_root}
The equation
\begin{equation}
h(k):=\int_0^kf(z)\dWW{q-\phi(\beta)}{z}{\beta}\,{\rm d}z-\W{q}{0}=0\label{c5_k_eq}
\end{equation}
has a unique solution $k^*$ on $(0,\infty)$. In particular, $k^*>\log(2)/\Phi(q)$.
\end{lem}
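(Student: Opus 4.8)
The plan is to study the function $h(k)=\int_0^k f(z)\,\dWW{q-\phi(\beta)}{z}{\beta}\,{\rm d}z-\W{q}{0}$ directly via its sign changes and monotonicity behaviour. First I would record the elementary facts about the integrand. Recall $f(z)=1-2{\rm e}^{-\Phi(q)z}$, which is negative for $z<\log(2)/\Phi(q)$, vanishes at $z_0:=\log(2)/\Phi(q)$, and is strictly increasing to $1$ thereafter. On the other side, $\dWW{q-\phi(\beta)}{z}{\beta}$ is the right-derivative of a scale function; by the change-of-measure relation~\eqref{c5_scale5}, $\WW{q-\phi(\beta)}{z}{\beta}={\rm e}^{-\beta z}\W{q}{z}$, and since $(q,\beta)\in\mathcal A$ forces $q-\phi(\beta)>0$ (or $q=0,\beta<0$, in which case $-\phi(\beta)<0$ but still $\WW{q-\phi(\beta)}{\cdot}{\beta}$ is a genuine scale function for $\xi$ under $\P^\beta$), this is strictly positive on $(0,\infty)$. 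Hence the integrand $f(z)\dWW{q-\phi(\beta)}{z}{\beta}$ is strictly negative on $(0,z_0)$ and strictly positive on $(z_0,\infty)$.

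From this the behaviour of $h$ follows almost immediately. On $(0,z_0)$ the integral $\int_0^k f(z)\dWW{q-\phi(\beta)}{z}{\beta}\,{\rm d}z$ is strictly decreasing in $k$, and on $(z_0,\infty)$ it is strictly increasing. Therefore $h$ attains its global minimum at $k=z_0$, where $h(z_0)=\int_0^{z_0}f(z)\dWW{q-\phi(\beta)}{z}{\beta}\,{\rm d}z-\W{q}{0}<0$ (the integral being $\le 0$ and $\W{q}{0}\ge 0$). At $k=0$ we have $h(0)=-\W{q}{0}\le 0$ as well. Hence $h<0$ on all of $(0,z_0]$, which already shows that any root must lie strictly beyond $z_0$, i.e. $k^*>\log(2)/\Phi(q)$. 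On $(z_0,\infty)$ the function $h$ is strictly increasing, so it has at most one root there; uniqueness on $(0,\infty)$ follows by combining this with $h<0$ on $(0,z_0]$.

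It remains to show existence, i.e. that $h(k)\to+\infty$ (or at least exceeds $0$) as $k\to\infty$. This is the only genuinely substantive point. I would argue that $\int_0^\infty f(z)\dWW{q-\phi(\beta)}{z}{\beta}\,{\rm d}z=+\infty$. Write the integral as $\int_0^\infty \dWW{q-\phi(\beta)}{z}{\beta}\,{\rm d}z-2\int_0^\infty {\rm e}^{-\Phi(q)z}\dWW{q-\phi(\beta)}{z}{\beta}\,{\rm d}z$. Using~\eqref{c5_scale5} to pass to $\W{q}{z}$ and recalling that $\W{q}{z}\sim C{\rm e}^{\Phi(q)z}$ as $z\to\infty$ with $\dW{q}{z}\sim C\Phi(q){\rm e}^{\Phi(q)z}$ (standard asymptotics of scale functions, via the renewal-theoretic representation cited in~\cite{KuzKypRiv}), one sees that the second integral converges while the first diverges: $\dWW{q-\phi(\beta)}{z}{\beta}={\rm e}^{-\beta z}(\dW{q}{z}-\beta\W{q}{z})\sim C(\Phi(q)-\beta){\rm e}^{(\Phi(q)-\beta)z}$, and since in all admissible cases $\Phi(q)>\beta$ (for $\beta<0$ trivially; for $\beta>0$ because $q>\phi(\beta)$ gives $\Phi(q)>\beta$), the exponent $\Phi(q)-\beta$ is positive, so $\int^\infty \dWW{q-\phi(\beta)}{z}{\beta}\,{\rm d}z=\infty$ while $\int^\infty {\rm e}^{-\Phi(q)z}\dWW{q-\phi(\beta)}{z}{\beta}\,{\rm d}z=\int^\infty {\rm e}^{-\beta z}\cdots\,{\rm d}z$ has integrand $\sim C(\Phi(q)-\beta){\rm e}^{-\beta z}$ — careful: if $\beta<0$ this also diverges, so instead I would keep the two terms together and note $f(z)\uparrow 1$, so for large $k$, $\int_{z_0}^k f(z)\dWW{q-\phi(\beta)}{z}{\beta}\,{\rm d}z\ge \tfrac12\int_{z_1}^k \dWW{q-\phi(\beta)}{z}{\beta}\,{\rm d}z$ for a suitable $z_1$, and the latter is $\tfrac12(\WW{q-\phi(\beta)}{k}{\beta}-\WW{q-\phi(\beta)}{z_1}{\beta})\to\infty$ since $\WW{q-\phi(\beta)}{\cdot}{\beta}$ is unbounded. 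Thus $h(k)\to+\infty$, and by the intermediate value theorem together with continuity of $h$ there is a (unique) root $k^*\in(z_0,\infty)$.

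The main obstacle is the last paragraph: controlling the tail $\int_0^\infty f(z)\dWW{q-\phi(\beta)}{z}{\beta}\,{\rm d}z$ cleanly across all admissible $(q,\beta)$ (both signs of $\beta$, and the degenerate case $q=0$). The cleanest route, which I would adopt, avoids delicate asymptotics altogether: use that $f(z)\ge \tfrac12$ for $z\ge z_1:=\log(4)/\Phi(q)$ and that $\WW{q-\phi(\beta)}{\cdot}{\beta}$ is a nonnegative, nondecreasing, unbounded function on $(0,\infty)$ (unboundedness because it is a scale function with Laplace transform $1/(\psi_\beta(\theta)-(q-\phi(\beta)))$ having a pole, equivalently $\WW{q-\phi(\beta)}{z}{\beta}\ge \WW{0}{z}{\beta}\to\infty$ since $\xi$ under $\P^\beta$ does not drift to $-\infty$ when $q>\phi(\beta)$, and a direct estimate when $q=0$). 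Then $\int_{z_1}^k f(z)\dWW{q-\phi(\beta)}{z}{\beta}\,{\rm d}z\ge\tfrac12(\WW{q-\phi(\beta)}{k}{\beta}-\WW{q-\phi(\beta)}{z_1}{\beta})\to\infty$, which dominates the finite negative contribution from $(0,z_1)$ and the constant $\W{q}{0}$, giving $h(k)\to\infty$.
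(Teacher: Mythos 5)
Your argument is correct and follows essentially the same route as the paper's: both rest on the sign change of $f$ at $k_0=\log(2)/\Phi(q)$, the strict positivity of $\dWW{q-\phi(\beta)}{z}{\beta}={\rm e}^{-\beta z}(\dW{q}{z}-\beta\W{q}{z})$ coming from $\Phi(q)>\beta$ in all admissible cases (so $h$ decreases to a negative minimum at $k_0$ and then increases), and divergence of $h$ at infinity from $f\geq\tfrac12$ eventually together with $\WW{q-\phi(\beta)}{k}{\beta}={\rm e}^{-\beta k}\W{q}{k}={\rm e}^{(\Phi(q)-\beta)k}W_{\Phi(q)}(k)\to\infty$. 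The only blemish is the parenthetical claim that $\WW{0}{z}{\beta}\to\infty$ because $\xi$ under $\P^{\beta}$ does not drift to $-\infty$ (in fact $W^{(0)}$ is unbounded precisely when the process fails to drift to $+\infty$), but this aside is not load-bearing since your other justifications of unboundedness are valid.
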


\begin{proof}
Using~\eqref{c5_scale5}, it follows that
\begin{equation*}
h^\prime(k)=f(k){\rm e}^{-\beta k}(\dW{q}{k}-\beta\W{q}{k})
\end{equation*}
for $k>0$. If $(q,\beta)\in\mathcal{A}$ such that $\beta>0$, then $\Phi(q)>\beta$ and, using~\eqref{c5_scale5},
\begin{equation*}
\frac{\dW{q}{z}}{\W{q}{z}}=\frac{W^\prime_{\Phi(q)}(z)}{W_{\Phi(q)}(z)}+\Phi(q)>\Phi(q)>\beta
\end{equation*}
for $z>0$. Therefore, we see that $h^\prime(k)<0$ on $(0,k_0)$, $h^\prime(k_0)=0$ and $h^\prime(k)>0$ on $(k_0,\infty)$, where $k_0=\log(2)/\Phi(q)$. The same is of course true if $(q,\beta)\in\mathcal{A}$ and $\beta<0$. Additionally, it holds that $\lim_{k\uparrow\infty}h(k)>0$. Indeed, let $z_0>k_0$ such that $f(z)\geq 1/2$ for $z\geq z_0$ and hence for $k>z_0$,
\begin{eqnarray*}
h(k)&=&h(k_0)+\int_{k_0}^kf(z)\dWW{q-\phi(\beta)}{k}{\beta}\,{\rm d}z-\W{q}{0}\\
&\geq& h(k_0)+\frac{1}{2}\int_{z_0}^k\dWW{q-\phi(\beta)}{z}{\beta}\,{\rm d}z-\W{q}{0}\\
&=&h(k_0)+\frac{1}{2}({\rm e}^{-\beta k}\W{q}{k}-{\rm e}^{-\beta z_0}\W{q}{z_0})-\W{q}{0},
\end{eqnarray*}
where in the last equality we have used~\eqref{c5_scale5}. Again by~\eqref{c5_scale5}, $\W{q}{k}={\rm e}^{\Phi(q)k}W_{\Phi(q)}(k)$ which together with the fact that $\Phi(q)>\beta$ implies that the right-hand side tends to infinity as $k\uparrow\infty$. Combining this with the fact that $
f(-1)= 0$ and the intermediate value theorem shows that there is a unique $k^*>k_0$ such that $h(k^*)=0$. This completes the proof.
\end{proof}

We are now in a position to formulate our main result of this section.
\begin{theorem}\label{c5_main_res}
The solution to~\eqref{c5_problem7} is given by
\begin{eqnarray}
V^*(y)&=&-\int_y^{k^*}f(z){\rm e}^{-\beta(z-y)}\W{q}{z-y}\,{\rm d}z,\qquad y\geq 0,\label{c5_defvalue}
\end{eqnarray}
with optimal stopping time $\tau_{k^*}$, where $k^*$ is as in Lemma~\ref{c5_root}.
\end{theorem}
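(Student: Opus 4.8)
The plan is to use the standard verification (martingale) approach for optimal stopping problems of this reflected-L\'evy-process type. Write
\begin{equation*}
G(y):=-\int_y^{k^*}f(z){\rm e}^{-\beta(z-y)}\W{q}{z-y}\,{\rm d}z
\end{equation*}
for the proposed value function and recall from Lemma~\ref{c5_v_k} (together with~\eqref{c5_scale5}) that $G=V_{k^*}$, i.e.\ $G(y)=\E_y^\beta[\int_0^{\tau_{k^*}}{\rm e}^{(-q+\phi(\beta))t}f(Y_t)\,{\rm d}t]$; this is already an upper bound for $V^*$ since $\tau_{k^*}\in\mathcal M$ (the only delicate case, $q=0$, $\beta<0$, being covered by Lemma~\ref{c5_adm}). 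So the substance is the matching lower bound: one must show $G(y)\le \E_y^\beta[\int_0^{\tau}{\rm e}^{(-q+\phi(\beta))t}f(Y_t)\,{\rm d}t]$ for every $\tau\in\mathcal M$.

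First I would collect the analytic properties of $G$. From Lemma~\ref{c5_root} one knows $k^*>\log2/\Phi(q)$ and that $k^*$ solves~\eqref{c5_cond_k}; smooth/continuous fit at $k^*$ was exactly how $k^*$ was chosen, so $G$ is $C^1$ at $k^*$ in the unbounded-variation case and continuous at $k^*$ in the bounded-variation case, matching the value $0$ of the (trivial) candidate on $[k^*,\infty)$ after we extend $G$ by $G(y):=0$ for $y\ge k^*$. On $(0,k^*)$ the regularity of scale functions in~\eqref{c5_smoothness} gives $G\in C^1$ (resp.\ $C^2$ when $\sigma>0$). Next I would check the two pointwise inequalities that make $G$ a supersolution: (a) $G(y)\le 0$ for all $y\ge0$ — equivalently the candidate dominates the ``stop now'' value $0$ — and (b) $(\mathcal L-q+\phi(\beta))G(y)+f(y)\ge 0$ on the continuation region $(0,k^*)$, where $\mathcal L$ is the generator of $Y$ under $\P^\beta$ (the spectrally negative L\'evy process $\Xi$ reflected at its running supremum; its generator acts on $G$ as that of $-\Xi$ plus a Neumann-type boundary term at $0$). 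On $(0,k^*)$ one has $(\mathcal L-q+\phi(\beta))G+f=0$ by the very definition of $G=V_{k^*}$ via the resolvent~\eqref{lebae}, so (b) reduces on $(k^*,\infty)$ to checking $f(y)\ge0$, which holds because $y\ge k^*>\log2/\Phi(q)$; one must also verify the boundary behaviour at $0$ (the $\delta_0$ term in~\eqref{lebae} encodes the reflection, and $\W{q-\phi(\beta)}_\beta(0)/\dWW{q-\phi(\beta)}{k^*}{\beta}(\cdots)=0$ by~\eqref{c5_cond_k}, so no extra obstruction arises). For (a) I would use that $h$ in Lemma~\ref{c5_root} is negative on $(0,k^*)$ together with an integration-by-parts / monotonicity argument showing $\int_y^{k^*}f(z)\dWW{q-\phi(\beta)}{z-y}{\beta}\,{\rm d}z$ stays below the boundary term, i.e.\ essentially that $y\mapsto V_k(y)$ is monotone in $k$ and $V_{k^*}\le V_\infty=0$ in the relevant range; this is where a short but genuine computation is needed.

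Then I would run the verification lemma. For arbitrary $\tau\in\mathcal M$, apply the Dynkin/It\^o formula (for L\'evy processes with reflection, as in Kyprianou's book) to ${\rm e}^{(-q+\phi(\beta))t}G(Y_t)$; the absolutely continuous part yields $\int_0^{t}{\rm e}^{(-q+\phi(\beta))s}(\mathcal L-q+\phi(\beta))G(Y_s)\,{\rm d}s\ge -\int_0^t{\rm e}^{(-q+\phi(\beta))s}f(Y_s)\,{\rm d}s$ by (b), the reflection term contributes non-negatively (or vanishes) because of the sign of $G'$ at $0$, and the jump/martingale part is a true martingale once we localize using the integrability built into $\mathcal M$ via~\eqref{c5_int_cond} and the boundedness of $f$. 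Taking expectations, letting the localization run to $\tau$, using $G(Y_0)=G(y)$, $G\le 0$, and optional stopping gives $G(y)\le \E_y^\beta[\int_0^{\tau}{\rm e}^{(-q+\phi(\beta))s}f(Y_s)\,{\rm d}s]$; combined with the upper bound $V^*(y)\le G(y)$ from $\tau_{k^*}$ this proves $V^*=G$ and that $\tau_{k^*}$ is optimal. The main obstacle I anticipate is the integrability/localization bookkeeping in the $q=0$, $\beta<0$ regime — there ${\rm e}^{\phi(\beta)t}$ grows, so one leans on Lemma~\ref{c5_adm} and Theorem~1 of~\cite{exitproblems} to control $\E_y^\beta[{\rm e}^{\phi(\beta)\tau_{k^*}}]$ and to justify that the stochastic integral is a uniformly integrable martingale up to $\tau\wedge\tau_{k^*}$ — together with verifying that $G$ is smooth enough at the single point $k^*$ for It\^o's formula to apply (handled by the smooth/continuous-fit choice of $k^*$, plus an approximation argument in the bounded-variation case where only continuity, not $C^1$, is available and one uses the Meyer--It\^o formula instead).
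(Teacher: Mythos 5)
Your proposal follows essentially the same route as the paper: a verification argument showing that the candidate $V_{k^*}$ is non-positive and that ${\rm e}^{-(q-\phi(\beta))t}V(Y_t)+\int_0^t{\rm e}^{-(q-\phi(\beta))u}f(Y_u)\,{\rm d}u$ is a $\P^\beta_y$-submartingale (generator equation on $[0,k^*)$ from Lemma~\ref{c5_v_k}, positivity of $f$ above $k^*>\log 2/\Phi(q)$, smooth/continuous fit at $k^*$, It\^o/Meyer--It\^o plus the integrability in $\mathcal{M}$ and Lemma~\ref{c5_adm} to close the loop). The one place your sketch is shaky is condition (a): the paper proves $V\leq 0$ by a strong Markov decomposition at $\tau_{k_0}$ with $k_0=\log 2/\Phi(q)$ (the integrand is $\leq 0$ up to $\tau_{k_0}$, and $V\leq 0$ on $[k_0,\infty)$ directly from the formula since $f\geq 0$ there), which is cleaner and more clearly correct than the monotonicity-in-$k$ argument you gesture at --- in particular ``$V_\infty=0$'' is not right (never stopping does not give value zero), though comparing $V_{k^*}$ with $V_{k_0}\leq 0$ instead would repair that step.
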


\begin{proof}
Let $V$ be defined as the right-hand side of~\eqref{c5_defvalue}. It is enough to check the following conditions:
{\renewcommand{\theenumi}{\roman{enumi}}
\renewcommand{\labelenumi}{(\theenumi)}
\begin{enumerate}
\item\label{c5_cond1} $V(y)\leq 0$ for all $y\geq 0$;
\item\label{c5_cond2} the process
\begin{equation*}
{\rm e}^{-(q-\phi(\beta))t}V(Y_t)+\int_0^t{\rm e}^{-(q-\phi(\beta))u}f(Y_u)\,{\rm d}u,\qquad t\geq 0,
\end{equation*}
is a $\P_y^\beta$-submartingale for all $y\geq 0$.
\end{enumerate}}
To see why these are sufficient conditions, note that~\eqref{c5_cond1} and~\eqref{c5_cond2} together with Fatou's lemma in the second inequality and Doob's stopping theorem in the third inequality show that for $\tau\in\mathcal{M}$,
\begin{eqnarray*}
\E_y^\beta[\int_0^\tau {\rm e}^{-(q-\phi(\beta))t}f(Y_u)\,{\rm d}u]&\geq&\E_y^\beta[V(Y_\tau)+\int_0^\tau {\rm e}^{-(q-\phi(\beta))t}f(Y_u)\,{\rm d}u]\\
&\geq&\limsup_{t\uparrow\infty}\E_y^\beta[V(Y_{t\wedge\tau})+\int_0^{t\wedge\tau} {\rm e}^{-(q-\phi(\beta))u}f(Y_u)\,{\rm d}u]\\*
&\geq&V(y).
\end{eqnarray*}
Since these inequalities are all equalities for $\tau=\tau_{k^*}$ the result follows.

The remainder of this proof is devoted to checking conditions~\eqref{c5_cond1} and~\eqref{c5_cond2}.\\

\noindent\textbf{Verification of condition~\eqref{c5_cond1}:} Recall that $k^*>k_0=\log(2)/\Phi(q)$ and that $f(z)\leq0$ on $(0,k_0]$ and $f(z)>0$ on $(k_0,\infty)$. It follows that $\tau_{k^*}\geq\tau_{k_0}$ and hence, using the strong Markov property, we see that
\begin{eqnarray*}
V(y)&=&\E_y^\beta[\int_0^{\tau_{k_0}} {\rm e}^{-(q-\phi(\beta))t}f(Y_t)\,{\rm d}t]+\E_y^\beta[\int_{\tau_{k_0}}^{\tau_{k^*}} {\rm e}^{-(q-\phi(\beta))t}f(Y_t)\,{\rm d}t]\\
&=&\E_y^\beta[\int_0^{\tau_{k_0}} {\rm e}^{-(q-\phi(\beta))t}f(Y_t)\,{\rm d}t]+\E_y^\beta[{\rm e}^{-(q-\phi(\beta))\tau_{k_0}}V(Y_{\tau_{k_0}})]\\
&\leq&0,
\end{eqnarray*}
where the last inequality follows from the fact that $f(z)\leq 0$ on $(0,k_0]$ and $V(y)\leq 0$ on $[k_0,\infty)$. This completes the proof of~\eqref{c5_cond1}.\\

\noindent\textbf{Verification of condition~\eqref{c5_cond2}:} The proof of this appeals to standard techniques and, hence, we only outline the main steps and omit the details.\\
\indent As for a first step, one may use the Markov property to show that the process
\begin{equation*}
Z_t:={\rm e}^{-(q-\phi(\beta))(t\wedge\tau_{k^*})}V(Y_{t\wedge\tau_{k^*}})+\int_0^{t\wedge\tau_{k^*}} {\rm e}^{-(q-\phi(\beta))u}f(Y_u)\,{\rm d}u,\qquad t\geq 0,
\end{equation*}
is a $\P_y^\beta$-martingale for $0<y<k^*$. Indeed, for $t\geq 0$, the strong Markov property gives
\begin{eqnarray*}
\E_y^\beta[Z_{\tau_{k^*}}\vert\mathcal{F}_t]&=&Z_{\tau_{k^*}}1_{\{\tau_{k^*}<t\}}+\E_y^\beta[Z_{\tau_{k^*}}\vert\mathcal{F}_t]1_{\{t\leq\tau_{k^*}\}}\\
&=&Z_{\tau_{k^*}}1_{\{\tau_{k^*}<t\}}+\int_0^t{\rm e}^{-(q-\phi(\beta))u}f(Y_u)\,{\rm d}u1_{\{t\leq\tau_{k^*}\}}\\
&&+{\rm e}^{-(q-\phi(\beta))t}V(Y_t)1_{\{t\leq\tau_{k^*}\}}\\
&=&Z_{t\wedge\tau_{k^*}}
\end{eqnarray*}
from which the desired martingale property follows.\\
\indent As for a second step, use Doob's optional stopping theorem to deduce that for $0<k<k^*$ the process ${\rm e}^{-(q-\phi(\beta)t)(t\wedge\tau_k)}V(Y_{t\wedge\tau_k})$, $t\geq 0$, is a $\P_y^\beta$-martingale for \mbox{$0\leq y<k$}. Using this in conjunction with the appropriate version of It\^ o's formula (cf.~Theorem~71, Chapter~IV of~\cite{protter}) implies that
\begin{equation}
(\hat\Gamma^\beta V)(y)-(q-\phi(\beta))V(y)=-f(y),\qquad y\in[0,k^*),\label{c5_gen_eq}
\end{equation}
where $\hat\Gamma^\beta$ is the generator of $-\Xi$ under $\tilde\P^\beta$.\\
\indent Finally, applying the appropriate version of It\^o's formula one more time to the process ${\rm e}^{-(q-\phi(\beta)t)}V(Y_t)$, $t\geq 0$, and using~\eqref{c5_gen_eq} shows that
\begin{equation*}
{\rm e}^{-(q-\phi(\beta))t}V(Y_t)+\int_0^t{\rm e}^{-(q-\phi(\beta))u}f(Y_u)\,{\rm d}u,\qquad t\geq 0,
\end{equation*}
is a $\P_y^\beta$-submartingale for all $y\geq 0$. This finishes the sketch of the proof of~\eqref{c5_cond2}.
\end{proof}

\section{Proofs of main results}\label{c5_proofs}

\begin{proof}[Proof of Theorem~\ref{c5_main_res_1}]
The result follows by Lemma~\ref{c5_reduction_1} (and what was said just after it) and Theorem~\ref{c5_main_res}. Specifically, using Theorem~\ref{c5_main_res} with $\Xi$ equal to $\xi$ unkilled, $\beta=\alpha$ (noting in particular that $\phi(\alpha)<q$ by assumption), $y=\log(s/x)$ and then setting $K^*:={\rm e}^{-k^*}$ gives
\begin{eqnarray*}
v(x,s)&=&-x^\alpha\int_{\log(s/x)}^{-\log(K^*)}(1-2{\rm e}^{-\Phi(q)u})\WW{q-\phi(\alpha)}{u-\log(s/x)}{\alpha}\,{\rm d}u\\
&=&-x^\alpha\int^x_{K^*s}z^{-1}(1-2{\rm e}^{-\Phi(q)\log(s/z)})\WW{q-\phi(\alpha)}{\log(x/z)}{\alpha}\,{\rm d}z,
\end{eqnarray*}
where in the second equality we changed variables according to $u=\log(s/z)$. The expression for $v(x,s)$ in the theorem now follows after an application of~\eqref{c5_scale5}. As for the optimal constant $K^*$, we see that $K^*$ satisfies the equation
\begin{eqnarray*}
\int_0^{\log(1/K)}(1-2{\rm e}^{-\Phi(q)z})\dWW{q-\phi(\alpha)}{z}{\alpha}\,{\rm d}z=\W{q}{0}\quad\text{on $(0,1)$}
\end{eqnarray*} 
and the proof is complete.
\end{proof}

\begin{proof}[Proof of Theorem \ref{meantheta}] Write $\Theta^{(x)}$ in place of $\Theta$ to emphasise the dependency on the initial position $X_0 = x>0$. Self-similarity, and in particular the Lamperti transform,  implies that
\begin{equation}
x^{-\alpha}\Theta^{(x)} = \int_0^{{G}}{\rm e}^{\alpha \xi_t}\,{\rm d}t,
\label{that}
\end{equation}
where ${G} = \sup\{t>0 : \xi_t = \overline{\xi}_{\mathbf{e}}\}$. It follows that $E_x(\Theta)<\infty$ for all $x>0$ if and only if $E_1(\Theta)<\infty$. Following standard excursion theory, cf. Chapter 6 of \cite{kyprianou} or Chapter VI  of \cite{bertoin_book}, making particular use of the fact that the ladder height process of a (killed) spectrally negative L\'evy process is a (killed) unit drift, we have 
\begin{equation}
E_1(\Theta) = E_1\left[\sum_{t<\chi} {\rm e}^{\alpha t}\int_0^{\varsigma_t}{\rm e}^{-\alpha\epsilon_t(s)}{\rm d}s\right] + E_1\left[\int_0^{\mathbf{e}} {\rm e}^{\alpha t} \mathbf{1}_{(\overline{\xi}_t - \xi_t = 0)}{\rm d}t\right],
\label{2exp}
\end{equation}
such that  the sum is taken over a Poisson point process of excursions of $\overline\xi -\xi$ from zero,  $\{(t, \epsilon_t): t\in \mathcal{I}\}$,  where $\mathcal{I}$ is the index set of the point process,  $\epsilon_t :  = \{\epsilon_t(s) : s\leq \varsigma_t\}$ and $\varsigma_t$ is the excursion length of $\epsilon_t$; moreover, $\chi : = \inf\{t> 0 : \varsigma_t = \infty\}$ in the case that $\mathbf{e} = \infty$ and, otherwise, $\chi : = \inf\{t>0 : \varsigma_t >\mathbf{e}^{(t)}\}$, where, for each excursion indexed by $t>0$, $\mathbf{e}^{(t)}$ is an independent copy of the exponential random variable $\mathbf{e}$. Write ${n}$ for the intensity measure of this Poisson point process of excursions. For the case of a spectrally negative L\'evy process, it is well known that $\chi$ is exponentially distributed with parameter $\Phi(q)$. Note that $\Phi(q)$ is strictly positive if $\xi$ drifts to $-\infty$ or $\psi(0)<0$, i.e. the process $\xi$ is killed. If we write $\texttt{a} = \lim_{p\to\infty}\Phi(p)/p$, then it is also known (cf. the computations in Section 6.3 of \cite{kyprianou}) that the second expectation on the right-hand side of (\ref{2exp}) is equal to $\texttt{a}E_1[\int_0^{\mathbf{e}}{\rm e}^{\alpha t} {\rm d}\ell_t]$ where $\{\ell_t: t\geq 0\}$ is the local time of $\overline\xi-\xi$ at 0. The compensation formula and the observation that $\chi = \ell_\mathbf{e}$ now tell us that
\begin{eqnarray}
E_1(\Theta)  &=& E_1 \left[\int_0^\chi {\rm e}^{\alpha t}{\rm d}t\right]{n}\left(\int_0^\varsigma e^{-\alpha\epsilon(s)}{\rm d}z; \, \varsigma<\mathbf{e}\right) + \texttt{a} E_1 \left[\int_0^\chi {\rm e}^{\alpha t}{\rm d}t\right]\notag\\
&=&\frac{1}{\alpha}E_1[{\rm e}^{\alpha \chi} -1]\left({n}\left(\int_0^\varsigma e^{-\alpha\epsilon(s)}{\rm d}z; \, \varsigma<\mathbf{e}\right) + \texttt{a}\right)
\label{finite?}
\end{eqnarray}
such that both sides are finite (resp. infinite) at the same time. We immediately see that $E_1[\exp(\alpha\chi)]<\infty$ if and only if $\Phi(q)>\alpha$, that is to say, if and only if $\psi(\alpha)<0$. Appealing to a method developed in \cite{doney} together with Theorem VI.20 and Lemma VII.7 in \cite{bertoin_book}, it can be checked that 
\begin{eqnarray*}
{n}\left(\int_0^\varsigma e^{-\alpha\epsilon(s)}{\rm d}z; \, \varsigma<\mathbf{e}\right)
&=&\lim_{\varepsilon\downarrow0} \frac{1}{\varepsilon}\mathbb{E}^{\Phi(q)}_{-\varepsilon}\left(\int_0^{\tau^+_0} {\rm e}^{\alpha \xi_s}{\rm d}s \right)\\
&=&\lim_{\varepsilon\downarrow0} {\rm e}^{\alpha \varepsilon}\frac{1}{\varepsilon}\int_{[0,\infty)} {\rm e}^{-\alpha y}{\rm d}\hat{V}_{\Phi(q)}(y)\int_{[0,\varepsilon]}{\rm e}^{\alpha z}{\rm d}V_{\Phi(q)}(z),
\end{eqnarray*}
where $\mathbb{P}^{\Phi(q)}$ is the law of $\xi$ under $P_1$ after the change of measure given in (\ref{c5_changeofmeasure}) with $v = \Phi(q)$ and  ${V}_{\Phi(q)}$ and $\hat{V}_{\Phi(q)}$ are the renewal measures of the ascending and descending ladder height processes of $(\xi, \mathbb{P}^{\Phi(q)})$ respectively. 
From, e.g. \cite{bertoin-doney}, it is known that ${\rm d}V_{\Phi(q)}(z) = {\rm d}z$ on $z\geq 0$ and 
\[
\int_{[0,\infty)} {\rm e}^{-\beta y}{\rm d}\hat{V}_{\Phi(q)}(y) = \frac{\beta}{\psi(\beta+ \Phi(q))}, \qquad \beta\geq 0.
\]
Putting the pieces together, we have 
\begin{equation}
{n}\left(\int_0^\varsigma e^{-\alpha\epsilon(s)}{\rm d}z; \, \varsigma<\mathbf{e}\right) = \frac{\alpha}{\psi(\Phi(q)+\alpha)},
\label{excursion_comp}
\end{equation}
which is finite and hence, considering this in the context of   (\ref{finite?}), the result is proved.
\end{proof}

\begin{proof}[Proof of Theorem~\ref{c5_main_res_2}]
The result follows by Lemma~\ref{c5_reduction_2} (and what was said just after it) and Theorem~\ref{c5_main_res} with $\Xi$. Specifically, using Theorem~\ref{c5_main_res} with $\Xi$ equal to $\hat\xi$ unkilled, $\beta=-\alpha$ (noting in particular that $\psi(-\alpha)$ exists by assumption with $\phi(-\alpha)<q$ if $q>0$), $y=\log(x/i)$ and then setting $\hat K^*:={\rm e}^{k^*}$ gives
\begin{eqnarray*}
\hat v(x,i)&=&-x^\alpha\int_{\log(x/i)}^{\log(\hat K^*)}(1-2{\rm e}^{-\hat\Phi(q)u})\hat W^{(q-\hat\phi(-\alpha))}_{-\alpha}(u-\log(x/i))\,{\rm d}u\\
&=&-x^\alpha\int^{\hat K^*i}_{x}z^{-1}(1-2{\rm e}^{-\hat\Phi(q)\log(z/i)})W_{-\alpha}^{(q-\hat\phi(-\alpha))}(\log(z/x))\,{\rm d}z,
\end{eqnarray*}
where in the second equality we changed variables according to $u=\log(z/i)$. The expression for $\hat v(x,i)$ in the theorem now follows after an application of~\eqref{c5_scale5}. As for the optimal constant $\hat K^*$, we see that $\hat K^*$ satisfies the equation
\begin{eqnarray*}
\int_0^{\log(K)}(1-2{\rm e}^{-\hat\Phi(q)z})\hat W^{(q-\hat\phi(-\alpha))^\prime}_{-\alpha}(z)\,{\rm d}z=\hat W^{(q)}(0)\qquad\text{on $(1,\infty)$},
\end{eqnarray*} 
thus completing the proof.
\end{proof}

\begin{proof}[Proof of Theorem \ref{thetahatfinite}]
 Note that, similarly to the proof of Theorem \ref{meantheta},  it suffices to consider the case that $x=1$. Similarly to (\ref{that}) we have, under $P_1$, 
 that 
 \[
\hat\Theta = \int_0^{\hat{G}}{\rm e}^{\alpha\xi_t}{\rm d}t,
\]
where $\hat{G} = \sup\{t>0 : \xi_t = \underline{\xi}_\mathbf{e} \}$. A similar analysis via excursion theory shows that, as long as the right hand side is finite, 
\[
E_1(\hat\Theta)  = \frac{1}{\alpha}[1-{\rm e}^{-\alpha\hat\chi}]\left(\hat{n}\left(\int_0^\varsigma e^{\alpha\epsilon(s)}{\rm d}z; \, \varsigma<\mathbf{e}\right) + \hat{\texttt{a}}\right),
\]
where $\hat{\chi}, \hat{n}$ and $\hat{\texttt{a}}$ play the same role as $\chi, n$ and $\texttt{a}$, but now for the process $\hat\xi$. Following the reasoning that leads to (\ref{excursion_comp}) we see that, whenever the right-hand side is finite,
\begin{equation}
\hat{n}\left(\int_0^\varsigma e^{\alpha\epsilon(s)}{\rm d}z; \, \varsigma<\mathbf{e}\right) = \frac{-\alpha}{\hat\psi(\hat\Phi(q)-\alpha)}.
\label{lhs}
\end{equation}
Note that, since $\hat\psi(\hat\Phi(q)) = 0$, when the right-hand side is finite, then it is positive valued. Moreover, the left-hand side of (\ref{lhs}) is a monotone function of $\alpha$ and hence we see that, when $q=0$, $E_1(\hat\Theta)<\infty$ if and only if $\alpha<\hat\Phi(0)$, i.e. $\hat\psi(\alpha)<0$.  Moreover, if $q>0$, then requiring that $\alpha<\hat\Phi(q)$, i.e. $\hat\psi(\alpha)<0$, is a sufficient condition for $E_1(\hat\Theta)<\infty$ but a necessary condition would require us to know how the exponent $\hat\psi$ behaves if its domain is extended into the negative half line. 
\end{proof}

\section{Examples}\label{c5_examples}
In this section we present two examples, one of which shows that our results are consistent with the existing literature.

\begin{cor}\label{c5_example3}
Let $X$ be a pssMp with index of self-similarity $\alpha>0$ such that its Lamperti representation is given by $\xi_t=\sigma W_t-\mu t$, $t\geq 0$, where $\sigma>0$, $\mu>0$ and $W_t$, $t\geq 0$, is a standard Brownian motion. In other words, $X$ is of type~\eqref{c5_type2} such that $\lim_{t\uparrow\infty}X_t=-\infty$. Moreover, suppose that $\alpha<2\mu/\sigma^2$ (this ensures that $X\in\mathcal{C}^1$). Then we have
\begin{eqnarray*}
&&v(x,s)=\frac{1}{\mu}\Bigg[x^\alpha\bigg(1-\bigg(\frac{K^*s}{x}\bigg)^\alpha\bigg)\bigg(\frac{1}{\alpha}+\frac{2}{\alpha}\bigg(\frac{x}{s}\bigg)^{\Phi(0)}\bigg)\\
&&-\frac{x^{\alpha}}{\alpha-\Phi(0)}\bigg(1-\bigg(\frac{K^*s}{x}\bigg)^{\alpha-\Phi(0)}\bigg)+\frac{2s^\alpha (K^*)^{\alpha+\Phi(0)}}{\alpha+\Phi(0)}\bigg(1-\bigg(\frac{K^*s}{x}\bigg)^{-\Phi(0)-\alpha}\bigg)\Bigg],
\end{eqnarray*}
where $\Phi(0)=2\mu/\sigma^2$, and $K^*$ is the unique solution to
\begin{equation*}
K^{\alpha-\Phi(0)}+\frac{2\Phi(0)-3\alpha}{\alpha}K^\alpha+\frac{2\alpha}{\alpha+\Phi(0)}K^{\alpha+\Phi(0)}-\frac{2\Phi(0)^2}{\alpha(\alpha+\Phi(0))}=0
\end{equation*}
on $(0,1)$. In particular, $K^*\in(0,2^{-1/\Phi(0)})$.
\end{cor}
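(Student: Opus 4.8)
The plan is to specialise Theorem~\ref{c5_main_res_1} to the Lamperti representation at hand, for which every ingredient can be computed in closed form. First I would record the basic data. Since $X$ is the pssMp whose Lamperti representation $\xi_t=\sigma W_t-\mu t$ drifts to $-\infty$ (because $\mu>0$), $X$ is spectrally negative with non-monotone paths and of type~\eqref{c5_type2}, so the killing rate is $q=0$ and the unkilled Laplace exponent coincides with that of $\xi$, namely $\phi(\theta)=\psi(\theta)=\tfrac12\sigma^2\theta^2-\mu\theta$. Hence $\psi(\alpha)=\alpha(\tfrac12\sigma^2\alpha-\mu)<0$ exactly when $\alpha<2\mu/\sigma^2$, confirming that the hypothesis places $X$ in $\mathcal{C}^1$, and the right-inverse of $\phi$ satisfies $\Phi(0)=2\mu/\sigma^2$ as claimed. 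Inverting the defining Laplace transform $\int_0^\infty e^{-\theta z}W^{(0)}(z)\,{\rm d}z=1/\phi(\theta)=\tfrac{2}{\sigma^2}\cdot\tfrac{1}{\theta(\theta-\Phi(0))}$ by partial fractions gives $W^{(0)}(z)=\tfrac{1}{\mu}\bigl(e^{\Phi(0)z}-1\bigr)$ for $z\geq0$ (using $2/\sigma^2=\Phi(0)/\mu$), and in particular $W^{(q)}(0)=W^{(0)}(0)=0$, consistent with $\xi$ being of unbounded variation.

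\textbf{The value function.} Substituting this $W^{(q)}$ into the formula of Theorem~\ref{c5_main_res_1} yields
\[
v(x,s)=-\frac{1}{\mu}\int_{K^*s}^{x}z^{\alpha-1}\Bigl(1-2(z/s)^{\Phi(0)}\Bigr)\Bigl((x/z)^{\Phi(0)}-1\Bigr)\,{\rm d}z .
\]
Expanding the integrand, the factor $z^{\alpha-1}$ multiplies the four monomials $x^{\Phi(0)}z^{-\Phi(0)}$, $-1$, $-2(x/s)^{\Phi(0)}$ and $2s^{-\Phi(0)}z^{\Phi(0)}$. Because the hypothesis forces $\alpha<\Phi(0)$, none of the exponents $\alpha-\Phi(0)$, $\alpha$, $\alpha+\Phi(0)$ vanishes, so each term integrates to a pure power of $z$ (no logarithms); evaluating between $K^*s$ and $x$ and regrouping — in particular using $s^{-\Phi(0)}(K^*s)^{\alpha+\Phi(0)}=s^\alpha(K^*)^{\alpha+\Phi(0)}$ to put the last contribution into the bracket $\bigl(1-(K^*s/x)^{-\Phi(0)-\alpha}\bigr)$ — gives precisely the three-term expression displayed in the statement. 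This step is purely computational.

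\textbf{The constant $K^*$.} By Theorem~\ref{c5_main_res_1}, $K^*$ is the unique solution on $(0,1)$ of~\eqref{c5_for_k_1}, whose right-hand side here equals $W^{(q)}(0)=0$. To make the left-hand side explicit I would apply the scale-function identity~\eqref{c5_scale5} with $v=\alpha$: since $\phi(\alpha)=\psi(\alpha)$ it gives $W^{(q-\phi(\alpha))}_\alpha(z)=W^{(-\psi(\alpha))}_\alpha(z)=e^{-\alpha z}W^{(0)}(z)=\tfrac1\mu\bigl(e^{(\Phi(0)-\alpha)z}-e^{-\alpha z}\bigr)$, and hence $W^{(q-\phi(\alpha))\prime}_\alpha(z)=\tfrac1\mu\bigl((\Phi(0)-\alpha)e^{(\Phi(0)-\alpha)z}+\alpha e^{-\alpha z}\bigr)$. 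Dropping the harmless factor $1/\mu$, multiplying by $1-2e^{-\Phi(0)z}$, integrating from $0$ to $\log(1/K)$ (again only power functions arise, as $\alpha\neq\Phi(0)$), then substituting $e^{-\log(1/K)}=K$ and collecting the constant terms over the common denominator $\alpha(\alpha+\Phi(0))$, reduces~\eqref{c5_for_k_1} to the equation for $K$ stated in the corollary. The assertions that this equation has a unique root on $(0,1)$ and that $K^*\in(0,2^{-1/\Phi(0)})$ are not reproved here: they are exactly the content of Theorem~\ref{c5_main_res_1} (equivalently Lemma~\ref{c5_root}) specialised to this example.

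There is no genuine obstacle in this proof; the only points requiring care are the systematic use of the standing assumption $\alpha<\Phi(0)$ to guarantee that all the elementary integrals are power functions rather than logarithms, and the bookkeeping of signs when regrouping the terms of $v(x,s)$ and when reducing the equation for $K^*$.
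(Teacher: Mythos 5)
Your proposal is correct and follows essentially the same route as the paper: identify $\psi$, $\Phi(0)=2\mu/\sigma^2$ and $W^{(0)}(z)=\mu^{-1}(e^{\Phi(0)z}-1)$, substitute into Theorem~\ref{c5_main_res_1}, integrate the four power terms using $\alpha<\Phi(0)$, and reduce~\eqref{c5_for_k_1} via~\eqref{c5_scale5} to the stated polynomial equation. The only cosmetic difference is that the paper performs the final integration after the change of variables $u=e^{z}$, whereas you integrate directly in $z$; the computation is the same.
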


\begin{proof}
It is easy to check that $\psi(\theta)=\frac{\sigma^2}{2}\theta^2-\mu\theta$, $\Phi(0)=\frac{2\mu}{\sigma^2}$ and $W^{(0)}(x)=\frac{{\rm e}^{x\Phi(0)}-1}{\mu}$. Also note that $\alpha<\Phi(0)$ by assumption. For convenience, write $k=K^*$. It now follows from Theorem~\ref{c5_main_res_1} that
\begin{eqnarray*}
v(x,s)&=&-\int_{ks}^{x}\big(1-2(z/s)^{\Phi(0)}\big)z^{\alpha-1}\frac{(x/z)^{\Phi(0)}-1}{\mu}\,{\rm d}z\\
&=&\frac{1}{\mu}\Bigg[-x^{\Phi(0)}\int_{ks}^{x}z^{\alpha-1-\Phi(0)}\,{\rm d}z+\int_{ks}^{x}z^{\alpha-1}\,{\rm d}z\\
&&+2(x/s)^{\Phi(0)}\int_{ks}^{x}z^{\alpha-1}\,{\rm d}z-2s^{-\Phi(0)}\int_{ks}^{x}z^{\alpha-1+\Phi(0)}\,{\rm d}z\Bigg]\\
&=&\frac{1}{\mu}\Bigg[-x^{\Phi(0)}\Bigg(\frac{x^{\alpha-\Phi(0)}}{\alpha-\Phi(0)}-\frac{(ks)^{\alpha-\Phi(0)}}{\alpha-\Phi(0)}\Bigg)+\frac{x^\alpha}{\alpha}-\frac{(ks)^\alpha}{\alpha}\\
&&+2(x/s)^{\Phi(0)}\Bigg(\frac{x^\alpha}{\alpha}-\frac{(ks)^\alpha}{\alpha}\Bigg)-2s^{-\Phi(0)}\bigg(\frac{x^{\alpha+\Phi(0)}}{\alpha+\Phi(0)}-\frac{(ks)^{\alpha+\Phi(0)}}{\alpha+\Phi(0)}\bigg)\Bigg]\\
&=&\frac{1}{\mu}\Bigg[\frac{x^{\alpha}}{\alpha-\Phi(0)}\bigg(\bigg(\frac{ks}{x}\bigg)^{\alpha-\Phi(0)}-1\bigg)-\frac{x^\alpha}{\alpha}\bigg(\bigg(\frac{ks}{x}\bigg)^\alpha-1\bigg)\\
&&-\frac{2s^{-\Phi(0)}x^{\alpha+\Phi(0)}}{\alpha}\bigg(\bigg(\frac{ks}{x}\bigg)^\alpha-1\bigg)-\frac{2s^\alpha k^{\alpha+\Phi(0)}}{\alpha+\Phi(0)}\bigg(\bigg(\frac{ks}{x}\bigg)^{-\Phi(0)-\alpha}-1\bigg)\Bigg].
\end{eqnarray*}
Adding the second and third term gives
\begin{eqnarray*}
v(x,s)&=&\frac{1}{\mu}\Bigg[x^\alpha\bigg(1-\bigg(\frac{ks}{x}\bigg)^\alpha\bigg)\bigg(\frac{1}{\alpha}+\frac{2}{\alpha}\bigg(\frac{x}{s}\bigg)^{\Phi(0)}\bigg)\\
&&-\frac{x^{\alpha}}{\alpha-\Phi(0)}\bigg(1-\bigg(\frac{ks}{x}\bigg)^{\alpha-\Phi(0)}\bigg)+\frac{2s^\alpha k^{\alpha+\Phi(0)}}{\alpha+\Phi(0)}\bigg(1-\bigg(\frac{ks}{x}\bigg)^{-\Phi(0)-\alpha}\bigg)\Bigg].
\end{eqnarray*}Next, let us derive the equation for $K^*$. Using~\eqref{c5_scale5} and changing variables according to $u={\rm e}^z$ shows that $K^*$ is the unique root of
\begin{equation}
\int_1^{1/K}u^{-\alpha-1}(1-2u^{-\Phi(0)})(\Phi(0)u^{\Phi(0)}-\alpha u^{\Phi(0)}+\alpha)\,{\rm d}u=0\qquad\text{on $(0,1)$}.\label{c5_equation_for_k}
\end{equation}
Solving the integral and rearranging gives the claim.
\end{proof}

\begin{cor}\label{c5_example4}
Let $X$ be a pssMp with index of self-similarity $\alpha>0$ such that its Lamperti representation is given by $\xi_t=\sigma W_t+\mu t$, $t\geq 0$, where $\sigma>0$, $\mu>0$ and $W_t$, $t\geq 0$, is a standard Brownian motion. In other words, $X$ is of type~\eqref{c5_type1} such that $\lim_{t\uparrow\infty}X_t=\infty$.
\begin{enumerate}
\item If $\alpha\neq2\mu/\sigma^2$, we have
\begin{eqnarray*}
&&\hat v(x,i)=\frac{1}{\mu}\Bigg[x^\alpha\bigg(\bigg(\frac{\hat K^*i}{x}\bigg)^\alpha-1\bigg)\bigg(\frac{1}{\alpha}+\frac{2}{\alpha}\bigg(\frac{i}{x}\bigg)^{\hat\Phi(0)}\bigg)\label{c5_v_1}\\
&&-\frac{x^{\alpha}}{\alpha+\hat\Phi(0)}\bigg(\bigg(\frac{\hat K^*i}{x}\bigg)^{\alpha+\hat\Phi(0)}-1\bigg)-\frac{2i^\alpha (\hat K^*)^{\alpha-\hat\Phi(0)}}{\hat\Phi(0)-\alpha}\bigg(\bigg(\frac{\hat K^*i}{x}\bigg)^{\hat\Phi(0)-\alpha}-1\bigg)\Bigg],\notag
\end{eqnarray*}
where $\hat\Phi(0)=2\mu/\sigma^2$, and $\hat K^*$ is the unique solution to
\begin{equation*}
K^{\hat\Phi(0)+\alpha}-\frac{3\alpha+2\hat\Phi(0)}{\alpha}K^\alpha+\frac{2\alpha}{\alpha-\hat\Phi(0)}K^{\alpha-\hat\Phi(0)}-\frac{2\hat\Phi(0)^2}{\alpha(\alpha-\hat\Phi(0))}=0
\end{equation*}
on $(1,\infty)$. In particular, $\hat K^*>2^{1/\hat\Phi(0)}$.
\item If $\alpha=2\mu/\sigma^2$, we have
\begin{eqnarray*}
\hat v(x,i)&=&\frac{1}{\mu}\Bigg[x^\alpha\bigg(\frac{1}{\alpha}+\frac{2}{\alpha}\bigg(\frac{i}{x}\bigg)^\alpha\bigg)\bigg(\bigg(\frac{\hat K^*i}{x}\bigg)^\alpha-1\bigg)\label{c5_v_2}\\*
&&-\frac{x^2}{2\alpha}\bigg(\bigg(\frac{\hat K^*i}{x}\bigg)^{2\alpha}-1\bigg)-2i^\alpha\log(\hat K^*i/x)\Bigg],\notag
\end{eqnarray*}
and $\hat K^*$ is the unique solution to
\begin{equation*}
K^{2\alpha}-5K^\alpha+2\alpha\log(K)+4=0
\end{equation*}
on $(1,\infty)$. In particular, $\hat K^*>2^{1/\hat\Phi(0)}$.
\end{enumerate}
\end{cor}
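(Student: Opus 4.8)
The proof runs exactly parallel to that of Corollary~\ref{c5_example3}: the plan is a direct specialisation of Theorem~\ref{c5_main_res_2}. Since the Lamperti representation $\xi_t=\sigma W_t+\mu t$ drifts to $+\infty$, Proposition~\ref{c5_Lamp} places $X$ in type~\eqref{c5_type1}, so that $q=0$, $\zeta=\infty$ and $\lim_{t\uparrow\infty}X_t=\infty$; and since the dual exponent $\hat\psi$ is a polynomial it certainly exists at $-\alpha$, so $X\in\hat{\mathcal{C}}^{1}$ with no extra restriction on $\alpha$. The dual $\hat\xi=-\xi$ is a Brownian motion with drift $-\mu$, so the first step is to record the elementary facts $\hat\phi(\theta)=\hat\psi(\theta)=\tfrac{\sigma^{2}}{2}\theta^{2}-\mu\theta$, $\hat\Phi(0)=2\mu/\sigma^{2}>0$ and $\hat W^{(0)}(x)=(\mathrm{e}^{x\hat\Phi(0)}-1)/\mu$, the last being verified by inverting the Laplace transform $1/\hat\phi(\theta)$. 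In particular $\hat W^{(0)}(0)=0$, so the right-hand side of~\eqref{c5_for_k_2} vanishes.

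For the value function I would substitute the above expression for $\hat W^{(q)}=\hat W^{(0)}$ into the formula of Theorem~\ref{c5_main_res_2}; writing $P:=\hat\Phi(0)$ for brevity, the integrand $z^{\alpha-1}\big(1-2(i/z)^{P}\big)\big((z/x)^{P}-1\big)$ expands into the four monomials $x^{-P}z^{\alpha-1+P}$, $-z^{\alpha-1}$, $-2(i/x)^{P}z^{\alpha-1}$ and $2i^{P}z^{\alpha-1-P}$. Integrating each over $[x,\hat K^{*}i]$ and regrouping the resulting terms gives the stated expression; combining the two $z^{\alpha-1}$ contributions is what produces the factor $\tfrac1\alpha+\tfrac2\alpha(i/x)^{P}$. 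The only place that forces a case distinction is the last monomial: $\int z^{\alpha-1-P}\,\mathrm{d}z=z^{\alpha-P}/(\alpha-P)$ when $\alpha\neq P=2\mu/\sigma^{2}$, which yields the first case, while $\int z^{-1}\,\mathrm{d}z=\log z$ when $\alpha=P$, which is precisely the source of the $\log(\hat K^{*}i/x)$ term in the second case.

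To pin down $\hat K^{*}$ I would use~\eqref{c5_scale5} with $v=-\alpha$ and $\eta=0$ to obtain $\hat W^{(-\hat\phi(-\alpha))}_{-\alpha}(z)=\mathrm{e}^{\alpha z}\hat W^{(0)}(z)=(\mathrm{e}^{(\alpha+P)z}-\mathrm{e}^{\alpha z})/\mu$, hence $\hat W^{(-\hat\phi(-\alpha))\prime}_{-\alpha}(z)=\big((\alpha+P)\mathrm{e}^{(\alpha+P)z}-\alpha\mathrm{e}^{\alpha z}\big)/\mu$. Plugging this and $\hat W^{(0)}(0)=0$ into~\eqref{c5_for_k_2}, clearing the factor $1/\mu$ and substituting $u=\mathrm{e}^{z}$ turns the condition into
\begin{equation*}
\int_{1}^{K}\big(1-2u^{-P}\big)\big((\alpha+P)u^{\alpha+P-1}-\alpha u^{\alpha-1}\big)\,\mathrm{d}u=0 .
\end{equation*}
Expanding, integrating termwise (again splitting the $u^{\alpha-P-1}$ term according to whether $\alpha=P$) and collecting the coefficients of $K^{\alpha+P}$, $K^{\alpha}$, $K^{\alpha-P}$ and the constant produces the two displayed equations. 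Finally, existence, uniqueness on $(1,\infty)$ and the bound $\hat K^{*}>2^{1/\hat\Phi(0)}$ are inherited directly from Lemma~\ref{c5_root}, since $\hat K^{*}=\mathrm{e}^{k^{*}}$ where $k^{*}$ is the unique root on $(0,\infty)$ of~\eqref{c5_k_eq} and $k^{*}>\log 2/\hat\Phi(0)$.

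The argument presents no genuine obstacle; it is pure bookkeeping. The one thing demanding care is the dichotomy $\alpha\gtrless 2\mu/\sigma^{2}$, which has to be tracked consistently through both the value function and the equation for $\hat K^{*}$, the boundary case $\alpha=\hat\Phi(0)$ being exactly where the relevant power integral degenerates into a logarithm.
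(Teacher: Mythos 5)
Your proposal is correct and follows essentially the same route as the paper's proof: specialise Theorem~\ref{c5_main_res_2} with $\hat W^{(0)}(x)=({\rm e}^{x\hat\Phi(0)}-1)/\mu$, expand the integrand into four power terms and integrate (with the $\alpha=\hat\Phi(0)$ case producing the logarithm), and obtain the equation for $\hat K^*$ via~\eqref{c5_scale5} and the substitution $u={\rm e}^z$, which yields exactly the integral condition the paper records. The only difference is that you spell out the boundary case $\alpha=\hat\Phi(0)$ and the membership check $X\in\hat{\mathcal{C}}^1$, both of which the paper leaves implicit.
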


\begin{proof}
Clearly, $-\xi_t=\sigma W_t-\mu t$ and it is straightforward to check that $\hat\psi(\theta)=\frac{\sigma^2}{2}\theta^2-\mu\theta$, $\hat\Phi(0)=\frac{2\mu}{\sigma^2}$ and $\hat W^{(0)}(x)=\frac{{\rm e}^{x\hat\Phi(0)}-1}{\mu}$. We derive the result for $\alpha\neq\hat\Phi(0)$, the case when $\alpha=\hat\Phi(0)$ is similar and we omit the details. For convenience, write $k=\hat K^*$. By Theorem~\ref{c5_main_res_2} we have
\begin{eqnarray*}
\hat v(x,i)&=&-\int_x^{ki}\big(1-2(i/z)^{\hat\Phi(0)}\big)z^{\alpha-1}\frac{(z/x)^{\hat\Phi(0)}-1}{\mu}\,{\rm d}z\\
&=&\frac{1}{\mu}\Bigg[-x^{-\hat\Phi(0)}\int_x^{ki}z^{\alpha-1+\hat\Phi(0)}\,{\rm d}z+\int_x^{ki}z^{\alpha-1}\,{\rm d}z\\
&&+2(i/x)^{\hat\Phi(0)}\int_x^{ki}z^{\alpha-1}\,{\rm d}z-2i^{\hat\Phi(0)}\int_x^{ki}z^{\alpha-1-\hat\Phi(0)}\,{\rm d}z\Bigg]\\
&=&\frac{1}{\mu}\Bigg[-x^{-\hat\Phi(0)}\Bigg(\frac{(ki)^{\alpha+\hat\Phi(0)}}{\alpha+\hat\Phi(0)}-\frac{x^{\alpha+\hat\Phi(0)}}{\alpha+\hat\Phi(0)}\Bigg)+\frac{(ki)^\alpha}{\alpha}-\frac{x^\alpha}{\alpha}\\
&&+2(i/x)^{\hat\Phi(0)}\Bigg(\frac{(ki)^\alpha}{\alpha}-\frac{x^\alpha}{\alpha}\Bigg)-2i^{\hat\Phi(0)}\bigg(\frac{(ki)^{\alpha-\hat\Phi(0)}}{\alpha-\hat\Phi(0)}-\frac{x^{\alpha-\hat\Phi(0)}}{\alpha-\hat\Phi(0)}\bigg)\Bigg]\\
&=&\frac{1}{\mu}\Bigg[\frac{-x^{\alpha}}{\alpha+\hat\Phi(0)}\bigg(\bigg(\frac{ki}{x}\bigg)^{\alpha+\hat\Phi(0)}-1\bigg)+\frac{x^\alpha}{\alpha}\bigg(\bigg(\frac{ki}{x}\bigg)^\alpha-1\bigg)\\
&&+\frac{2i^{\hat\Phi(0)}x^{\alpha-\hat\Phi(0)}}{\alpha}\bigg(\bigg(\frac{ki}{x}\bigg)^\alpha-1\bigg)-\frac{2i^\alpha k^{\alpha-\hat\Phi(0)}}{\hat\Phi(0)-\alpha}\bigg(\bigg(\frac{ki}{x}\bigg)^{\hat\Phi(0)-\alpha}-1\bigg)\Bigg].
\end{eqnarray*}
Adding the second and third term gives
\begin{eqnarray*}
\hat v(x,i)&=&\frac{1}{\mu}\Bigg[x^\alpha\bigg(\bigg(\frac{ki}{x}\bigg)^\alpha-1\bigg)\bigg(\frac{1}{\alpha}+\frac{2}{\alpha}\bigg(\frac{i}{x}\bigg)^{\hat\Phi(0)}\bigg)\\
&&-\frac{x^{\alpha}}{\alpha+\hat\Phi(0)}\bigg(\bigg(\frac{ki}{x}\bigg)^{\alpha+\hat\Phi(0)}-1\bigg)-\frac{2i^\alpha k^{\alpha-\hat\Phi(0)}}{\hat\Phi(0)-\alpha}\bigg(\bigg(\frac{ki}{x}\bigg)^{\hat\Phi(0)-\alpha}-1\bigg)\Bigg].
\end{eqnarray*}
Next, let us derive the equation for $\hat K^*$. Using~\eqref{c5_scale5} and changing variables according to $u={\rm e}^z$ shows that $\hat K^*$ has to satisfy the equation
\begin{eqnarray*}
\int_1^Ku^{\alpha-1}(1-2u^{-\hat\Phi(0)})(\alpha u^{\hat\Phi(0)}-\alpha+\hat\Phi(0)u^{\hat\Phi(0)})\,{\rm d}u=0\qquad\text{on $(1,\infty)$.}
\end{eqnarray*}
Solving the integral and rearranging gives the claim.
\end{proof}

\begin{rem}
\rm
Note that in contrast to Corollary~\ref{c5_example3}, in Corollary~\ref{c5_example4} there is no condition required to ensure that $X\in\hat{\mathcal{C}}^1$, since in this case $X$ is of type~\eqref{c5_type1} and then the only requirement is that the Laplace exponent of the Lamperti transformation of $X$ exists. This is clearly the case in Corollary~\ref{c5_example4}.
\end{rem}

\begin{rem}\label{c5_example5}
\rm
If $X$ is a $d$-dimensional Bessel process with $d>2$, then $X$ is a pssMp with index of self-similarity $\alpha=2$ and of type~\eqref{c5_type1} with $\lim_{t\uparrow\infty}X_t=\infty$. It is known that its Lamperti representation is given by $\xi_t=W_t+\frac{({\rm d}-2)}{2}t$. Setting $\sigma=1$ and $\mu=\frac{d-2}{2}$ in Corollary~\ref{c5_example4}, one recovers Theorem 4 of~\cite{GloHulPes}. In particular, if $d=3$ one sees that $\hat K^*$ is the unique solution to
\begin{equation*}
K^3-4K^2+4K-1=(K-1)(K^2-3K+1)=0
\end{equation*} 
on $(1,\infty)$. Solving this equation shows that $\hat K^*=(3+\sqrt{5})/2$. The corresponding optimal stopping time can then be expressed as
\begin{equation*}
\hat\tau^*=\inf\{t\geq0:X_t\geq\hat K^*(i\wedge\underline X_t)\}=\inf\{t\geq0:(X_t-(i\wedge\underline X_t))/(i\wedge\underline X_t)\geq \varphi\},
\end{equation*}
where $\varphi:=\hat K^*-1$ is the golden ratio. This was first observed and proved in~\cite{GloHulPes}. 
\end{rem}

\begin{rem}\rm
Note that according to Theorem \ref{thetahatfinite}, $E_x(\hat\Theta)<\infty$ for all $x>0$, if and only if $\hat\psi(2) = (2^2)/2 - 2\times(d-2)/2<0$. That is to say, $\hat\Theta$ has finite mean if and only if $d>4$. This agrees with what is already known in the literature. See for example Lemma 1 of \cite{shi}.
\end{rem}

\end{document}